%% file: arxiv-main.tex
\documentclass{article}
\usepackage{arxiv}

\usepackage[utf8]{inputenc} 
\usepackage[T1]{fontenc}    
\usepackage{hyperref}       
\usepackage{url}            
\usepackage{amsfonts}       
\usepackage{nicefrac}       
\usepackage{microtype}      
\usepackage{graphicx}
\usepackage{natbib}
\usepackage{doi}

\usepackage{subcaption}
\usepackage{float} 
\usepackage{amsmath}
\usepackage{amsthm}
\usepackage{amssymb}
\usepackage{color}
\usepackage{bm}
\usepackage{algorithm} 
\usepackage{algorithmic}
\usepackage{sidecap}
\sidecaptionvpos{figure}{t}

\newtheorem{remark}{Remark}

\newcommand{\KO}{\mathcal{K}}
	
\newcommand{\R}{\mathbb{R}}
\newcommand{\C}{\mathbb{C}}
\newcommand{\tol}{\texttt{tol}}
\newcommand{\x}{\mathbf{x}}
\newcommand{\y}{\mathbf{y}}
\newcommand{\s}{\mathbf{s}}
\newcommand{\q}{\mathbf{q}}
\newcommand{\z}{\mathbf{z}}
\def\roff  {\mbox{\boldmath$\varepsilon$}}
\newcommand{\norm}[1]{\left\lVert#1\right\rVert}

\newtheorem{theorem}{Theorem}[]
\numberwithin{theorem}{section}

\newtheorem{proposition}[theorem]{Proposition}

\theoremstyle{definition}  

\newtheorem{example}[theorem]{Example}

\title{New Robust Streaming DMD with Forecasting }

\author{ 
Zlatko Drmač \\
	Department of Mathematics\\
    Faculty of Science\\
    University of Zagreb\\
	\texttt{drmac@math.hr} \\
	\And
    Ela Đimoti\\
	Department of Mathematics\\
    Faculty of Science\\
    University of Zagreb\\
	\texttt{ela.dimoti@math.hr} \\
}

\date{}

\renewcommand{\headeright}{}
\renewcommand{\undertitle}{Preprint}
\renewcommand{\shorttitle}{Streaming DMD and dynamic forecasting}

\begin{document}
\maketitle

\begin{abstract}
\input{abstract.tex}
\end{abstract}

\input{sections/introduction}
\input{sections/streaming-hemati-review}
\input{sections/one_basis.tex}
\input{sections/zhang_updates}

\section*{Acknowledgements}
This research is supported by the Croatian Science Foundation (HRZZ) Grant IP-2025-02-3733 \emph{``Data driven identification and dimension reduction of dynamical systems''}, 
and by the European Union – NextGenerationEU through the National Recovery and Resilience Plan 2021-2026,  institutional grant of University of Zagreb Faculty of Science (IK IA 1.1.3. Impact4Math).

\input{sections/appendix}
\newpage
\bibliographystyle{abbrv}
\bibliography{References}

\end{document}

%% file: abstract.tex
The Dynamic Mode Decomposition (DMD) and the more general Extended DMD (EDMD) are powerful tools for computational analysis of dynamical systems in data-driven scenarios. They are built on the theoretical foundation of the Koopman composition operator and can be considered as numerical methods for data snapshot-based extraction of spectral information of the composition operator associated with the dynamics, spectral analysis of the structure of the dynamics, and for forecasting.
In high fidelity numerical simulations, the state space is high dimensional and efficient numerical methods leverage the fact that the actual dynamics evolves on manifolds of much smaller dimension. This motivates computing low rank approximations in a streaming fashion and the DMD matrix is adaptively updated with newly received data. In this way, large number of high dimensional snapshots can be processed very efficiently. Low dimensional representation also requires fast updating for online applications. 
This paper revisits the pioneering  works of Hemati,  Williams and Rowley (Physics of Fluids, 2014),  and Zhang, Rowley, Deem and Cattafesta (SIAM Journal on Applied Dynamical Systems, 2019) on the streaming DMD and proposes
improvements in functionality (using residual bounds, Exact DMD vectors), computational efficiency (more efficient algorithm with smaller memory footprint) and numerical robustness (smaller condition numbers and better forecasting skill). 

%% file: sections/introduction.tex
\section{Introduction}
Dynamic Mode Decomposition (DMD) is a simple, yet powerful and versatile, data-driven tool for analyzing complex nonlinear dynamical systems. It was originally introduced in the context of computational fluid dynamics (CFD) by Schmid and Sesterhenn in their seminal works \cite{Schmid-Sesterhenn-DMD-2008, SCHMID_2010}.
It can be used to identify, analyze and forecast/control the dynamics $\x_{i+1}=T(\x_i)$, governed by a mapping $T: \R^m\longrightarrow \R^m$ that may not be known, but an abundance of data snapshots can be generated. 
 CFD applications motivated further theoretical and algorithmic development and extensions of the method, see 
\cite{ROWLEY_MEZIC_BAGHERI_SCHLATTER_HENNINGSON_2009}, \cite{Schmid2011}, \cite{schmid-2011-dmd-exp-fluids}, \cite{SCHMID2021243}, \cite{mezic_annual_reviews-2013}, \cite{tu-rowley-dmd-theory-appl-2014},  \cite{Williams_2015-EDMD}, \cite{schmid-2022-dmd-variants}, \cite{Jovanovic:2012wy}, \cite{jovschnicPOF14}, \cite{GIANNAKIS2020132211}. 

The key feature of the DMD is that it is entirely data driven and that the core algorithm is oblivious to the nature of the data. This makes it a multi-purpose tool with a wide palette of applications, including robotics, aeroacoustics, video processing, machine learning, and non-intrusive model reduction. For an introduction to DMD and its applications, the reader is referred to \cite{Kutz-SIAM-BOOK-2016}, \cite{Akshay2021}, \cite{brunton-budisic-kaiser-kutz-sirew-2022}. 

In a pure data-driven scenario, the data snapshots are acquired by sophisticated high tech measurements, such as e.g. the Particle Image Velocimetry (PIV) \cite{Chaugule-PIV-DMD-2023}, using fast-response pressure sensitive paint (PSP) for measuring  acoustic pressure field \cite{Ali-PSPD-DMD-2016}, or collecting detailed vehicle trajectory data through a network of synchronized digital video cameras and customized software application that transcribes the vehicle trajectory data from the video \cite{usdot_ngsim_2016}, \cite{Avilla-Mezic-2020}. In another setting, the model is known and high-fidelity numerical simulations are available and are used to study the structure of the dynamics, motivate theoretical development or e.g. to train a reduced order model. In the physics informed DMD, a priori known information on the structure of the underlying operator can be
built in the algorithm \cite{Baddoo-pidmd-2023}.

To set the stage and for the reader’s convenience,  in \S\ref{SS=DMDReview} we first review the key elements of Dynamic Mode Decomposition (DMD). Then, in \S\ref{SS=StreamingDMD-intro}, we define the specific problem addressed in this paper---adapting DMD to perpetual streams of incoming data snapshots. This introduction concludes with \S \ref{SS=Contributions}, where we list our contributions to the streaming DMD.

\subsection{DMD - a review}\label{SS=DMDReview}
Typically, the mathematical model is expressed as an autonomous system of ordinary differential equations $\dot x(t)=F(x(t))$, and in numerical simulations it is replaced by a discrete system $x_{i+1}=T(x_i)$. In other applications, due to the nature of the data, the model is discrete, such as modeling the spread of infectious diseases in order to forecast potential outbreaks \cite{mezic_etal-covid_2024}. 

Suppose that we have gathered data in the form $(f(\x_i),f(T(\x_i)))$, $i=1,2,\ldots$, where $f$ is some observable of interest -- a scalar or vector-valued function of the states. The observable $f$ can be simply the full state, $f(\x)=\x$ and the data is $(\x_i,\y_i=T(\x_{i}))$. The data may originate from a single long trajectory, or from several trajectories of different lengths, obtained by a sequence of numerical simulations or measurements of the system, using different initial conditions. If $n$ data pairs are available, they will be arranged in two 
$m\times n$ matrices $X$ and $Y$ such that 
\begin{equation}\label{eq:Y=f(T(X))}
Y(:,i)=f(T(X(:,i)))=(f\circ T)(X(:,i)),\;\;\; i=1,\ldots, n. 
\end{equation}
Relation (\ref{eq:Y=f(T(X))}) defines the (Koopman) composition operator $\KO f = f\circ T$ on the suitable space of observables, and this connection provides theoretical underpinnings for the DMD.  
For more details we refer the reader to \cite{Mezic-Spectral-MOR-2005}, \cite{Mezic-Koop-Spectrum-2020}.

To simplify the notation, we will use $\x_i=X(:,i)$, $\y_i=Y(:,i)$.
We will be mainly interested in the case $m>n$, possibly $m\gg n$, that is, both $X$ and $Y$ are tall and skinny matrices. 	
The assumption is that $X$ and $Y$ are related by a linear transformation $A$ such that $Y\approx A X$. 
This relation is in practical computation accomplished in the least squares sense, that is,  $A\in\mathrm{argmin}_{A}\|AX-Y\|_F$.  
It should be noted that in this setting ($m>n$) the least squares problem is under-determined and the solution set is a linear manifold. Namely, if $X_{\perp}$ is such that $X_{\perp}^T X=0$, then $(A+E X_{\perp}^T)X=AX$ for any $E$ of compatible dimensions. The particular choice $A=YX^\dagger$ has the unique property of having the smallest Frobenius norm among all solutions.  However, for any choice of $A$ from the solution manifold, the product $AX=Y P_{\mathcal{R}(X^T)}$ is determined uniquely.
(If $\mathrm{rank}(X)=n$, then $AX=Y$.)

\subsubsection{Task $\sharp 1$: Approximate eigenpairs}\label{SSS=DMD-task-1}
The first task in DMD is to compute the eigenvalues and eigenvectors of $A$. When doing that, 
one has to take into account the following constraints: 

(1) Whatever the choice of $A$, the provided data define only how $A$ acts in the range of $X$: 
$AXw = Y P_{\mathcal{R}(X^T)} w$, $w\in\R^n$. 

(2) The range of $X$ is not necessarily $A$--invariant, so the computed eigenvalues and eigenvectors of $A$ are only approximations, with possibly unacceptably large residuals. 

To identify an approximate eigenpair $(\lambda, Xw)$ in this setting, the residual
$r = A Xw - \lambda Xw$ should be small with some judicious choice of $w$ and $\lambda$. One way of minimizing the residual $r$ is to annihilate its component in the range of $X$, so that what it remains is in the orthogonal complement: $X^T r=0$. In other words, 
\begin{equation}\label{eq:GEVP}
X^T A X w = \lambda X^T X w.\;\;(X^T Y P_{\mathcal{R}(X^T)}w = \lambda X^T X w)
\end{equation}
This is an $n\times n$ generalized eigenvalue problem, where $X^T X$ is possibly singular (if $X$ is rank deficient) or highly ill--conditioned (numerically singular). In addition, $X$, as well as $Y$, may be contaminated by noise. To tackle the (numerical) rank  of $X$, 
let $\mathrm{rank}(X)=r_X$, and let $X = U \Sigma V^T$ be the economy--size SVD of $X$, with $r_X\times r_X$ matrix $\Sigma$ of strictly positive singular values $\sigma_1\geq\cdots\geq\sigma_{r_X}>0$. Note that $X^T=V\Sigma U^T$, and that $P_{\mathcal{R}(X^T)}=VV^T$. This means that
\begin{equation}\label{eq:AX=YPXT}
AX=YP_{\mathcal{R}(X^T)} \Longleftrightarrow A U\Sigma V^T = Y VV^T,
\end{equation}
i.e. $A U \Sigma = YV$. 
Since the columns of $U$ are an orthonormal basis for the range of $X$, the Rayleigh--Ritz method can be applied using $U$ which, instead of \eqref{eq:GEVP}, yields the eigenvalue problem 
\begin{equation}\label{eq:UTAU}
U^T A U w = \lambda w, \;\;\mbox{where}\;\;U^T A U = U^T YV\Sigma^{-1}. 
\end{equation}
Each eigenpair $(\lambda,w)$ gives an approximation, the Ritz pair $(\lambda,Uw)$. This is the essence of the
DMD method -- it is a data-driven Rayleigh--Ritz spectral extraction
from the range of $X$. In practical computation, the smallest singular values of $X$ will be computed with large relative errors. A numerically prudent course of action is to use truncated SVD, i.e. to carefully set a tolerance threshold $\tol$ and instead of $r_X$ use the $k$-truncated SVD, where
\begin{equation}\label{eq:truncation}
k = \max\{ i : \sigma_i > \tol\, \sigma_1 \}
\end{equation}
denotes the numerical rank. Using truncated SVD can be interpreted as replacing $X$ with $X+\Delta X=U_{k}\Sigma_{k} V_{k}^T$, where $U_{k}=U(:,1:k)$,
$V_{k}=V(:,1:k)$, $\Sigma_{k}=\Sigma(1:k,1:k)$, and\footnote{Here $\|X\|_2=\sigma_1$ is the matrix spectral norm.} $\|\Delta X\|_2\leq \tol \|X\|_2$. 
In fact, computation of the SVD in floating-point arithmetic already introduces backward error $\delta X$ which 
obeys an upper bound similar as $\Delta X$. 
For a detailed discussion on this topic see \cite{dmd_enhancements-qrdmd},  \cite{Drmac-DMD-TOMS-2024}.  

Hence, in practical computation, the formula (\ref{eq:UTAU}) is used with $U\leftarrow U_{k}$, $V\leftarrow V_{k}$, $\Sigma\leftarrow \Sigma_{k}$.
For each Ritz pair $(\lambda_i,\z_i=Uw_i)$, $i=1,\ldots,k$, the residual $r_i=AUw_i-\lambda Uw_i$ is in general not zero. An important addition to the DMD, introduced in \cite{dmd_enhancements-qrdmd}, is computation of the residuals within the DMD scheme: It suffices to note that 
\begin{equation}\label{eq:DMD-residuals}
r_i=A\z_i-\lambda_i \z_i = AUw_i -\lambda_i Uw_i=YV\Sigma^{-1}w_i-\lambda_i U w_i. 
\end{equation}
Then, all norms $\|r_i\|_2$ can be computed and Ritz pairs can be ordered starting with the best ones, i.e. those with smallest residuals.
For identifying coherent states, the best candidates are (at this point at least intuitively) the Ritz pairs with smallest residuals. 
This is an important part of the DMD algorithm that is, unfortunately, missing in most of the literature and software solutions. An implementation that includes the residuals has recently been included in the LAPACK library \cite{Drmac-DMD-TOMS-2024}. 
Algorithm \ref{ALG:DMD:RRR} summarizes the preceding discussion.
\begin{algorithm}
\caption{$(Z_k, \Lambda_k, r_k, [B_k], [Z_k^{(ex)}])=\mathrm{DMD}(X,Y; \tol)$}
	\label{ALG:DMD:RRR}
	\begin{algorithmic}[1]
		\REQUIRE 
		$X=(\x_1,\ldots,\x_n), Y=(\y_1,\ldots,\y_n)\in {\R}^{m\times n}$ that define a sequence of snapshots 
		pairs $(x_i,y_i)$. (Tacit assumption is that $m$ is large and that $n \ll m$.)
		Tolerance $\tol$ for the truncation (\ref{eq:truncation}).	
		\STATE $[U,\Sigma, V]=\mathrm{svd}(X, \text{"econ"})$ ; \COMMENT{\emph{$X = U \Sigma V^T$, $U\in{\R}^{m\times n}$, $V\in{\R}^{n\times n}$, $\Sigma={diag}(\sigma_i)_{i=1}^n$.}}
		\STATE Determine numerical rank $k$, using (\ref{eq:truncation}) with the threshold $\tol$.
		\STATE $U_k=U(:,1:k)$, $V_k=V(:,1:k)$, $\Sigma_k=\Sigma(1:k,1:k)$;
		\STATE ${B}_k = Y (V_k\Sigma_k^{-1})$; \COMMENT{\emph{Data driven formula for $A U_k$ [optional output].}}
		\STATE $S_k = U_k^* B_k$; \COMMENT{\emph{$S_k = U_k^* A U_k$ is the Rayleigh quotient.}}
		\STATE $[W_k, \Lambda_k] = \mathrm{eig}(S_k)$; \COMMENT{$\Lambda_k=\mathrm{diag}(\lambda_i)_{i=1}^k$; $S_k W_k(:,i)=\lambda_i W_k(:,i)$.}
		\STATE $Z_k = U_k W_k$; 
		\STATE $Z_k^{(ex)} = B_k W_k$; \COMMENT{\emph{The (unscaled) Exact DMD vectors [optional output].}}
		\STATE $r_k(i) = \|B_k W_k(:,i) - \lambda_i Z_k(:,i)\|_2$, $i=1,\ldots, k$; \COMMENT{\emph{The residuals (\ref{eq:DMD-residuals}).}}
		\ENSURE $Z_k$, $\Lambda_k$, $r_k$, $[B_k]$, $[Z_k^{(ex)}]$.
	\end{algorithmic}
\end{algorithm}

\noindent It should be noted that the matrix $YX^\dagger$ is never used in this procedure. For the definition and properties of the
Exact DMD vectors (Line 8 in Algorithm \ref{ALG:DMD:RRR}) see \cite{tu-rowley-dmd-theory-appl-2014} and \cite{Drmac-DMD-TOMS-2024}.

\subsubsection{Task $\sharp 2$: Modal analysis of the data and forecasting}\label{SSS=DMD-task2}
After identifying approximate eigenpairs, the second task is to analyze the data (modal decomposition) and develop forecasting skill. For a selection of $\ell\leq k$ Ritz pairs $(\lambda_{\varsigma_j},\z_{\varsigma_j})$, $j=1,\ldots,\ell$, the
task is to find coefficients $\alpha_1, \ldots, \alpha_\ell$ that minimize the loss function
\begin{equation}\label{eq:modal-dcomp}
L(\alpha_1,\ldots,\alpha_{\ell})=\sum_{i=1}^{n+1} w_i^2 \| \x_i - \sum_{j=1}^{\ell} \z_{\varsigma_j} \alpha_j \lambda_{\varsigma_j}^{i-1}\|_2^2 \longrightarrow \min_{\alpha_j} \;\;\;(\x_i\approx \sum_{j=1}^{\ell} \z_{\varsigma_j} \alpha_j \lambda_{\varsigma_j}^{i-1}),
\end{equation}
where $w_i\geq 0$ is a weight that determines the importance of the $i$-th snapshot. 
The structure of the dynamics is better revealed and understood if a faithful representation (\ref{eq:modal-dcomp}), called the Koopman Mode Decomposition (KMD), 
can be achieved with small $\ell$.
Then, for instance, if 
$\x_{n+1}=\y_n \approx \sum_{j=1}^{\ell} \z_{\varsigma_j} \alpha_j \lambda_{\varsigma_j}^{n}$, 
a reasonable forecast for $\y_{n+k}$ is $A^k \y_n \approx \sum_{j=1}^{\ell} \z_{\varsigma_j} \alpha_j \lambda_{\varsigma_j}^{n+k}$, $k=1, 2, \ldots$. For this application of (\ref{eq:modal-dcomp}) the weights $w_i$ should favor the most recent data snapshots.

The selection of a small number of modes that capture data snapshots can be achieved by imposing a sparsity constraint to minimization (\ref{eq:modal-dcomp}). This is accomplished in the Sparsity Promoting DMD \cite{jovschnicPOF14}, where the selection of the modes is delegated to a black-boxed sparsity-constrained optimization software. An alternative is to select modes that have small residuals as shown in \cite{Colbrook-Drmac-Horning-2025}.
For detailed discussion and another approach, see \cite{Sayadi-etal-param-DMD-2015}.

\subsection{DMD of  large dimensional perpetual data streams}\label{SS=StreamingDMD-intro}

In numerical simulations with large $m$,  memory footprint and computational complexity make the DMD analysis expensive. If $m$ is, say, in millions then accommodating many snapshots in fast memory is unfeasible, and out-of-core and low rank approximation techniques may be necessary.
This becomes even more challenging task in an online (streaming) application of the DMD, when the new data are delivered one snapshot or a block of snapshots at a time. 
Hence, in streaming scenarios, the DMD/KMD analysis (optionally with forecasting) is performed continuously in a window of dynamically expanding width.

The applications include onboard ocean and acoustic forecasting of temparature, salinity, velocity and transmission loss fields on unmanned autonomous ocean platforms \cite{Ryu-etal-OnboardDMD-2022}, model predictive control of fluid flow \cite{Li_Deng_2024} or e.g. the UAV's \cite{Quadcoper-dmd-2023}, construction of reduced order model in flow simulations \cite{2023arXiv231118715S}, \cite{HUHN2023111852},  control of wind farms \cite{windfarms-2022}, or e.g. real-time motion detection \cite{Mignacca-videoDMD-2025}.

Clearly, computing a new DMD from scratch after adding one more snapshots is not optimal and it may not be feasible in some real-time applications. This problem was first addressed by Hemati, Williams and Rowley \cite{hemati_williams_rowley_2014}, who proposed low rank approximations of the data, based on a rank revealing  decomposition of data matrices. Such an approach is most effective in cases of dynamics described in large state space dimension $m$, but evolving on low dimensional manifolds. Zhang, Rowley, Deem and Cattafesta \cite{zhang_DMD} developed this further and described fast updating scheme for the case of large number of low dimensional data snapshots (such that $X$ is of full row rank).
The key idea of this approach is to use the Sherman-Morrison-Woodbury formula that is used for fast update of the pseudoinverse $X^\dagger=X^T(XX^T)^{-1}$.
Some approaches to the streaming DMD use techniques for updating the SVD of $X$, see \cite{Matsumoto-fly-dmd-2017}, \cite{Nedzhibov-SVDupdate-DMD-2023}. Since our main interest are the cases of large dimensional data, the SVD based updating is less attractive and we focus on \cite{hemati_williams_rowley_2014}, \cite{zhang_DMD}.

\subsection{Organization of the paper and our contributions}\label{SS=Contributions}
The streaming DMD from  \cite{hemati_williams_rowley_2014} is reviewed in \S \ref{SS=Hemati-review}, with detailed discussions of some subtle numerical details. In \S \ref{S=Modifications}, we first point out that the condition number that determines the numerical accuracy of the methods in 
\cite{hemati_williams_rowley_2014}, \cite{zhang_DMD} is $\kappa_2(XX^T)=\kappa_2(X)^2$. 
Then, in \S \ref{SS=SquaredCond}, we present two major modifications of the streaming DMD: \emph{(i)}  To avoid squaring the condition number when updating $X^\dagger$, we introduce new methods based on
fast updates of certain orthogonal factorization of $X$ (\S \ref{SS=AvoidkappaG}) and on updated Cholesky factor of $XX^T$ (\S \ref{SS:UseUpdatedCholFactor}). Similarly as in \cite{hemati_williams_rowley_2014}, we can enforce maximal rank (\S \ref{SSS=TQ-Rank-Enforce}).
Numerical example in \S \ref{SSS:example-vorticity} illustrates the superiority of the new approach. \emph{(ii)} It is shown how to compute the Exact DMD vectors (\S \ref{SS=ExactDMD}) and use them for efficient KMD in the framework of \cite{hemati_williams_rowley_2014}. This section concludes with a discussion (\S \ref{SS=DiscussFurtherImprove}) that motivates further improvements. 

The second part of this work (\S \ref{S=Hemati_ONE_BASIS}) presents a new streaming DMD algorithm based on \cite{dmd_enhancements-qrdmd}, \cite{Drmac-DMD-TOMS-2024}. Its main difference from \cite{hemati_williams_rowley_2014} is that it uses a single orthonormal basis for both $X$ and $Y$ (\S \ref{SS=LowRankSnapshotsBasisChange}). However, it has the same functionalities, including residual bounds and using the Exact DMD vectors (\S \ref{SS=OneBasisExactDMDResiduals}).
The updating, outlined in \S \ref{SS:hemati_one_basis_updating},
is based on orthogonal decomposition from \S \ref{SS=AvoidkappaG}. The superior numerical robustness of the method is illustrated in \S \ref{S=SingleBasisForecasting} using the problem of forecasting.
In addition to this enhanced robustness, the one-basis approach reduces memory footprint and the \emph{flop} count, and allows for more efficient KMD, selection of Ritz pairs with small residuals, and efficient application of other variations of the DMD (sparsity promoting DMD \cite{jovschnicPOF14}, forward-backward DMD \cite{Dawson2016}, Optimal DMD \cite{wynn_pearson_ganapathisubramani_goulart_2013}, Koopman-Schur DMD \cite{drmac-koopman-schur}). Section \ref{S=Zhang-New} reviews the updating strategy of \cite{zhang_DMD}, and \S \ref{S6=SMforNgtM} provides explicit formulas for using \cite{zhang_DMD}
in the reduced low rank computation in \cite{hemati_williams_rowley_2014}. Then, we propose two alternatives to the numerically sensitive Sherman-Morrison updating of the inverse of $XX^T$: implicit updating of the Cholesky factor in \S \ref{SS=NewZhang-CholeskyUpdate} and using the TQ factorization in  \S \ref{SS=NewZhang-LQUpdate}. Numerical examples
confirm theoretical predictions that the new proposed approaches in numerical accuracy outperform existing methods. Final remarks and plans for further development are given in \S \ref{S=Conclude}.

%% file: sections/streaming-hemati-review.tex
\section{Streaming DMD}\label{SS=Hemati-review}

The key idea of \cite{hemati_williams_rowley_2014} is to keep $X$ and $Y$ in  factored rank--revealing factorizations 
\begin{equation}\label{eq:RRF-XY}
X=Q_x \widetilde X, \;\; Y=Q_y\widetilde Y,
\end{equation}
where $Q_x\in\R^{m\times r_x}$, $Q_y\in\R^{m\times r_y}$ are orthonormal, and $\widetilde X\in\R^{r_x\times n}$, $\widetilde Y\in\R^{r_y\times n}$ are full row rank matrices with $r_x=\mathrm{rank}(X)$, $r_y=\mathrm{rank}(Y)$. Using this notation, (\ref{eq:AX=YPXT}) becomes 
\begin{equation}\label{eq:AQxtildeX}
A Q_x\widetilde X = Q_y \widetilde{Y} P_{\mathcal{R}(X^T)}= Q_y \widetilde{Y}\widetilde X^T \widetilde{X}^{T\dagger}, 
\end{equation}
where we have used that  the orthogonal projector onto the range of $X^T$ equals
$P_{\mathcal{R}(X^T)} = \widetilde X^T \widetilde{X}^{T\dagger}$. Multiplying equation (\ref{eq:AQxtildeX}) from the right with $\widetilde X^T$ yields 
\begin{equation}\label{eq:AQx}
AQ_x \widetilde X\widetilde X^T = Q_y \widetilde Y\widetilde X^T,\;\;\mbox{i.e.}\;\; A Q_x = Q_y\widetilde Y \widetilde X^T (\widetilde X\widetilde X^T)^{-1}.
\end{equation}
The Rayleigh quotient reads, instead of (\ref{eq:UTAU}),
\begin{equation}\label{eq:QxTAQX}
Q_x^T AQ_x = (Q_x^T Q_y)(\widetilde Y \widetilde X^T)(\widetilde X\widetilde X^T)^{-1}.
\end{equation}
The Ritz vectors are obtained by lifting the eigenvectors $w_i$ of $Q_x^T AQ_x$ by $Q_x w_i$. 
To assimilate the new data $(\x_{new},\y_{new})$, \cite{hemati_williams_rowley_2014} first updates the factorizations (\ref{eq:RRF-XY}), and then the ingredients in the first formula in (\ref{eq:QxTAQX}). Note that the dimensions of all three factors are determined by the ranks $r_x$, $r_y$ and the other two dimensions $m$, $n$ are inner dimensions in the cross products. 

\noindent The key variables in the algorithm, in addition to the bases $Q_x$, $Q_y$ are the following cross-products  
\begin{equation}\label{eq-cross-products}
Q_{x,y} = Q_x^T Q_y,\;\; G_{y,x} = \widetilde Y \widetilde X^T,\;\; G_x = \widetilde X\widetilde X^T,\;\; G_y = \widetilde Y \widetilde Y^T .
\end{equation}
\subsection{Adding new data}\label{SS=AddNewData}
Suppose a new data pair $(\x_{new},\y_{new})$ becomes available. The goal is to update the factorizations (\ref{eq:RRF-XY}) and the Rayleigh quotient (\ref{eq:QxTAQX}). Consider $X_{new}$. The standard procedure for the Gram-Schmidt orthogonalization is as follows:
\begin{eqnarray}
\!\!\! X_{new} &=& \begin{pmatrix}Q_x\widetilde X & \x_{new}\end{pmatrix} = \begin{pmatrix} Q_x & \x_{new}-Q_x Q_x^T \x_{new}\end{pmatrix}\begin{pmatrix} \widetilde X & Q_x^T \x_{new}\cr 0 & 1 \end{pmatrix} \nonumber \\
&=& \begin{pmatrix} Q_x & q_{new} \end{pmatrix} \begin{pmatrix} \widetilde X & g_x \cr 0 & \gamma_x\end{pmatrix}=Q_{x,new} \widetilde X_{new},\;\;\; g_x=Q_x^T \x_{new}, \;\; \gamma_x=\|\x_{new} - Q_x g_x\|_2 , \label{eq:Xnew=QX} 
\end{eqnarray}
where the construction of the new basis vector $q_{new}$ depends on $\gamma_x$ as follows:
\begin{itemize}
\item[(1)] \framebox{$\gamma_x\neq 0$}. In this case $q_{x,new} = (\x_{new} - Q_x g_x)/\gamma_x$, and 
\begin{equation}\label{eq:tildeXnew}
Q_{x,new}=\begin{pmatrix} Q_x & q_{x,new}\end{pmatrix}, \;\;\; \widetilde X_{new}=\begin{pmatrix} \widetilde X & g_x \cr 0 & \gamma_x\end{pmatrix};
\end{equation}
\begin{equation}\label{eq:XmewXnewT-1}
\widetilde X_{new}\widetilde X_{new}^T = \begin{pmatrix} \widetilde X\widetilde X^T +g_x g_x^T &  g_x \gamma_x\cr \gamma_x g_x^T & \gamma_x^2\end{pmatrix} = \begin{pmatrix} \widetilde X\widetilde X^T & 0 \cr 0 & 0\end{pmatrix} + \begin{pmatrix} g_x \cr \gamma_x\end{pmatrix}
\begin{pmatrix} g_x^T & \gamma_x\end{pmatrix} .
\end{equation}

\item[(2)] \framebox{$\gamma_x=0$} In this case $Q_{x,new}=Q_x$, $\widetilde X_{new} = \begin{pmatrix} \widetilde X & g_x\end{pmatrix}$, and
\begin{equation}\label{eq:XmewXnewT-2}
\widetilde X_{new}\widetilde X_{new}^T = \widetilde X\widetilde X^T + g_xg_x^T.
\end{equation}
\end{itemize}

\noindent $Q_{y,new}$ and $\widetilde Y_{new}$ are computed in the same way, and $Q_{x,y}^{(new)}$ can be computed as  (e.g. if $\gamma_x \neq 0, \gamma_y \neq 0$)
\begin{equation}\label{eq:QXTQY-update}
Q_{x,new}^T Q_{y,new} = \begin{pmatrix} Q_x^T \cr  q_{x,new}^T\end{pmatrix}\begin{pmatrix} Q_y & q_{y,new}\end{pmatrix} = \begin{pmatrix} Q_x^T Q_y & Q_{x}^T q_{y,new} \cr q_{x,new}^T Q_y & q_{x,new}^T q_{y,new}\end{pmatrix} .
\end{equation}
\noindent An important detail is that the updated matrices $\widetilde X_{new}$, $\widetilde Y_{new}$ are not explicitly computed. Instead, the algorithm computes the Rayleigh quotient using the formula (\ref{eq:QxTAQX}) and keeps the updated products $\widetilde X_{new}\widetilde X_{new}^T$ (using (\ref{eq:XmewXnewT-1}), (\ref{eq:XmewXnewT-2})), and analogously $\widetilde Y_{new}\widetilde Y_{new}^T$ and $\widetilde Y_{new}\widetilde X_{new}^T$, whose dimensions depend on the numerical ranks (column dimensions of $Q_{x,new}$ and $Q_{y,new}$). This is advantageous in the case of large number of high-dimensional data snapshots and small numerical ranks.

\subsubsection{Reorthogonalization}\label{SSS:GS-reorth-strategy}

In finite precision computation, the test $\gamma_x \neq 0$ is implemented using a caller-provided tolerance $\tol{}_1$, i.e. if $\gamma_x < \tol{}_1 \|\x_{new}\|_2$, then $Q_{x,new}=Q_x$ (case (2) applies). 
There is a subtlety here.
The Gram-Schmidt procedure in floating-point arithmetic may fail to compute numerically orthogonal vectors, see e.g. \cite{Gram-schmidt-numerics}. A well-known remedy is reorthogonalization that is indicated when $\gamma_x$ is small relative to $\|\x_{new}\|_2$. ($\gamma_x/\|\x_{new}\|_2$ is the sine of the angle between $\x_{new}$ and $\mathrm{range}(Q_x)$.) But in the context of low rank representation of the data, small $\gamma_x$ means that $\x_{new}$ can be approximated from the range of $Q_x$ and that adding new vector to the orthonormal basis is not necessary. As a result,  instead of reorthogonalization, $\x_{new}$ is replaced by its projection onto $\mathrm{range}(Q_x)$. In other words, we do not insist in generating new direction, orthogonal to $\mathrm{range}(Q_x)$.

This opens the question of the choice of the threshold $\tol{}_1$. To that end, note that $\gamma_x$ is the norm of the minimal change $\Delta \x_{new}$ such that $\x_{new}+\Delta\x_{new}\in\mathrm{range}(Q_x)$. Assume that the data snapshots contain an initial uncertainty such that $\x_{new}$ contains noise $\delta\x_{new}$ such that $\|\delta \x_{new}\|_2\leq \eta \|\x_{new}\|_2$ for some $\eta\in [0,1)$. 
Hence, it is reasonable to set $\tol{}_1=\max\{\eta, m\roff\}$, where $\roff$ is the roundoff unit of the working precision.

The reorthogonalization works as follows. Let in (\ref{eq:Xnew=QX}) $\x^\prime=computed(\x_{new}-Q_x Q_x^T \x_{new})$ denote the computed vector, which is not necessarily numerically orthogonal to the columns of $Q_x$. Then
\begin{equation*}
\begin{pmatrix}
Q_x & \x^\prime
\end{pmatrix} = \begin{pmatrix}
Q_x & \x^\prime -Q_xQ_x^T \x^\prime 
\end{pmatrix} \begin{pmatrix}
I & Q_x^T \x^\prime \cr \mathbf{0} & 1
\end{pmatrix} 
= \begin{pmatrix}
Q_x & q_{x,new}^{\prime}
\end{pmatrix} \begin{pmatrix}
I & Q_x^T \x^\prime \cr \mathbf{0} & \gamma_x^\prime
\end{pmatrix},\;\;\gamma_x^\prime=\|\x^\prime-Q_xQ_x^T \x^\prime\|_2 ,
\end{equation*}
and, with $g_x^\prime=Q_x^T \x^\prime$,
\begin{eqnarray*}
X_{new} &=&  \begin{pmatrix}
Q_x & q_{x,new}^{\prime}
\end{pmatrix}   \begin{pmatrix}
I & g_x^\prime \cr \mathbf{0} & \gamma_x^\prime
\end{pmatrix} \begin{pmatrix} \widetilde X & g_x\cr \mathbf{0} & 1 \end{pmatrix} 
= \begin{pmatrix}
Q_x & q_{x,new}^{\prime}
\end{pmatrix} \begin{pmatrix} \widetilde X & g_x+ g_x^\prime\cr \mathbf{0} & \gamma_x^\prime \end{pmatrix}.
\end{eqnarray*}
In practice, reorthogonalization can be set as default, or (to avoid unnecessary computation) it can be switched on only when necessary, based on the comparison of the ratio $\gamma_x/\|\x_{new}\|_2$ with a suitably set threshold value $\tol{}_2$ (e.g. $\tol{}_2=0.1$). 
By the Kahan--Parlett's ``twice is enough'' theorem, it suffices to apply the reorthogonalization once, so that the Gram--Schmidt step is applied twice.\footnote{In \cite{hemati_williams_rowley_2014}, the function \texttt{update} uses $5$ steps by default.} The threshold value $\tol{}_2$
can be tuned to set an acceptable level of numerical orthogonality. The reorthogonalization can also be simply set as default, but in the case of large dimension $m$ applying $Q$ even when it is not necessary incurs costly data movement and flops.

\begin{algorithm}[H]
    \caption{\label{ALG:GS-Reorthog} $[\q,g, \gamma]$ = \texttt{GS\_update}($Q, \x, \tol{}_1, \tol{}_2$ )} 
\begin{algorithmic}[1]
\REQUIRE Orthonormal $Q\in\R^{m\times k}$; $\x\in\R^m$; tolerance thresholds $\tol{}_1, \tol{}_2$. 
\ENSURE $\q$, $g$, $\gamma$
\STATE $g=Q^T \x$; $\q = \x - Q g$; $\gamma=\|\q\|_2$;
\IF{$\gamma > \tol{}_1 \|\x\|_2$}
\IF{$\gamma > \tol{}_2 \|\x\|_2$} 
\STATE $\q=\q / \gamma$; 
\ELSE 
\STATE $g^\prime=Q^T \q$; \ $\q = \q - Q g^\prime$; \ $g=g + g^{\prime}$; \ $\gamma=\|\q\|_2$;  \ $\q=\q / \gamma$;  
\ENDIF
\ELSE
\STATE $\gamma=0$;  \COMMENT{$\x$ brings no new direction}
\ENDIF 
\end{algorithmic}
\end{algorithm}

\begin{remark}\label{REM:pinv-svd-eig}
{\em
The numerical rank of each $\widetilde X$ is tested when computing the Rayleigh quotient (\ref{eq:QxTAQX}). The software in \cite{hemati_williams_rowley_2014}\footnote{Matlab function \texttt{compute\_modes} in the class \texttt{StreamingDMD}.} uses Matlab's function \texttt{pinv(.)} to explicitly compute the pseudoinverse of $G_x$, where 
$G_x=\widetilde X\widetilde X^T$ is updated using the formulas
(\ref{eq:XmewXnewT-1}), (\ref{eq:XmewXnewT-2}). Recall that $\texttt{pinv}(G_x)$ computes the SVD
$G_x = U_g\Sigma_g V_g^T$, and discards the singular values that are less or equal to\footnote{In Matlab notation, this threshold reads $\max(\texttt{size}(G_x))*\texttt{eps}(\texttt{norm}(G_x))$.}  $\varepsilon r_x \|G_x\|_2$, where $\varepsilon$ is the round-off unit. This default threshold value can be changed into caller specified value \texttt{tol} by an additional input parameter, $\texttt{pinv}(G_x,\texttt{tol})$, which is important in the case of noisy data. It should be noted that the \texttt{svd(.)} and \texttt{pinv(.)}
functions assume non-symmetric matrix, and that it would be more efficient to compute the SVD as the spectral decomposition of $G_x$ using the \texttt{eig(.)} function, then truncate small singular values\footnote{For symmetric positive semidefinite matrices, the eigenvalues and the singular values are the same set; the spectral decomposition is the SVD.} using a suitable threshold.
Since in each step $G_x$ changed by a rank-one correction, fast updates can be used. 
}
\end{remark}

\subsubsection{Low rank constraints}\label{SS=Hemati-low_rank_constraints}
Suppose that, given the dimensions of the problem and the operational constraints, there is an upper limit $\widehat{r}$ for the number of columns in $Q_x$. In relation to this, \cite{hemati_williams_rowley_2014} proposes additional steps to control the numerical ranks of $\widetilde X$ and $\widetilde Y$. We now describe a variation of the procedure from \cite{hemati_williams_rowley_2014}. 

Let the number of columns of the new updated $Q_{x,new}$ reach the maximal dimension $\widehat{r}$. Consider the factorization (\ref{eq:Xnew=QX}) of $X_{new}$, and let $\widetilde X_{new}=\Omega\Sigma \Xi^T$ be the SVD. 
To make the next step feasible (under the constraint of maximal rank $\widehat{r}$), 
it suffices to replace $\widetilde{X}_{new}$ with 
$$
\widetilde X_{new}\prime=\Omega{(:,1:\widehat r-1)}\Sigma(1:\widehat r-1,1:\widehat r-1) \Xi(:,1:\widehat r-1)^T .
$$
This strategy is implemented in the software accompanying \cite{hemati_williams_rowley_2014}. By the Eckart-Young-Mirsky theorem, this is the best rank--$(\widehat r-1)$ approximation of $\widetilde X_{new}$, and the maximal rank constraints is enforced with minimal damage to the data. 

On the other hand,
the SVD of $\widetilde X_{new}$ may reveal that the numerical rank is even smaller in the sense that for some index $\rho_x < \widehat r-1$ and acceptably small tolerance $\tol{}_3$, 
$\sigma_{\rho_x+1}(\widetilde X_{new}) < \tol{}_3 \sigma_1(\widetilde X_{new})$. This yields a rank--$\rho_x$ approximation 
$
\widetilde X_{new}\approx \Omega(:,1:\rho_x)\Sigma(1:\rho_x,1:\rho_x) \Xi(:,1:\rho_x)^T .
$
Since $X_{new} = (Q_{x,new}\Omega)\Sigma \Xi^T$ is the SVD, we have optimal low rank approximation
\begin{equation*}
X_{new} \approx Q_{x,new} \Omega(:,1:\rho_x)\Sigma(1:\rho_x,1:\rho_x) \Xi(:,1:\rho_x)^T 
= \widehat Q_{x,new} \widehat X_{new},\;\;\widehat Q_{x,new} = Q_{x,new} \Omega(:,1:\rho_x),\; 
\end{equation*} 
and $\widehat X_{new} = \Sigma(1:\rho_x,1:\rho_x) \Xi(:,1:\rho_x)^T=\Omega(:,1:\rho_x)^T\widetilde X_{new}$ is used through the cross product
$\widehat X_{new}\widehat X_{new}^T = \Sigma(1:\rho_x,1:\rho_x)^2$. This means that the matrix $\Xi$ of the right singular
vectors is not needed and that it suffices to compute the spectral decomposition 
$\widetilde X_{new}\widetilde X_{new}^T = \Omega \Lambda \Omega^T$ with decreasing eigenvalues along the diagonal of $\Lambda=\Sigma^2$. 
Matrix $G_x$ is then changed to $\Lambda(1:\rho_x,1:\rho_x)$.
This formulation is in accordance with the structure of the algorithm that keeps and updates only the cross--products 
(\ref{eq:XmewXnewT-2}). The cross product $G_{y,x}=\widetilde Y_{new}^T\widetilde X_{new}$ is changed to $\widetilde Y_{new}^T\widehat X_{new}$ implicitly by updating $G_{y,x}$ into $G_{y,x}\Omega(:,1:\rho_x)$. Similarly, $Q_{x,y}=Q_{x,new}^TQ_{y,new}$ is changed into 
$\Omega(:,1:\rho_x)^T Q_{x,y}$.

The value $\rho_x$ is determined using the caller's supplied tolerance $\tol{}_3$, so the rank will be reduced only if it can be done within the prescribed tolerance. If the memory space is critical, the rank reduction can be simply enforced.  It should be noted that enforcing the hard-coded maximal rank may introduce unacceptably large error -- one option is to override the constraint and/or issue a warning message. Further, if an estimate of the level of noise in the data is available,\footnote{See \S \ref{SSS:GS-reorth-strategy}.} it can be used to reduce the rank whenever possible,  as described above.

\begin{remark}\label{REM-ry-constraint}
{\em
The formula (\ref{eq:QXTQY-update}) efficiently updates the first factor in (\ref{eq:QxTAQX}) after adding $(\x_{new},\y_{new})$. 
In the case of reducing the basis $Q_y$ the update is analogous, using the rank revealing spectral decomposition  
$
\widetilde Y_{new} \widetilde Y_{new}^T \approx \Omega_Y(:,1:\rho_y) \Sigma_Y(1:\rho_y,1:\rho_y)^2 \Omega_Y(:,1:\rho_y)^T. 
$
The first factor in \eqref{eq:QxTAQX} is computed as
$\widehat Q_{x,new}^T \widehat Q_{y,new} = \Omega_X^T \left( Q_{x,new}^T Q_{y,new} \right) \Omega_Y$,
where the inner factor is updated using $\eqref{eq:QXTQY-update}$.  
If the maximum rank $\widehat r$ is not imposed, it is not necessary to compute and update the matrix $G_y$.
}
\end{remark}
\vspace{-1mm}
\noindent The updating procedure is summarized in Algorithm \ref{ALG:Hemati-DMD}. The details of enforcing the maximal rank
(as described above) that correspond to Line 14 are omitted for the sake of brevity.
\begin{algorithm}[H]
\caption{\label{ALG:Hemati-DMD} 
$[Q_x, Q_y, Q_{x,y}, G_{y,x}, G_x, G_y]$ = \\ 
\texttt{Addxy\_compress}($Q_x, Q_y, Q_{x,y}, G_{y,x}, G_x, G_y, \x, \y , \widehat r, \tol{}_1, \tol{}_2, \tol{}_3$ )}
    \begin{algorithmic}[1]
    \REQUIRE orthonormal $Q_x, Q_y$ from (\ref{eq:RRF-XY}); cross products $Q_{x,y},G_{y,x}, G_x, G_y$ from (\ref{eq-cross-products}); new data  $\x, \y$; maximal dimension $\widehat r$; tolerances $\tol{}_1, \tol{}_2, \tol{}_3$ 
    \ENSURE  Updated matrices $Q_x, Q_y, Q_{x,y}, G_{y,x}, G_x, G_y$ 
    \STATE $[\q_{x},g_x, \gamma_x]$ = \texttt{GS\_update}($Q_x, \x, \tol{}_1, \tol{}_2$);
    \STATE $[\q_{y},g_y, \gamma_y]$ = \texttt{GS\_update}($Q_y, \y, \tol{}_1, \tol{}_2$);
    \STATE $G_x = G_x + g_x g_x^T$; $G_y=G_y + g_y g_y^T$;  $G_{y,x}=G_{y,x} + g_y g_x^T$;
    \IF{$\gamma_x > 0$ and $\gamma_y > 0$}
    \STATE $Q_{x,y} = \left(\begin{smallmatrix}
        Q_{x,y} & Q_x^T \q_{y} \\ \q_{x}^T Q_y & \q_{x}^T \q_{y}
    \end{smallmatrix}\right)$; $Q_{x}=(Q_x \; \q_{x})$; $Q_{y}=(Q_y \; \q_{y})$;
    \STATE $G_{y,x} = \left(\begin{smallmatrix}
            G_{y,x} & g_y\gamma_x \cr
            \gamma_y g_x^T & \gamma_x\gamma_y
        \end{smallmatrix}\right)$;  $G_x = \left(\begin{smallmatrix}
        G_x  & g_x\gamma_x \cr \gamma_x g_x^T & \gamma_x^2
    \end{smallmatrix}\right)$;   $G_y = \left(\begin{smallmatrix}
        G_y & g_y\gamma_y \cr \gamma_y g_y^T & \gamma_y^2
    \end{smallmatrix}\right)$;
    \ENDIF
    \IF{$\gamma_x > 0$ and $\gamma_y = 0$}
    \STATE  $Q_{x,y} = \left(\begin{smallmatrix}
        Q_{x,y}  \\ \q_{x}^T Q_y 
    \end{smallmatrix}\right)$; $Q_{x}=(Q_x \; \q_{x})$;  
    $G_{y,x} = \left(\begin{smallmatrix}
        G_{y,x}  & g_y\gamma_x
    \end{smallmatrix}\right)$; $G_x = \left(\begin{smallmatrix}
        G_x  & g_x\gamma_x \\ \gamma_x g_x^T & \gamma_x^2
    \end{smallmatrix}\right)$;
    \ENDIF
    \IF{$\gamma_x = 0$ and $\gamma_y > 0$}
    \STATE  
    $Q_{x,y} = \left(\begin{smallmatrix}
        Q_{x,y} & Q_x^T \q_{y} 
    \end{smallmatrix}\right)$; $Q_{y}=(Q_y \; \q_{y})$; 
     $G_{y,x} = \left(\begin{smallmatrix}
        G_{y,x} \\ \gamma_y g_x^T
    \end{smallmatrix}\right)$; $G_{y} = \left(\begin{smallmatrix}
        G_{y}  & g_y\gamma_y \\ \gamma_y g_y^T & \gamma_y^2
    \end{smallmatrix}\right)$; 
    \ENDIF
    \STATE \textit{If $Q_x$ or $Q_y$ have reached $\widehat r$ columns, do the procedure described in \S\ref{SS=Hemati-low_rank_constraints}}
    \end{algorithmic}
\end{algorithm}
\subsection{The compressed streaming DMD algorithm}
\noindent Equipped with Algorithm  \ref{ALG:Hemati-DMD}, the compressed streaming DMD of the new data, after receiving $\x, \y$, can be compactly written as Algorithm \ref{ALG:Hemati-DMD-2}.
\begin{algorithm}[H]
\caption{\label{ALG:Hemati-DMD-2} $[Z,\Lambda]$ = \\ \texttt{DMDstream\_compress}($Q_x,Q_y,Q_{x,y},G_{y,x},G_x, G_y,\x,\y,\widehat r, \tol{}_1,\tol{}_2,\tol{}_3$)}
  \begin{algorithmic}[1]
\REQUIRE orthonormal $Q_x, Q_y$ from (\ref{eq:RRF-XY}); cross products $Q_{x,y},G_{y,x}, G_x, G_y$ from (\ref{eq-cross-products}); new data  $\x, \y$; maximal dimension $\widehat r$; tolerances $\tol{}_1, \tol{}_2, \tol{}_3$ 
\ENSURE  Ritz pairs $(Z,\Lambda)$ 
\STATE $[Q_x, Q_y, Q_{x,y}, G_{y,x}, G_x, G_y] =$ \\ $ \texttt{Addxy\_compress}(Q_x, Q_y, Q_{x,y}, G_{y,x}, G_x, G_y, \x, \y , \widehat r_x, \widehat r_y, \tol{}_1, \tol{}_2, \tol{}_3)$;
\STATE $\widetilde A = Q_{x,y}G_{y,x}G_x^{-1}$;
\STATE $[W,\Lambda]=\texttt{eig}(\widetilde{A})$;
\STATE $Z=Q_x W$.
\end{algorithmic}
\end{algorithm}

\begin{remark}\label{REM-OnGxGy}
{\em
Since $G_x$ and $G_y$ are symmetric, it suffices to keep and update their upper triangles (diagonals included).
Efficient and robust eigensolvers such as the LAPACK's functions \texttt{xSYEV, xSYEVD} on input require only the upper
or the lower triangle, and the computed eigenvectors (when requested) overwrite the initial array, so that the computations involving  $G_x$ and $G_y$ can be optimized for memory. Note that $G_y$ is computed only for the purpose of reducing the rank.
}
\end{remark}

\begin{example}\label{EX:Cylinder-kappaG}
To illustrate the key numerical parameters and their behavior (low rank representation, orthogonality of the bases, the  residuals (\ref{eq:DMD-residuals}), condition numbers) in the streaming DMD, we use the numerical simulation of the two--dimensional cylinder wake data, available in the supplementary materials \cite{DMD-book-supplement} to the book \cite[\S 2.3]{Kutz-SIAM-BOOK-2016}. 
The observable is the vorticity, and the results are shown in Figure \ref{fig:CondXXT-cylinder-1}. The low rank constraints are $\widehat r = 30$ (first row) and $\widehat r=100$ (second row).

\begin{figure}[ht]
	\includegraphics[width=0.34\linewidth]{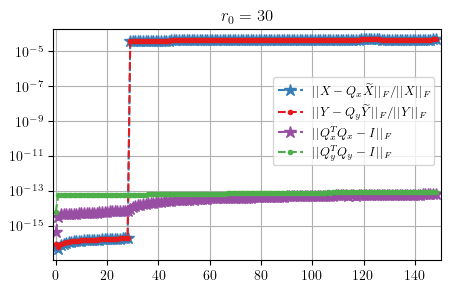}
	\includegraphics[width=0.34\linewidth]{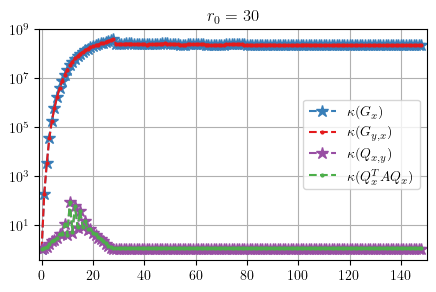}
    \includegraphics[width=0.28\linewidth]{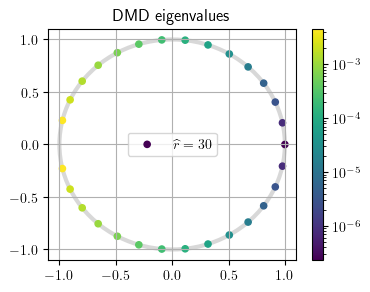} 
    \includegraphics[width=0.34\linewidth]{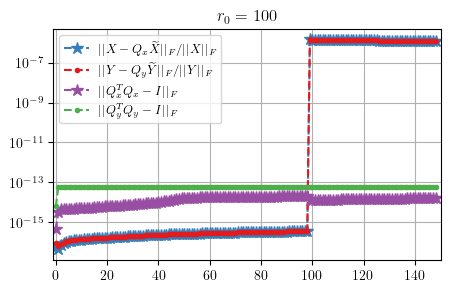}
	\includegraphics[width=0.34\linewidth]{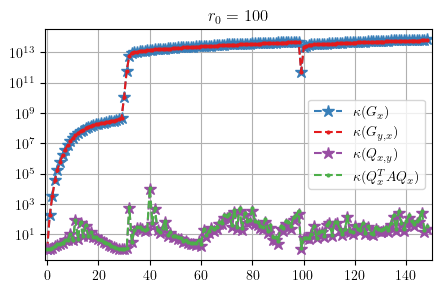}
    \includegraphics[width=0.28\linewidth]{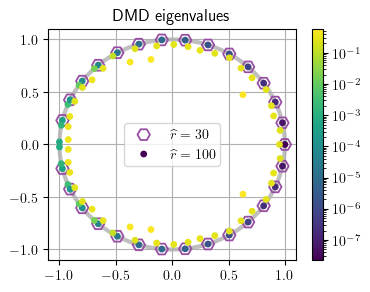}
	\caption{\label{fig:CondXXT-cylinder-1} \emph{Left panels:}
    Test of the low rank representation (\ref{eq:RRF-XY}) and the orthogonality of the bases $Q_x$, $Q_y$. Note how the low rank constraints (\S \ref{SS=Hemati-low_rank_constraints}) here set to $30$ (first row) and $100$ (second row)) impact the accuracy of the representation.
		\emph{Middle panels:} The condition numbers of the key matrices. 
          \emph{Right panels:} The DMD eigenvalues computed after receiving the last snapshot. The colors encode the corresponding residuals. In the second row, the eigenvalues from the first row are shown as purple hexagons. (The system is measure preserving and the underlying Koopman operator is unitary. The true eigenvalues are on the unit circle.)
	} 
\end{figure}
\end{example}

\section{Modifications of the compressed streaming  DMD}\label{S=Modifications}
The compressed streaming DMD performed well on real world data \cite{hemati_williams_rowley_2014}, \cite{Yang-Zhang-strDMD-2022}. 
In this section, we show that, in addition to the details discussed in \S \ref{SS=Hemati-review}, the method can be further improved in terms of numerical robustness and functionality. 

The relations (\ref{eq:AQx}), (\ref{eq:QxTAQX}) actually express $A Q_x$ and $Q_x^T AQ_x$ as
\begin{eqnarray}
A Q_x &=& Q_y\widetilde Y \widetilde X^\dagger = Q_y\widetilde Y \widetilde X^T (\widetilde X\widetilde X^T)^{-1}  , \label{eq:AQx-2}\\
Q_x^T AQ_x &=& Q_x^T Q_y\widetilde Y \widetilde X^\dagger = (Q_x^T Q_y)(\widetilde Y \widetilde X^T)(\widetilde X\widetilde X^T)^{-1} = Q_{x,y}G_{y,x}G_x^{-1} ,\label{eq:QxTAQX-2}
\end{eqnarray}
and the formula (\ref{eq:QxTAQX-2}) is used in \cite{hemati_williams_rowley_2014} as a dimension-reduction technique in cases of large number of data snapshots that evolve on a low dimensional manifold. The column dimensions of $Q_x$ and $Q_y$ are much smaller than the number $n$ of data pairs $(\x,\y)$, and the matrices $\widetilde X$, $\widetilde Y$ are not computed.  
A drawback of this is, as illustrated in Example \ref{EX:Cylinder-kappaG}, that the condition number of $\widetilde X$ is squared: $\kappa_2(G_x)=\kappa_2(\widetilde X\widetilde X^T)=\kappa_2(\widetilde X)^2$.
\noindent Hence, we have to resolve the tradeoff between the numerical stability and the efficiency in the case of large number $m$ of data snapshots of low-dimensional dynamics. This is accomplished in \S \ref{SS=SquaredCond}.

In \S \ref{SS=ExactDMD}, we show how to further enhance the method by computing the Exact DMD vectors (in the framework developed in \S \ref{SS=SquaredCond}), and using them for the the KMD. We conclude this section with the discussion in \S \ref{SS=DiscussFurtherImprove}, that motivates further development in \S \ref{S=Hemati_ONE_BASIS}.
\subsection{Avoiding squared condition number}\label{SS=SquaredCond}
The condition number of $G_x$  may become critical in large dimensional applications, using GPU hardware with lower working precision, e.g. 32 instead of 64 bit arithmetic, or in applications that combine low and high precision. Low/mixed precision reduces the volume of the data and the memory traffic, and it is a way to tackle large scale problems on parallel computers.
In such a setting, it is in particular important to avoid operations that square the condition number. 

\subsubsection{Theory behind the growth of $\kappa_2(G_x)$}\label{SSS=kappaGrowthTheory}
The behaviour of the condition number $\kappa_2(G_x)$, shown in Example \ref{EX:Cylinder-kappaG}, is an insidious and potentially serious problem.
It is easily explained using the following classical theorem from matrix analysis (see e.g. \cite[Theorem 4.3.4]{Horn-Johnson-MA-90}).
\begin{theorem}
Let $H$ be $n\times n$ Hermitian, $v\in\C^n$, and let $\lambda_i(\cdot)$ denote the $i$-th smallest eigenvalue of a Hermitian matrix ($\lambda_1(\cdot)\leq\cdots\leq\lambda_n(\cdot)$). Then the eigenvalues of $H$ and
$H\pm vv^*$ interlace:
\begin{eqnarray*}
&& \lambda_i(H\pm vv^*) \leq \lambda_{i+1}(H)\leq \lambda_{i+2}(H\pm vv^*),\;\;\;i=1,\ldots, n-2 ;\\
&& \lambda_i(H) \leq\lambda_{i+1}(H\pm vv^*) \leq\lambda_{i+2}(H),\;\;\; i=1,\ldots, n-2 .
\end{eqnarray*}
Also, by the Weyl's theorem, $\max_i |\lambda_i(H)-\lambda_i(H\pm vv^*)|\leq \|vv^*\|_2=\|v\|_2^2$, and by the monotonicity principle 
$\lambda_i(H+vv^*)\geq \lambda_i(H)$ for all $i$.
\end{theorem}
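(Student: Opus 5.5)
The plan is to derive everything from the Courant--Fischer min--max characterization of the eigenvalues of a Hermitian matrix,
\[
\lambda_k(H)=\min_{\dim\mathcal S=k}\;\max_{x\in\mathcal S,\ \|x\|_2=1} x^*Hx=\max_{\dim\mathcal S=n-k+1}\;\min_{x\in\mathcal S,\ \|x\|_2=1} x^*Hx .
\]
I will treat the $+$ and $-$ cases simultaneously by writing $H' = H+\sigma vv^*$ with $\sigma\in\{+1,-1\}$. The single mechanism behind all the interlacing inequalities is this: restricting to a subspace orthogonal to $v$ (codimension at most one) makes the quadratic forms of $H$ and $H'$ coincide. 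There $x^*H'x=x^*Hx$.

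\textbf{Interlacing.} To show $\lambda_i(H')\le\lambda_{i+1}(H)$, take the $(i+1)$-dimensional subspace $\mathcal S$ attaining the minimum for $\lambda_{i+1}(H)$. Then $\mathcal S\cap v^\perp$ has dimension at least $i$, and on it the two quadratic forms agree. Hence
\[
\lambda_i(H')\le \max_{x\in\mathcal S\cap v^\perp}x^*H'x=\max_{x\in \mathcal S\cap v^\perp}x^*Hx\le \lambda_{i+1}(H).
\]
Since $H=H'-\sigma vv^*$ is itself a rank-one perturbation of $H'$, the same argument with the roles of $H$ and $H'$ exchanged gives $\lambda_i(H)\le\lambda_{i+1}(H')$.

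Applying each of these two inequalities at index $i$ and at index $i+1$ yields the chains stated in the theorem:
\[
\lambda_i(H')\le\lambda_{i+1}(H)\le\lambda_{i+2}(H'),\qquad \lambda_i(H)\le\lambda_{i+1}(H')\le\lambda_{i+2}(H),
\]
valid for $i=1,\ldots,n-2$, which is exactly the index range for which all terms are defined. (In fact the theorem only asks for these index ranges, so no sharper bookkeeping is needed.)

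\textbf{Weyl and monotonicity.} For Weyl's bound I use $|x^*H'x-x^*Hx|=|x^*v|^2\le\|v\|_2^2$ for every unit vector $x$. Inserting this into the min--max formula over the same subspace gives $|\lambda_i(H')-\lambda_i(H)|\le\|vv^*\|_2=\|v\|_2^2$. For monotonicity, $x^*(H+vv^*)x\ge x^*Hx$ for all $x$, so the min--max formula gives $\lambda_i(H+vv^*)\ge\lambda_i(H)$ for every $i$.

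The only delicate step is the dimension count $\dim(\mathcal S\cap v^\perp)\ge \dim\mathcal S-1$, together with choosing the correct form of Courant--Fischer (min--max versus max--min) so that the inequalities point the right way. Everything else is routine. Since this is a classical result, citing Horn--Johnson as the paper does is also acceptable.
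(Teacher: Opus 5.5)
Your proof is correct. The paper gives no argument of its own and simply cites Horn--Johnson (Theorem 4.3.4), whose proof is this same Courant--Fischer argument: the extra constraint $x\perp v$ costs one dimension and makes the quadratic forms of $H$ and $H\pm vv^*$ coincide, and your min--max derivations of the Weyl bound and of monotonicity are likewise the standard arguments behind the results the paper invokes by name.
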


\begin{proposition}\label{PROP:kappaG-growth}
\noindent In the case of the rank-one update (\ref{eq:XmewXnewT-1}), with $\gamma_x\neq 0$,
$$
\lambda_{\min}(\widetilde X_{new}\widetilde X_{new}^T) \leq \lambda_{\min}(\widetilde X\widetilde X^T),\;\;\;\lambda_{\max}(\widetilde X_{new}\widetilde X_{new}^T) \geq \lambda_{\max}(\widetilde X\widetilde X^T),
$$
i.e. $\kappa_2(\widetilde X_{new}\widetilde X_{new}^T)\geq \kappa_2(\widetilde X\widetilde X^T)$. In the case of $\gamma_x = 0$, $\lambda_{\max}(\widetilde X_{new} \widetilde X_{new}^T) \geq \lambda_{\max}(\widetilde X \widetilde X^T)$, and the smallest eigenvalue satisfies $\lambda_{\min}(\widetilde X_{new} \widetilde X_{new}^T) \in [ \lambda_{\min}(\widetilde X \widetilde X^T), \lambda_2(\widetilde X \widetilde X^T)]$. (Hence, in the case $\gamma_x=0$ both increase and decrease in condition number are possible.)
\end{proposition}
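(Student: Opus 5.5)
Write $G=\widetilde X\widetilde X^T\in\R^{r\times r}$, which is positive definite because $\widetilde X$ has full row rank. The case $\gamma_x\neq 0$ is handled by embedding $G$ into an $(r+1)\times(r+1)$ matrix and viewing the update in (\ref{eq:XmewXnewT-1}) as a rank-one perturbation of that embedding. The case $\gamma_x=0$ is a direct application of the interlacing theorem above to (\ref{eq:XmewXnewT-2}).

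\emph{Case $\gamma_x\neq 0$.} Set $H=\left(\begin{smallmatrix} G & 0\cr 0 & 0\end{smallmatrix}\right)$ and $v=\left(\begin{smallmatrix} g_x\cr \gamma_x\end{smallmatrix}\right)$, so that $\widetilde X_{new}\widetilde X_{new}^T=H+vv^T$ by (\ref{eq:XmewXnewT-1}). The eigenvalues of $H$ are $0$ together with the eigenvalues $\lambda_1(G)\le\cdots\le\lambda_r(G)$. Ordered increasingly, this gives $\lambda_1(H)=0$, $\lambda_2(H)=\lambda_{\min}(G)$ and $\lambda_{r+1}(H)=\lambda_{\max}(G)$.

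Taking $i=1$ in the first interlacing chain gives $\lambda_1(H+vv^T)\le\lambda_2(H)=\lambda_{\min}(G)$. This is the bound on the smallest eigenvalue.

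For the largest eigenvalue I would use monotonicity, $\lambda_{r+1}(H+vv^T)\ge\lambda_{r+1}(H)=\lambda_{\max}(G)$. A more elementary alternative is the Rayleigh quotient with the vector $(u^T,0)^T$, where $u$ is a top eigenvector of $G$.

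The new matrix $\widetilde X_{new}$ is upper block triangular with $\gamma_x\neq 0$, so it has full row rank. Hence $\widetilde X_{new}\widetilde X_{new}^T$ is positive definite. Its condition number is therefore the ratio of its extreme eigenvalues, and both inequalities together give $\kappa_2(\widetilde X_{new}\widetilde X_{new}^T)\ge\kappa_2(G)$.

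\emph{Case $\gamma_x=0$.} Here the update is $G+g_xg_x^T$ in the same dimension $r$. Monotonicity gives $\lambda_i(G+g_xg_x^T)\ge\lambda_i(G)$ for all $i$. With $i=r$ this is the statement about $\lambda_{\max}$, and with $i=1$ it is the lower bound for $\lambda_{\min}$.

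The upper bound $\lambda_1(G+g_xg_x^T)\le\lambda_2(G)$ comes from the second interlacing chain, $\lambda_i(H)\le\lambda_{i+1}(H\pm vv^*)\le\lambda_{i+2}(H)$, read with the roles of $H$ and $H+vv^*$ exchanged. Concretely, apply it to $H'=G+g_xg_x^T$, for which $H'-g_xg_x^T=G$. Alternatively, Courant--Fischer gives it directly: restrict to the two-dimensional span of the two lowest eigenvectors of $G$, intersect with $g_x^\perp$ to get a unit vector $w$, and then $w^T(G+g_xg_x^T)w=w^TGw\le\lambda_2(G)$.

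The closing parenthetical remark follows because both extreme eigenvalues can grow. If $g_x$ is aligned with the top eigenvector of $G$, only $\lambda_{\max}$ changes, so $\kappa_2$ increases. If $g_x$ is aligned with the bottom eigenvector, only $\lambda_{\min}$ increases, so $\kappa_2$ decreases.

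The only delicate step is the index bookkeeping in the interlacing theorem. In the case $\gamma_x\neq 0$ one must place the eigenvalue $0$ of the bordered $H$ correctly, and in the case $\gamma_x=0$ one must apply the theorem in the direction "$G$ is a negative rank-one update of $H'$". The rest is routine.
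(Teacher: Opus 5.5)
Your proof is correct and is essentially the paper's own argument. The paper gives no separate proof beyond invoking interlacing, Weyl and monotonicity on the splittings $\widetilde X_{new}\widetilde X_{new}^T=\mathrm{diag}(\widetilde X\widetilde X^T,0)+vv^T$ from (\ref{eq:XmewXnewT-1}) and $\widetilde X\widetilde X^T+g_xg_x^T$ from (\ref{eq:XmewXnewT-2}), exactly as you do. One small fix in your closing remark: aligning $g_x$ with a bottom eigenvector lowers $\kappa_2$ only under a restriction such as $\lambda_{\min}(G)$ simple and $\|g_x\|_2^2\le\lambda_{\max}(G)-\lambda_{\min}(G)$, since otherwise the shifted eigenvalue can become the new maximum (e.g.\ $G=\mathrm{diag}(1,2)$, $g_x=10e_1$ raises $\kappa_2$ from $2$ to $50.5$).
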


\subsubsection{Orthogonal factorization based updates}\label{SS=AvoidkappaG}
Suppose we start with the data $(X,Y)$ and we compute the low rank representations
$X=Q_x\widetilde X$, $Y=Q_y\widetilde Y$ \eqref{eq:RRF-XY}. The goal is to apply the DMD for this and subsequently updated/augmented matrices, but without squaring the condition number and, analogously to (\ref{eq:QxTAQX-2}), with complexity that increases with the rank instead of the number of data snapshots $n$. That is, the pseudoinverse $\widetilde X^\dagger$ must be computed without using $(\widetilde X\widetilde X^T)^{-1}$. The solution are suitable orthogonal factorizations.
In the sequel, we will use the notation from \S \ref{SS=AddNewData}.

If we factor $\widetilde X = T\widetilde Q^T$ into orthogonal factorization (RQ with upper triangular $T=R$ or LQ with lower triangular $T=L$, $\widetilde Q^T \widetilde Q=I$) it is possible to avoid the squaring of the condition number that arises when using $\widetilde X \widetilde X^T$. Note that $T$ is square nonsingular.
Then $\widetilde X^\dagger = \widetilde Q T^{-1}$, but this alone is inadequate since $\widetilde Q$ is of same dimensions $n\times r_x$ as $\widetilde X$---the matrix that is not computed because of the dimension $n$. A simple trick resolves this: using \eqref{eq:QxTAQX-2}, we can write
\begin{equation}\label{QxTAQx-sa-LQ}
    Q_x^T AQ_x =  Q_x^T Q_y(\widetilde Y \widetilde Q)T^{-1} =  Q_{x,y}G_{y,q}T^{-1},
\end{equation}
and it remains to devise an updating scheme for $G_{y,q} = \widetilde Y \widetilde Q \in \R^{r_y \times r_x}$ and $T \in \R^{r_x \times r_x}$.
\begin{proposition}\label{PROP:NewUpdate}
    Let $X = Q_x \widetilde X$, $Y = Q_y \widetilde Y$ be the low-rank decompositions (\ref{eq:RRF-XY}) of $X$ and $Y$ with orthonormal $Q_x$, $Q_y$. Assume that the factor $T$ from the orthogonal factorization (RQ or LQ) $\widetilde X=T\widetilde Q^T$, and the product $G_{y,q}=\widetilde Y \widetilde Q$ are available. Let for $X_{new}=(X\;\x_{new})$ and $Y_{new}=(Y\;\y_{new})$ the low rank factors 
    $\widetilde X_{new}$, $\widetilde Y_{new}$ be as in \S \ref{SS=AddNewData}.
    Then, $T_{new}$ and $G_{y,q}^{(new)} = \widetilde Y_{new}\widetilde Q_{new}$ can be computed, respectively, as
    \begin{equation}\label{update-L}
        T_{new} = \left\{ \begin{array}{ll}\begin{pmatrix}
        T & g_x \\
        \mathbf{0} & \gamma_x 
    \end{pmatrix}
        U_G\left(:, 1:r \right) , & \gamma_x >0\cr
       \begin{pmatrix}
        T & g_x \cr
        \end{pmatrix}
        U_G\left(:, 1:r \right), & \gamma_x=0
        \end{array}\right.  \;\;\mbox{and}\;\;
        G_{y,q}^{(new)}= \left\{ \begin{array}{ll}
        \begin{pmatrix}
            G_{y,q}  & g_y \cr \mathbf{0} & \gamma_y
        \end{pmatrix} 
        U_G\left(:, 1:r \right), & \gamma_y >0 \cr 
        \begin{pmatrix}
            G_{y,q}  & g_y 
        \end{pmatrix} 
        U_G\left(:, 1:r \right), & \gamma_y=0
        \end{array} \right. ,
    \end{equation}
where $r\in \{r_x, r_x+1\}$ is the rank of $X_{new}$ and $U_G$ is the product of $r_x$ Givens rotation matrices. In the case of RQ factorization and $\gamma_x>0$, $U_G$ is the $(r_x+1)\times (r_x+1)$ identity matrix.
\end{proposition}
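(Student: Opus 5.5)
We start from the block form of $\widetilde X_{new}$ in \eqref{eq:Xnew=QX} and substitute $\widetilde X = T\widetilde Q^T$. For $\gamma_x>0$ this gives
\begin{equation*}
\widetilde X_{new} = \begin{pmatrix} T\widetilde Q^T & g_x \cr \mathbf{0} & \gamma_x\end{pmatrix}
= \begin{pmatrix} T & g_x \cr \mathbf{0} & \gamma_x\end{pmatrix}\begin{pmatrix} \widetilde Q^T & \mathbf{0} \cr \mathbf{0} & 1\end{pmatrix} .
\end{equation*}
For $\gamma_x=0$ it gives $\widetilde X_{new} = \begin{pmatrix} T & g_x\end{pmatrix}\begin{pmatrix} \widetilde Q^T & \mathbf{0} \cr \mathbf{0} & 1\end{pmatrix}$. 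The second factor $\widehat Q^T := \mathrm{diag}(\widetilde Q,1)^T$ has orthonormal rows. The only task left is therefore to bring the small $r_x\times(r_x+1)$ or $(r_x+1)\times(r_x+1)$ matrix $M=\begin{pmatrix} T & g_x\cr \mathbf{0}&\gamma_x\end{pmatrix}$ (respectively $\begin{pmatrix} T & g_x\end{pmatrix}$) to triangular form by right multiplication with an orthogonal $U_G$.

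For this we apply $r_x$ Givens rotations from the right, acting on column pairs, to annihilate the entries of the appended column $g_x$ one at a time against the diagonal of $T$. For the LQ variant, with $T=L$ lower triangular, we rotate columns $(j,r_x+1)$ for $j=1,\ldots,r_x$ in order. Each rotation zeros $g_x(j)$ and mixes column $r_x+1$ with column $j$. Since column $r_x+1$ at that stage is nonzero only in rows $\geq j$, lower triangularity of the leading part is preserved. In the RQ variant with $\gamma_x>0$, the matrix $M$ is already upper triangular, so $U_G=I$. In the RQ variant with $\gamma_x=0$, we instead chase the column so as to obtain $\begin{pmatrix}\mathbf{0} & R_{new}\end{pmatrix}$; this needs $r_x$ rotations on adjacent column pairs from the right end. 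We then write $MU_G=\begin{pmatrix} T_{new} & \mathbf{0}\end{pmatrix}$, up to a column permutation which we absorb into $U_G$, so that $T_{new}=MU_G(:,1:r)$. This identifies $r=r_x+1$ when $\gamma_x>0$ and $r=r_x$ when $\gamma_x=0$.

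Having done so, we get $\widetilde X_{new} = (MU_G)(\widehat Q U_G)^T = T_{new}\widetilde Q_{new}^T$ with $\widetilde Q_{new}=\widehat Q U_G(:,1:r)$, which has orthonormal columns. Nonsingularity of $T_{new}$ follows from the full row rank of $\widetilde X_{new}$. For the second formula we write $\widetilde Y_{new}$ analogously, as $\begin{pmatrix} \widetilde Y & g_y\cr \mathbf{0}&\gamma_y\end{pmatrix}$ or $\begin{pmatrix}\widetilde Y & g_y\end{pmatrix}$, and compute $\widetilde Y_{new}\widehat Q = \begin{pmatrix} G_{y,q} & g_y\cr\mathbf{0}&\gamma_y\end{pmatrix}$ (respectively without the last row). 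Multiplying on the right by $U_G(:,1:r)$ then gives $G_{y,q}^{(new)}$ as in \eqref{update-L}. The main point requiring care is the bookkeeping in the $\gamma_x=0$ case, where $M$ is wide: we must check that after the rotations the discarded column is exactly zero, so that truncating to $U_G(:,1:r)$ loses nothing. This holds because $\mathrm{rank}(M)=r_x$ forces the last column of $MU_G$ to vanish once the leading $r_x\times r_x$ block is triangular and nonsingular.
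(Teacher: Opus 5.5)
Your argument follows the paper's proof. Both use the splitting $\widetilde X_{new}=M\,\widehat Q^T$ with $\widehat Q=\mathrm{diag}(\widetilde Q,1)$ (the paper's $\overline T\,\overline Q^T$), right Givens rotations applied to the small factor $M$, $\widetilde Q_{new}=\widehat Q U_G$, and the same evaluation of $\widetilde Y_{new}\widehat Q$. Two points should be corrected.

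\textbf{The closing justification is invalid.} You claim that $\mathrm{rank}(M)=r_x$ forces the last column of $MU_G$ to vanish once the leading block is triangular and nonsingular. This is false. Every $r_x\times(r_x+1)$ matrix with a nonsingular leading block has rank $r_x$, whatever its last column is. For example, $\begin{pmatrix} I_{r_x} & e_1\end{pmatrix}$ has rank $r_x$ and a nonsingular triangular leading block, yet a nonzero last column. So the rank carries no information here. The vanishing comes from the construction you already described. In the sweep over the pairs $(j,r_x+1)$, each rotation annihilates entry $j$ of the last column and preserves the zeros above it. Hence, when $M$ has only $r_x$ rows, that column is identically zero after $r_x$ steps. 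In your chase, column $1$ is annihilated entry by entry in the same way. Replace the rank sentence by this argument.

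\textbf{The RQ case with $\gamma_x=0$ takes an unnecessary detour.} Your adjacent-column chase followed by a cyclic permutation is correct. However, once the permutation is absorbed, $U_G$ is no longer the product of $r_x$ Givens rotations that the statement asserts. The paper instead uses the same pairs $(i,r_x+1)$ as for $L$, but in the order $i=r_x,\dots,1$. This works for three reasons. At step $i$ the last column is already zero below row $i$. Column $i$ of $R$ is still untouched and supported in rows $\le i$. So the rotation keeps column $i$ in rows $\le i$, preserving upper triangularity, while it zeros entry $i$ of the last column. The result is $\begin{pmatrix} R_{new} & \mathbf{0}\end{pmatrix}$ directly, so $T_{new}=MU_G(:,1:r_x)$ with $U_G$ a genuine product of $r_x$ rotations. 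Only the top-to-bottom order fails for $R$.
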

\begin{proof}
 Let $\gamma_x \neq 0$.  Then $\widetilde X=T\widetilde Q^T$  factors (\ref{eq:tildeXnew}) into 
    \[
    \widetilde X_{new} =
    \begin{pmatrix}
        \widetilde X & g_x \\
        \mathbf{0} & \gamma_x 
    \end{pmatrix} =
    \begin{pmatrix}
        T & g_x \\
        \mathbf{0} & \gamma_x 
    \end{pmatrix}\begin{pmatrix}
        \widetilde Q^T & \mathbf{0} \\
        \mathbf{0} & 1
    \end{pmatrix} = \overline T \ \overline Q^T .\;\;(\overline T=\left(\begin{smallmatrix} x & x & x & +\cr
    0 & x & x & + \cr 0 & 0 & x& +\cr 0 & 0 & 0 & *\end{smallmatrix}\right)\;\mbox{or}\; 
    \overline T=\left(\begin{smallmatrix} x & 0 & 0 & +\cr
    x & x & 0 & + \cr x & x & x& +\cr 0 & 0 & 0 & *\end{smallmatrix}\right))
    \]
Hence, in the RQ case ($T=R$) the new factorization is readily available: $\widetilde X_{new}=R_{new}\widetilde Q_{new}^T$ with  $R_{new} = \overline T$, $\widetilde Q_{new} = \overline Q$. In the case of LQ factorization ($T=L$), $\overline{T}$ must be transformed into the lower triangular form.
This is accomplished by a sequence of Givens rotations $G_i$ of the column pairs $(i, r_x+1)$, $i=1,2,..., r_x$, that sequentially nullify the first $r_x$ elements of the last column. Then 
$$
\widetilde X_{new} = (\overline{T} G_1^T G_2^T \cdots G_{r_x}^T )(G_{r_x} \cdots G_2 G_1 \overline Q^T) = L_{new} \widetilde Q_{new}^T. \;\;
(L_{new}=\left(\begin{smallmatrix} x & 0 & 0 & 0\cr
    x & x & 0 & 0 \cr x & x & x& 0\cr x & x & x & x\end{smallmatrix}\right))
$$
If $\gamma_x = 0$, corrections are needed for both the RQ and the LQ factorization, because
\[
    \widetilde X_{new} = \begin{pmatrix}
        \widetilde X & g_x
    \end{pmatrix} 
    =\begin{pmatrix}
        T & g_x 
    \end{pmatrix}\begin{pmatrix}
        \widetilde Q^T & \mathbf{0} \\
        \mathbf{0} & 1
    \end{pmatrix} = \overline T \ \overline Q^T. \;\;(\overline T=\left(\begin{smallmatrix} x & x & x & +\cr
    0 & x & x & + \cr 0 & 0 & x& + \end{smallmatrix}\right)\;\mbox{or}\; 
    \overline T=\left(\begin{smallmatrix} x & 0 & 0 & +\cr
    x & x & 0 & + \cr x & x & x& +\end{smallmatrix}\right))
\]
Here, $\overline T$ is $r_x \times (r_x+1)$ matrix with triangular principal $r_x\times r_x$ submatrix. In both cases ($T=R$ and $T=L$) Givens rotations are used, this time to completely nullify the last column of $\overline T$.
The result is an  updated orthogonal factorization
    \[
    \widetilde X_{new} = (\overline{T} G_1^T G_2^T \cdots G_{r_x}^T )(G_{r_x} \cdots G_2 G_1 \overline Q^T) 
    = \begin{pmatrix}
        T_{new} & \mathbf{0} 
    \end{pmatrix} \begin{pmatrix}
        Q_{new}^T \\ q_2^T
    \end{pmatrix} = T_{new} Q_{new}^T .
    \]
If $T=R$, the $G_i$'s are applied bottom-to-top and right-to-left, i.e. nullifying elements of final column from the last element to the first by performing rotations on columns $(i, r_x+1)$ for $i=r_x,...,2,1$.
If $T=L$, the pattern is top-to-bottom and left-to-right, with pivotal indices $(i, r_x+1)$, $i=1,2,..., r_x$ (as in the case $\gamma_x \neq 0$).
Clearly, $Q_{new}$ is orthonormal.
This proves the first relation in \eqref{update-L}, where we set $U_G=G_1^TG_2^T\cdots G_{r_x}^T$.
For the update of $G_{y,q} = \widetilde Y \widetilde Q^T$, in the case $\gamma_y \neq 0$,  use $\widetilde Q_{new} = \overline Q U_G$ to obtain
    \begin{equation*}
    G_{y,q}^{(new)} = \widetilde Y_{new} \widetilde Q_{new} 
    = \begin{pmatrix}
        \widetilde Y & g_y \\ \mathbf{0}^T & \gamma_y
    \end{pmatrix}\begin{pmatrix}
        \widetilde Q & \mathbf{0} \\ \mathbf{0} & 1
    \end{pmatrix}U_G
    = \begin{pmatrix}
        G_{y,q} & g_y \cr \mathbf{0} & \gamma_y
    \end{pmatrix} U_G .
    \end{equation*}
    In a completely analogous way one can show that the relation (\ref{update-L}) holds for $\gamma_y = 0$, as claimed. 
\end{proof}
\noindent When using (\ref{QxTAQx-sa-LQ}) instead of (\ref{eq:QxTAQX-2}), the inverse is used with  $\kappa_2(T) = \kappa_2(\widetilde X) = \kappa_2(X)=\sqrt{\kappa_2(G_x)}$. 
The Givens rotation matrices $G_i$ only rely on $T$ and $g_x$, $\gamma_x$---the matrix  $\widetilde Q$ is implicit. Thanks to this, the dimension $n$ is completely avoided. In the scenario $r_x,r_y \ll n \ll m$ applying Givens rotations is inexpensive and requires $\mathcal{O}(r^2)$ operations ($r \sim r_x, r_y$). In fact, in the case $T=R$ and $\gamma_x>0$ no extra effort is needed. In that sense the RQ factorization is preferred to LQ.

On the other hand, in an application scenario where before the streaming phase we start with a relatively large batch $(X,Y)$ and its low rank representations, the initial RQ factorization of $\widetilde X$ must be computed. Some software packages do not contain the RQ factorization (such as Matlab), or do (such as LAPACK, MAGMA, scipy.linalg in Python) but it may be slower\footnote{for $100$ instances of random matrix $A \in \R^{10^5 \times 200}$ it took $\approx 321$s for QR factorizations and $\approx 721$s for RQ factorizations of a $100$ random $A \in \R^{200 \times 10^5}$} than the QR factorization. In addition, some additional care is needed when using these subroutines for our purposes. For example, in our case of $r_x\times n$ matrix $\widetilde X$, the LAPACK's \texttt{xGERQF} computes the RQ factorization as 
$$
\widetilde X = \left( \begin{smallmatrix} 0 & 0 & 0 & * & * \cr
0 & 0 & 0 & 0& *\end{smallmatrix}\right) 
\left( \begin{smallmatrix} \bullet & \bullet & \bullet & \bullet & \bullet \cr 
\bullet & \bullet & \bullet & \bullet & \bullet \cr
\bullet & \bullet & \bullet & \bullet & \bullet \cr
\star & \star & \star & \star & \star \cr 
\star & \star & \star & \star & \star\end{smallmatrix}\right), \;\;\mbox{which can be used as}\;\;
\widetilde X = 
\left( \begin{smallmatrix}  * & * \cr
 0& *\end{smallmatrix}\right)
\left( \begin{smallmatrix} 
\star & \star & \star & \star & \star \cr 
\star & \star & \star & \star & \star\end{smallmatrix}\right).
$$
Also, LAPACK contains the function \texttt{xLASWLQ} for computing the LQ factorization of wide matrices. The LQ factorization of $\widetilde X$ is also easily obtained by transposing the QR factorization of $\widetilde X^T$.

\begin{remark}
    {\em
    It is well known that QR factorization without pivoting can be numerically unstable when performed on matrices with increasing row norms \cite{higham-asna}. This is equivalent to the problem of computing RQ factorization of a matrix whose column norms decrease significantly. Therefore, we suggest using LQ factorization if declining norm of snapshots is expected.}
\end{remark}

\begin{remark}
    {\em
    Initialization can also be done using SVD. If $X = U \Sigma V \approx U_k \Sigma_k V_k$ where $k$ is the numerical rank, then $Q_x = U_k$, $T = \Sigma_k$ and $\widetilde Q = V_k$. Since $T$ is diagonal it can be regarded as both upper and lower triangular matrix.}
\end{remark}

If the first factorization is the LQ, it can be transformed into the RQ at the beginning, and the streaming part will maintain the upper triangular form. A computation-free method is 
to use the $r_x\times r_x$ permutation matrix $\Pi$ that reverses the order and write 
$X=Q_x\widetilde X = Q_x L\widetilde Q^T = (Q_x\Pi)(\Pi^T L\Pi)(\Pi^T\widetilde Q^T)$, where $R=\Pi^T L\Pi$ is upper triangular. This reshuffles the columns of $Q_x$, making this method less attractive because operations that involve possibly large dimensional $Q_x$ should be avoided.\footnote{One could make this permutation implicit and keep $Q_x$ as the pair $(Q_x,\Pi)$, with the necessary adjustments when using it.}
Therefore, the transformation from the LQ into the RQ form is preferably based on multiplications of $L$ from the right using suitable orthogonal matrices. This can be done by a sequence of updates 
$
L = L (H_i\oplus I_{r_x-i}),\;\;i=r_x, r_x-1,\ldots 2,
$
where $H_i$ is the Householder reflector that transforms the last row of $L(1:i,1:i)$ by annihilating 
$L(i,1:i-1)$.

The new procedure is summarized in Algorithm \ref{ALG:Hemati-DMD-LQ}.
We will use $\mathbf \Gamma(\alpha_i, \alpha_j)$ to denote the $2 \times 2$ Givens rotation matrix that when applied to vector $\alpha = (\alpha_1,..., \alpha_{end})^T$ nullifies $\alpha_j$ onto $\alpha_i$. Algorithm \ref{ALG:Hemati-DMD-LQ} can be expanded so that $Q_{x,y}$ is updated as well. For simplicity we leave that out, but it is equivalent to procedure described in algorithm \ref{ALG:Hemati-DMD}. It is recommended to update $Q_{x,y}$ as well when $m \gg n$ and when DMD eigenvalues and modes are calculated frequently.

\begin{algorithm}
    \caption{\label{ALG:updateYQT}[$G_{y,q}, T$] = \texttt{updateYQT}($G_{y,q}, T, \widetilde x, \widetilde y, \gamma_x, \gamma_y, \mathrm{tri}$)}
    \begin{algorithmic}[1]
        \REQUIRE Matrices $G_{y,q}, T$ from \eqref{QxTAQx-sa-LQ}; low-rank representation of new snapshots $\widetilde x, \widetilde y$; nonnegative numbers $\gamma_x, \gamma_y$ as described in section \ref{SS=AddNewData};  character tri $\in \{\texttt{'U', 'L'}\}$ indicating whether $T$ is upper or lower triangular;
        \ENSURE Updated matrices $G_{y,q}, T$
        \STATE \algorithmicif \ $\gamma_x > 0$ \ \algorithmicthen \ $T = \left(\begin{smallmatrix}
        T \\ \mathbf{0}
    \end{smallmatrix}\right)$ \algorithmicend
    \STATE \algorithmicif \ $\gamma_y > 0$ \ \algorithmicthen \ $G_{y,q} = \left(\begin{smallmatrix}
        G_{y,q} \\ \mathbf{0}
    \end{smallmatrix}\right)$ \algorithmicend 
    \STATE $G_{y,q} = \begin{pmatrix}
        G_{y,q} & \widetilde y 
    \end{pmatrix}$;  $T = \begin{pmatrix} 
        T & \widetilde x
    \end{pmatrix}$; $r_x = \texttt{size}(T, 1)$
    \IF{\texttt{tri} $=$ \texttt{'L'}}
    \FOR{$i = 1,2,..., r_x-1$} 
    \STATE $G_{y,q}(:, [i \ r_x]) = G_{y,q}(:, [i \ r_x]) \mathbf \Gamma\left(T_{i,i}, T_{i, r_x}\right)^T$; \ $T(i:r_x, [i \ r_x]) = T(i:r_x, [i \ r_x]) \mathbf  \Gamma\left(T_{i,i}, T_{i, r_x}\right)^T$
    \ENDFOR
    \ELSIF{$\gamma_x = 0$}
    \FOR{$j = r_x-1,...,2,1$}
    \STATE $G_{y,q}(:, [j \ r_x] = G_{y,q}(:, [j \ r_x]) \mathbf \Gamma\left(T_{j,j}, T_{j, r_x}\right)^T$; \ $T(1:j, [j \ r_x] = T(1:j, [j \ r_x]) \mathbf \Gamma\left(T_{j,j}, T_{j, r_x}\right)^T$
    \ENDFOR
    \ENDIF
    \STATE \algorithmicif \ $\gamma_x = 0$ \ \algorithmicthen \ $T = T(:, 1:end-1)$; $G_{y,q} = G_{y,q}(:, 1:end-1)$ \algorithmicend
    \end{algorithmic}
\end{algorithm}

\begin{algorithm}[ht]
\caption{\label{ALG:Hemati-DMD-LQ} [$Q_x, Q_y, G_{y,q}, T$] = \texttt{AddxyTQ}($Q_x, Q_y, G_{y,q}, T, \x, \y , \tol{}_1, \tol{}_2$, tri)}
    \begin{algorithmic}[1]
    \REQUIRE orthonormal $Q_x, Q_y$ from (\ref{eq:RRF-XY}); $G_{y,q}$, $T$ from \eqref{QxTAQx-sa-LQ}; new data  $\x, \y$; tolerances $\tol{}_1, \tol{}_2, \tol{}_3$; character tri $\in \{\texttt{'U', 'L'}\}$ indicating whether $T$ is upper or lower triangular
    \ENSURE  Updated matrices $Q_x, Q_y, G_{y,q}, T$ 
    \STATE $[\q_{x},\widetilde x, \gamma_x]$ = GS\_update($Q_x, \x, \tol{}_1, \tol{}_2$);
    \STATE $[\q_{y},\widetilde y, \gamma_y]$ = GS\_update($Q_y, \y, \tol{}_1, \tol{}_2$)
    \IF{$\gamma_x > 0$}
    \STATE $Q_x = \begin{pmatrix}
        Q_x & \q_x
    \end{pmatrix}$;  $\widetilde x = \left( \begin{smallmatrix}
        \widetilde x \\ \gamma_x
    \end{smallmatrix}\right)$
    \ENDIF
    \IF{$\gamma_y > 0$}
    \STATE  $Q_y = \begin{pmatrix}
        Q_y & \q_y
    \end{pmatrix}$; $\widetilde y = \left( \begin{smallmatrix}
        \widetilde y \\ \gamma_y
    \end{smallmatrix}\right)$
    \ENDIF
    \STATE $[G_{y,q}, T] = \texttt{updateYQT}(G_{y,q}, T, \widetilde x, \widetilde y, \gamma_x, \gamma_y, \mathrm{tri})$
    \end{algorithmic}
\end{algorithm}

\subsubsection{Using updated Cholesky factors}\label{SS:UseUpdatedCholFactor}
The update described in Proposition \ref{PROP:NewUpdate} seems more expensive than the original update of $G_{y,x}$ and $G_x$ (see Algorithm \ref{ALG:Hemati-DMD}).  It is not. Namely, applying $G_x^{-1}$ requires Cholesky factorization ($O(r_x^3)$) and two triangular systems ($O(r_x^2r_y)$). This motivates another, more efficient, approach: to keep updating the Cholesky factor of $G_x$. 
This is done as follows. Let $G_x+g_x g_x^T=L_x L_x^T$. Then the updated  Cholesky factor is 
$$
G_{x,new}=\begin{pmatrix} G_x+g_x g_x^T & g_x\gamma_x\cr \gamma_x g_x^T & \gamma_x^2\end{pmatrix} = 
\begin{pmatrix} L_x & \mathbf{0} \cr b_x^T & \beta_x\end{pmatrix}
\begin{pmatrix} L_x^T & b_x \cr \mathbf{0} & \beta_x\end{pmatrix}\;\;\mbox{with}\;\;
b_x=\gamma_x L_x^{-1}g_x,\;\;\beta_x=\sqrt{\gamma_x^2-b_x^T b_x},
$$
and the $O(r_x^3)$ contribution to the cost (Line 2 in Algorithm \ref{ALG:Hemati-DMD-2}) can be replaced with $O(r_x^2)$. Note that this update may fail in finite precision computation if $G_{x,new}$ is ill-conditioned. In other words, the problem of the squared condition numbers has not been removed. This makes the method proposed in \S \ref{SS=AvoidkappaG} competitive: more numerical robustness is achieved with the same complexity. Computation of $L_x$ from the Cholesky factor of $G_x$ is outlined in \S \ref{SS=NewZhang-CholeskyUpdate}.
\subsubsection{TQ based updates with enforced maximal ranks}\label{SSS=TQ-Rank-Enforce}
It remains to adapt the new formulas to the upper limit $\widehat r$ for the number of columns of $Q_x$, as in section \ref{SS=Hemati-low_rank_constraints}. Here we do not have $G_x$, but we can use the orthogonal decomposition $X\approx Q_x T\widetilde Q^T$. It suffices to compute the SVD of $T$, $T = U_T \Sigma_T V_T^T$.  Then $T \approx U_T(:, 1\! :\!\rho_x) \Sigma_T(1\! :\!\rho_x, 1\! :\!\rho_x) V_T(:, 1\! :\!\rho_x)^T = \widetilde U_{\rho_x} \widetilde \Sigma_{\rho_x} \widetilde V_{\rho_x}$, where 
$\rho_x$ is the new rank as in \S \ref{SS=Hemati-low_rank_constraints}. Since $X = (Q_x U_T) \Sigma_T (\widetilde Q V_T)^T$ is the SVD, we have optimal low rank approximation
\begin{equation*}
    X \approx Q_x \widetilde U_{\rho_x} \widetilde \Sigma_{\rho_x} (\widetilde Q \widetilde V_{\rho_x})^T = Q_{x,new} T_{new} \widetilde Q_{new}^T,\;\;
    \mbox{where}\;\; Q_{x,new} = Q_x \widetilde U_{\rho_x}, \; T_{new} = \widetilde \Sigma_{\rho_x}, \; \widetilde Q_{new} = \widetilde Q \widetilde V_{\rho_x}.
\end{equation*}
Now $G_{y,q}^{(new)} = \widetilde Y\widetilde Q_{new} = (Y\widetilde Q)\widetilde V_{\rho_x} = G_{y,q}\widetilde V_{\rho_x}$.

To enforce the upper limit $\widehat r$ for the number of columns of $Q_y$, we may proceed as before and keep updating the matrix $G_y$; see Remark \ref{REM-ry-constraint} and Remark \ref{REM-OnGxGy}. If squaring the condition number is to be avoided, then we can use the TQ decomposition of $\widetilde Y = T_y \widetilde Q_y^T$. This requires 
changing the global structure of the algorithm, in order to suppress the dimension $n$. To that end, using the TQ factorizations $\widetilde X=T_x\widetilde Q_x^T$ and 
$\widetilde Y = T_y \widetilde Q_y^T$,  we rewrite \eqref{QxTAQx-sa-LQ}  as
\begin{equation}\label{QxTAQx-sa-LQ-Y}
    Q_x^TAQ_x = (Q_x^TQ_y)T_y(\widetilde Q_y^T \widetilde Q_x) T_x^{-1} ,
\end{equation}
where $T_y \in \R^{r_y \times r_y}$, $\widetilde Q_y^T \widetilde Q_x \in \R^{r_y \times r_x}$ and $T_x \in \R^{r_x \times r_x}$.  Using the SVD $T_y = U_{T_y} \Sigma_{T_y} V_{T_y}^T$, we proceed as above with the approximation 
$$
Y \approx (Q_y U_{T_y}(:, 1\! :\!\rho_y)) \Sigma_{T_y}(1\! :\!\rho_y, 1\! :\!\rho_y) (\widetilde Q_y V_{T_y}(:, 1\! :\!\rho_y))^T = Q_{y,new} T_{y,new} \widetilde Q_{y,new}^T \ ,
$$
where $Q_{y,new} = Q_y U_{T_y}(:, 1\! :\!\rho_y)$, $T_{y,new}=\Sigma_{T_y}(1\! :\!\rho_y, 1\! :\!\rho_y)$, $\widetilde Q_{y,new}=\widetilde Q_y V_{T_y}(:, 1\! :\!\rho_y)$. 
Note that $\widetilde Q_{y,new}$ is not computed. Instead, it suffices to update $\widetilde Q_{y,new}^T \widetilde Q_x = V_{T_y}(:, 1\! :\!\rho_y)(\widetilde Q_y^T \widetilde Q_x)$.
Hence, in this version three (instead of two) ``small'' matrices must be updated: $T_x$, $T_y$ and $\widetilde Q_{y,x} = \widetilde Q_y^T \widetilde Q_x$.

\subsubsection{Example - vorticity data from Example \ref{EX:Cylinder-kappaG}}\label{SSS:example-vorticity}
To simulate working in single precision, we update $G_{y,x}$ and $G_x$ in double precision, but then round them to single precision after each update and before computing modes and predictions. The same thing is done to $L$ and $\widetilde Q$. The results are shown in figures \ref{fig:vorticity_single} and \ref{fig:rec_errors_vorticity}. In figure \ref{fig:vorticity_single}, the algorithm starts from a batch of $n_0=15$ snapshots. One snapshot is added then a prediction one step ahead is made,  as outlined in \S \ref{SSS=DMD-task2}. The displayed results are obtained after adding $15$ snapshots. In figure \ref{fig:rec_errors_vorticity} two graphs show relative errors starting from a batch of $n_0 = 20$ snapshots when making predictions (a) one step ahead and (b) five steps ahead using the original proposition (HWR-sDMD) with $G_x$ and using TQ decomposition (TQ-sDMD), both in simulated single precision. For comparison, relative errors in double precision for HWR-sDMD are also depicted and coincide with errors for TQ-sDMD in simulated single precision\footnote{Relative errors of prediction using TQ-sDMD in double precision are omitted since they agree with the ones obtained when using single precision}. 
\vspace{-3mm}
\begin{figure}[ht]
    \centering
    \includegraphics[width=0.9\linewidth]{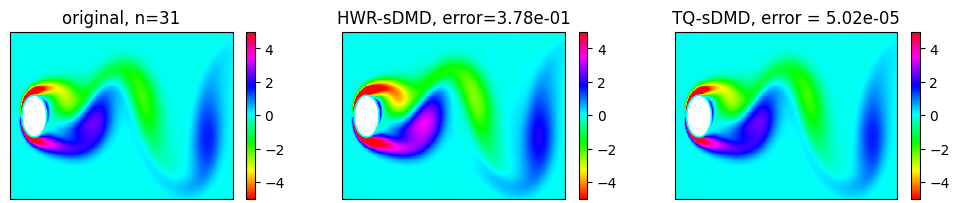}
    \caption{Comparison of predicted snapshots using the original algorithm HWR-sDMD (middle) and TQ-sDMD (using the LQ decomposition) (right). Left figure is the original snapshot at $n=31$. 
  The apparent differences in the prediction errors ($\texttt{5.02e-5}$ vs. $\texttt{3.78e-1}$)  show that TQ-sDMD has improved robustness
  to ill-conditioning and finite precision roundoff errors. While HWR-sDMD has difficulties even for one step prediction, TQ-sDMD retains accuracy even for multiple steps, see figure \ref{fig:pred-many-steps}.
  }
    \label{fig:vorticity_single}
\end{figure}
\begin{figure}[!h]
    \centering
    \begin{subfigure}[t]{0.5\textwidth}
        \centering
        \includegraphics[width=1\linewidth]{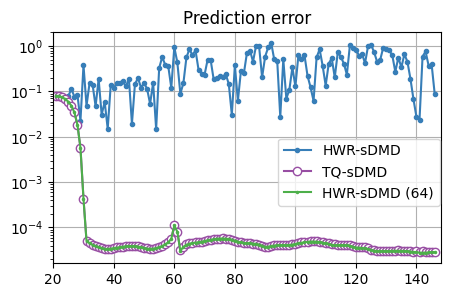}
    \caption{one step ahead}
    \end{subfigure}%
    \begin{subfigure}[t]{0.5\textwidth}
        \centering
        \includegraphics[width=1\linewidth]{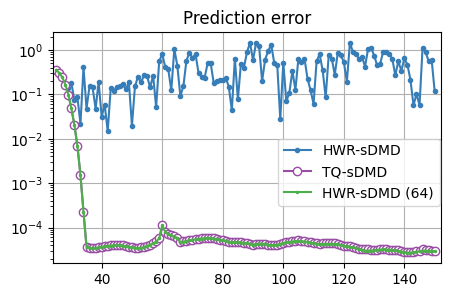}
    \caption{five steps ahead}
    \end{subfigure}
    \caption{Relative error when predicting one or five steps ahead when updating $A$ by using HWR-sDMD and TQ-sDMD and rounding to single precision before calling DMD, compared to updates by HWR-sDMD with calculation done entirely in double precision. 
    Here HWR-sDMD (64) denotes execution in the 64-bit double precision arithmetic.\label{fig:pred-many-steps}}
\label{fig:rec_errors_vorticity}
\end{figure}

\subsection{Streaming DMD with Exact DMD vectors}\label{SS=ExactDMD}
An often used variation of the DMD is the Exact DMD  \cite{tu-rowley-dmd-theory-appl-2014}, with the Exact DMD vectors, that are defined in the range of $Q_y$---see Lines 4 and 8 in Algorithm \ref{ALG:DMD:RRR} and the low rank representation (\ref{eq:RRF-XY}) of $Y$. In this section we show how to compute the Exact DMD vectors in the framework of the compressed streaming DMD \cite{hemati_williams_rowley_2014}, using both the cross products (\ref{eq:QxTAQX-2}) from \cite{hemati_williams_rowley_2014}  and  the new compression from \S \ref{SS=AvoidkappaG}, which avoids squaring the condition number of $\widetilde X$.

\begin{proposition}\label{prop:exact-sDMD}
Let $(w_i,\lambda_i)$, be an eigenpair of $Q_x^T A Q_x=Q_{x,y}G_{y,x}G_x^{-1}$. Then the corresponding Exact DMD eigenvector is 
$
\z_i^{(ex)} = Q_y\widetilde Y\widetilde X^\dagger w_i = Q_y G_{y,x}G_x^{-1}w_i,
$
and we have $A \z_i^{(ex)} = \lambda_i \z_i^{(ex)}$, where $A=YX^\dagger$. Hence, if $\z_i^{(ex)}\neq \mathbf{0}$ then it is an eigenvector of $A$. In terms of the compression (\ref{QxTAQx-sa-LQ}) 
\begin{equation}\label{eq:zex-compact-2}
\z_i^{(ex)} = Q_y  w_i^{(ex)},\;\;\;w_i^{(ex)}=G_{y,q}T^{-1}w_i.
\end{equation}
\end{proposition}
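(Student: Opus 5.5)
The plan is to work entirely with the low rank representation $X=Q_x\widetilde X$, $Y=Q_y\widetilde Y$ from \eqref{eq:RRF-XY}, with $\widetilde X$ of full row rank $r_x$, and to use $A=YX^\dagger$. I start by writing the pseudoinverse in factored form. Since $Q_x$ is orthonormal and $\widetilde X$ has full row rank, $X^\dagger=\widetilde X^\dagger Q_x^T$ with $\widetilde X^\dagger=\widetilde X^T(\widetilde X\widetilde X^T)^{-1}$. One checks this directly against the Penrose conditions, or by noting that $X^\dagger$ of a product with orthonormal left factor and full row rank right factor splits. This gives
\[
A=Q_y\widetilde Y\widetilde X^\dagger Q_x^T ,\qquad AQ_x=Q_y\widetilde Y\widetilde X^\dagger ,
\]
consistent with \eqref{eq:AQx-2}. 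The rewriting $Q_y\widetilde Y\widetilde X^\dagger w_i=Q_yG_{y,x}G_x^{-1}w_i$ then follows immediately from the definitions \eqref{eq-cross-products}.

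Next I verify the eigen-relation. With $\z_i^{(ex)}=AQ_xw_i$, I compute
\[
A\z_i^{(ex)}=AQ_x\,(Q_x^T A Q_x)\,w_i ,
\]
using $A=AQ_xQ_x^T$, which holds because $X^\dagger=\widetilde X^\dagger Q_x^T$ already carries the factor $Q_x^T$. Since $Q_x^TAQ_xw_i=\lambda_iw_i$, this equals $\lambda_i AQ_xw_i=\lambda_i\z_i^{(ex)}$. So whenever $\z_i^{(ex)}\neq\mathbf{0}$ it is an eigenvector of $A$. The one subtle point is exactly the identity $A=AQ_xQ_x^T$, i.e. that $A$ annihilates $\mathrm{range}(Q_x)^\perp$. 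This is the reason the statement specifies $A=YX^\dagger$ rather than an arbitrary least-squares solution, and I will state it explicitly.

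Finally, for \eqref{eq:zex-compact-2} I use the TQ factorization $\widetilde X=T\widetilde Q^T$ with $T$ square nonsingular and $\widetilde Q^T\widetilde Q=I$. Then $\widetilde X^\dagger=\widetilde QT^{-1}$, which I check via $\widetilde X\widetilde X^T=TT^T$:
\[
\widetilde X^T(TT^T)^{-1}=\widetilde QT^T T^{-T}T^{-1}=\widetilde QT^{-1}.
\]
Hence $\z_i^{(ex)}=Q_y(\widetilde Y\widetilde Q)T^{-1}w_i=Q_yG_{y,q}T^{-1}w_i$, and the eigenpair $(\lambda_i,w_i)$ is the same in both compressions because \eqref{QxTAQx-sa-LQ} and \eqref{eq:QxTAQX-2} represent the same matrix. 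None of these steps is hard. The main obstacle is only careful bookkeeping of the pseudoinverse factorization.
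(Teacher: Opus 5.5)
Your proposal is correct and follows essentially the paper's argument. The paper also uses $X^\dagger=\widetilde X^\dagger Q_x^T$ to get $\z_i^{(ex)}=YX^\dagger Q_xw_i=Q_y\widetilde Y\widetilde X^\dagger w_i$, and it proves the eigen-relation by the same substitution $YX^\dagger\z_i^{(ex)}=Q_y\widetilde Y\widetilde X^\dagger\,(Q_x^TQ_y\widetilde Y\widetilde X^\dagger w_i)=\lambda_i\z_i^{(ex)}$, which is your identity $A=AQ_xQ_x^T$ written out; your explicit check $\widetilde X^\dagger=\widetilde QT^{-1}$ supplies the step the paper calls straightforward.
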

\begin{proof}
The corresponding Ritz vector of the DMD matrix is $\z_i=Q_x w_i$. The Exact DMD vector is the result of one step of the power method (see \cite{Drmac-DMD-TOMS-2024}) i.e. 
\begin{equation*}
\z_i^{(ex)}=YX^\dagger Q_x w_i=Q_y\widetilde Y\widetilde X^\dagger Q_x^T Q_x w_i=Q_y \widetilde Y\widetilde X^\dagger w_i 
=
Q_y \widetilde Y\widetilde X^T (\widetilde X\widetilde X^T)^{-1}w_i = Q_y G_{y,x}G_x^{-1}w_i,
\end{equation*}
as claimed. Checking the formula (\ref{eq:zex-compact-2}), based on (\ref{QxTAQx-sa-LQ}), is straightforward. Further, 
$$
YX^\dagger \z_i^{(ex)} =Q_y\widetilde Y\widetilde X^\dagger {Q_x^T Q_y\widetilde Y\widetilde X^\dagger w_i}= \lambda_i Q_y\widetilde Y\widetilde X^\dagger  w_i = \lambda_i \z_i^{(ex)}.\;\;(Q_x^T Q_y\widetilde Y\widetilde X^\dagger w_i=Q_x^T A Q_x w_i=\lambda_i w_i) .
$$
\end{proof}
\noindent It is important to keep in mind that the $\z_i^{(ex)}$'s are exact eigenvectors only for the matrix $A_{ex}=YX^\dagger$. Since $m > n$, the matrix that actually generated the data $X$, $Y$ is not uniquely determined, and the exactness for $A_{ex}$ does not imply exactness for the actual underlying operator.  In that sense, calling these vectors \emph{exact} would be a  misnomer. However, the Exact DMD vectors can be used as alternative to the classical DMD eigenvectors. In the context of the (compressed) streaming DMD with low rank representations (\ref{eq:RRF-XY}), considered here, these vectors have an interesting and useful advantage, which we describe next. 

\subsubsection{KMD with the Exact DMD vectors} Suppose that we want to perform the KMD analysis (\S \ref{SSS=DMD-task2}) of the last received snapshot $y_n=Y(:,n)$. (More generally, we might want to analyze certain number of most recent snapshots in order to make forecasting and to apply control.)
Consider the following two representations that must find the coefficients $\alpha_1^{(n)},\ldots,\alpha_\ell^{(n)}$ to minimize the least squares errors
\begin{eqnarray}
\!\!\!\!\!\!\!    \|y_n - \sum_{i=1}^\ell \alpha_i^{(n)} \z_i\|_2 \!\!&=& \!\!\|Q_y \widetilde y_n - \sum_{i=1}^\ell \alpha_i^{(n)} Q_x w_i\|_2 \;\;\mbox{or}\label{eq:ynDMDz}\\
\!\!\!\!\!\!\!    \|y_n - \sum_{i=1}^\ell \alpha_i^{(n)} \z_i^{(ex)}\|_2 \!\! &=& \!\!\|Q_y \widetilde y_n - \sum_{i=1}^\ell \alpha_i^{(n)} Q_y w_i^{(ex)}\|_2 = \|\widetilde y_n - \sum_{i=1}^\ell \alpha_i^{(n)} w_i^{(ex)}\|_2 .\label{eq:ynDMDzex}
\end{eqnarray}
In (\ref{eq:ynDMDz}) the dimension of the problem is $m\times \ell$, and in (\ref{eq:ynDMDzex}) the invariance of the Euclidean norm under orthogonal transformations reduces the problem to $r_y\times\ell$. In other words, KMD can be performed in the low rank representation of the data.

Similarly, in the case of single trajectory ($y_i=x_{i+1}$) if the reconstruction (\ref{eq:modal-dcomp}) includes $y_n$, then dropping the first snapshot we have 
(using $\y_i=Q_y\widetilde y_i$, $\z_j^{(ex)}=Q_y w_j^{(ex)}$ and the unitary invariance of the norm)
$$
\sum_{i=1}^{n} w_i^2 \| \y_i - \sum_{j=1}^{\ell} \z_{\varsigma_j}^{(ex)} \alpha_j \lambda_{\varsigma_j}^{i-1}\|_2^2  = 
\sum_{i=1}^{n} w_i^2 \| \widetilde y_i - \sum_{j=1}^{\ell} w_{\varsigma_j}^{(ex)} \alpha_j \lambda_{\varsigma_j}^{i-1}\|_2^2 \longrightarrow \min_{\alpha_j} .
$$
With the DMD vectors $\z_i=Q_x w_i$, this reduction of the dimension of the least squares problem is not possible because the involved vectors are represented in two bases; see (\ref{eq:ynDMDz}). This reduction into a smaller subspace can significantly reduce computation time as will be shown later in figure \ref{fig:blast-error-single-onebas}.

\subsection{Discussion and need for further improvements}\label{SS=DiscussFurtherImprove}
The introduction of the Exact DMD vectors and the discussion of the KMD in \S \ref{SS=ExactDMD} raise an important practical issue. 
That is, keeping two separate bases $Q_x$ and $Q_y$ contains an unnecessary overhead that increases the memory footprint of the algorithm and its computational complexity.

This is particularly apparent in the case of the data from a single log trajectory. In the single trajectory setting, $\x_{new}=Y(:,end)$ and $\y_{new}$ is the new received snapshot. Note that $Y(:,end)$ has representation in the basis $Q_y$, and the updating procedure requires its representation in the $Q_x$--basis. This means that each vector is processed twice.  
Moreover, trajectory with $n$ pairs $(\x_i,\y_i\equiv \x_{i+1})$ contains $n+1$ vectors and yet two $m\times n$ orthonormal matrices are computed to represent them. Clearly, the ranges of $Q_x$ and $Q_y$ must have large intersection -- it is of dimension (at most) $n-1$. Maintaining two orthonormal bases for the ranges of $X$ and $Y$ is not optimal, in particular in large scale out-of-core computations, and in fact it is not necessary.

As discussed in \S \ref{SSS=DMD-task-1}, the  residuals (\ref{eq:DMD-residuals}) provide information on the quality of the approximate eigenpairs and should be returned together with the computed Ritz pairs. Using all computed eigenpairs in the KMD (\S \ref{SSS=DMD-task2}), including those with large residuals, precludes correct identification of coherent structures and induces large error if the KMD is used for forecasting.  

In this framework the residuals can be computed using (\ref{eq:AQx}) as $\|r_i\|_2$, where
\begin{equation}\label{eq:resid-two-bases}
r_i = A Q_x w_i - \lambda_i Q_x w_i = Q_y\widetilde Y \widetilde X^\dagger w_i - \lambda_i Q_x w_i = \z_i^{(ex)} - \lambda_i Q_x w_i.
\end{equation}
This requires lifting the vectors into the $m$--dimensional space, which makes computation of the residuals expensive, even  in  out-of-core computation for extremely large dimensions.

Other functionalities of an on-line streaming data processing are of interest.
For example, to capture changes in short-term dynamics, DMD based analysis must be flexible and allow for dynamically varying data window that also includes discarding oldest data (single snapshot or a block of snapshots). Removing $\ell$ oldest snapshot pairs is in batched processing fairly simple: $\widetilde X$, $\widetilde Y$ are simply updated by $\widetilde X\leftarrow \widetilde X(:,\ell+1:end)$ and $\widetilde Y\leftarrow \widetilde Y(:,\ell+1:end)$, respectively.
This may change the ranks of $\widetilde X$, $\widetilde Y$. A technical difficulty arises if the matrices are used in the cross--products $G_x=\widetilde X\widetilde X^T$, $G_{y,x}=\widetilde Y\widetilde X^T$. Namely, the update
$G_x\leftarrow G_x - \widetilde X(:,1:\ell)\widetilde X(:,1:\ell)^T$
is simply not feasible because all information on individual snapshots is blended into the cross products $G_x, G_y$ and $G_{y,x}$. This limits the functionality of the application;  it is impossible to gradually keep forgetting the oldest data or simply resize the active window by truncating all but a few most recent snapshots. 
We address these questions in a separate work.

%% file: sections/one_basis.tex
\graphicspath{{\subfix{../}}}

\newcommand{\rx}{\ensuremath{\widetilde S_x}}
\newcommand{\ry}{\ensuremath{\widetilde S_y}}

\section{One basis low-rank sDMD}\label{S=Hemati_ONE_BASIS}
Consider a sequence of data snapshots $\s_1,\s_2,\ldots,\s_n, \s_{n+1}$ from a single long trajectory, so that $\x_i=\s_i$, $\y_i=\s_{i+1}$ for all $i$, i.e. $X = S(:, 1:n)$ and $Y = S(:, 2:n+1)$. In the same way as in \S \ref{SS=Hemati-review}, the data matrix $S=(\s_1 \ \ldots \ \s_{n+1})$ can be written in a low-rank factored form 
\begin{equation}\label{eq:S=QtS}
    S = Q\widetilde S, \quad Q\in \R^{m \times r}, \;\;\widetilde S \in \R^{r \times (n+1)}.
\end{equation}
If this factorization is computed from scratch, i.e. in the streaming mode from the very first snapshot, then
initially $\widetilde S_{11}=\|\s_1\|_2$, $Q(:,1)=\s_1/\widetilde S_{11}$, and the updating procedures
from \S \ref{SS=Hemati-review} apply. The matrix $S$ will have an upper-staircase structure that depends on whether the new added snapshot adds new dimension and a new column of $Q$. We illustrate a few scenarios in the simple case of $n=5$:
$$
S = \left(\begin{smallmatrix} q_1 & q_2 & q_3 & q_4 & q_5 & q_6 \end{smallmatrix}\right)
\left(\begin{smallmatrix} 
\circledast & * & * & * & * & * \cr 
0 & \circledast & * & * & * & *  \cr 
0 & 0 & \circledast & * & * & *  \cr 
0 & 0 & 0 & \circledast & * & *  \cr
0 & 0 & 0 & 0 & \circledast & *  \cr
0 & 0 & 0 & 0 & 0 & \circledast  \cr
\end{smallmatrix}\right),\;
S = \left(\begin{smallmatrix} q_1 & q_2 & q_3 & q_4\end{smallmatrix}\right)
\left(\begin{smallmatrix} 
\circledast & * & * & * & * &  *\cr 
0 & \circledast & * & * & * &  * \cr 
0 & 0 & 0 & \circledast & * &  * \cr 
0 & 0 & 0 & 0 &  0 & \circledast \end{smallmatrix}\right),\;
S = \left(\begin{smallmatrix} q_1 & q_2 & q_3 \end{smallmatrix}\right)
\left(\begin{smallmatrix} 
\circledast & * & * & * & * &  *\cr 
0 & \circledast & * & * & * &  * \cr 
0 & 0 & 0 & \circledast & * &  * 
\end{smallmatrix}\right) .
$$
If in the initial step a batch of snapshot becomes available, it is important for compression of $A$ to first perform rank-revealing factorization of $X$ and then extend the rank-revealing factorization by $\y = Y(:, end)$ to obtain rank revealing factorization of $S = (X \ \y) = Q\widetilde S$. In this way, orthogonal basis for $X$ is easily separated from orthogonal basis for $S$. Extension for $\y$ is done using Gram-Schmid (re)orthogonalization. Visually, for a starting batch of $4$ snapshots and after two new snapshots obtained, a few scenarios for $S$ are
$$
S = \left(\begin{smallmatrix} q_1 & q_2 & q_3 & q_4 & q_5 & q_6 \end{smallmatrix}\right)
\left(\begin{smallmatrix} 
+ & + & + & * & * & * \cr 
+ & + & + & * & * & *  \cr 
+ & + & + & * & * & * \cr 
0 & 0 & 0 & \circledast & * & *  \cr
0 & 0 & 0 & 0 & \circledast & *  \cr
0 & 0 & 0 & 0 & 0 & \circledast  \cr
\end{smallmatrix}\right),\;
S = \left(\begin{smallmatrix} q_1 & q_2 & q_3 & q_4\end{smallmatrix}\right)
\left(\begin{smallmatrix} 
+ & + & + & * & * & *\cr 
+ & + & + & * & * & *\cr 
+ & + & + & * & * & * \cr 
0 & 0 & 0 & 0 &  0 & \circledast \end{smallmatrix}\right),\;
S = \left(\begin{smallmatrix} q_1 & q_2 & q_3 \end{smallmatrix}\right)
\left(\begin{smallmatrix} 
+ & + & + & * & * & *\cr 
+ & + & + & * & * & * \cr 
+ & + & + & * & * & *
\end{smallmatrix}\right) .
$$

\subsection{Low rank snapshots -- change of basis}\label{SS=LowRankSnapshotsBasisChange}
The factorization (\ref{eq:S=QtS}) is built ``on the fly'' by processing incoming  snapshots and, unlike the method from \S \ref{SS=Hemati-review}, it requires only one orthonormal basis. It implicitly contains (potentially low-rank) representations analogous to (\ref{eq:RRF-XY}):
\begin{equation}
X = Q \rx,\;\mbox{with}\; \rx = \widetilde S(1:r,1:n),\;\mbox{and}\; Y=Q\ry,\; \ry = \widetilde S(1:r, 2:n+1).
\end{equation}
Even in the full-rank case ($r=n+1$), the dimension reduction may be substantial in large dimensional simulations with $m\gg n$.
Working with $\widetilde S$ instead of $S$ can be viewed as a change of basis. This is best seen in the formulation via the Exact DMD matrix $YX^\dagger=Q \ry \rx^\dagger Q^T$, with the Rayleigh quotient 
$Q^T (YX^\dagger)Q=\ry\rx^\dagger$. 
In general, for any DMD matrix\footnote{Recall \S \ref{SS=DMDReview}.} $A$, 
\begin{equation}\label{eq:AQtSx}
AQ\rx = Q\ry P_{\mathcal{R}(X^T)}= Q\ry \rx^T(\rx^T )^\dagger ,
\end{equation}
where we have to read this relation carefully. Namely, the action of $A$ is defined by the data only on the range of $X$. Since the factorization (\ref{eq:S=QtS}) does not assume full rank of $\rx$, the rank $r$ and the structure of $S$ are determined during the computation. Depending on the last processed snapshot, the last row of $\rx$ can be zero or non-zero.

If the last row of $\rx$ is nonzero, then $\mathrm{range}(Q)=\mathrm{range}(X)$, $\rx$ is of full row rank, $AQ\rx$ is defined by the data and 
$AQ\rx \rx^T = Q\ry \rx^T$,  $AQ = Q\ry \rx^T (\rx \rx^T)^{-1}$. The Rayleigh quotient reads
\begin{equation}\label{eq:QTAQ-nonzero-row}
    Q^TAQ = \ry \rx^\dagger = (\ry \rx^T) (\rx \rx^T)^{-1} ,
\end{equation}
and each eigenpair $(\lambda,w)$ of $\ry \rx^\dagger$ yields the corresponding Ritz pair $(\lambda,Qw)$.

If the last row of $\rx$ is zero, then $\mathrm{range}(Q) \supsetneq \mathrm{range}(X)=\mathrm{range}(Q(:,1:r-1))$. In that case, $\rx = \begin{pmatrix} \widehat S_x \cr \mathbf{0}\end{pmatrix}$, where $\widehat S_x$ is of full row rank. Further, 
$AQ\rx = A Q(:,1:r-1)\widehat S_x$, and 
$$
\rx^\dagger = \begin{pmatrix} \widehat S_x^\dagger & \mathbf{0}\end{pmatrix},\;\;
(\rx\rx^T)^\dagger = \begin{pmatrix} (\widehat S_x\widehat S_x^T)^{-1} & \mathbf{0}\cr \mathbf{0} & 0\end{pmatrix} .
$$
In this case, from (\ref{eq:AQtSx}) we have (instead of (\ref{eq:QTAQ-nonzero-row}))
\begin{eqnarray*}
AQ\begin{pmatrix} \widehat S_x\cr \mathbf{0}\end{pmatrix}\begin{pmatrix} \widehat S_x^T &  \mathbf{0}\end{pmatrix}
&=& Q\ry \begin{pmatrix} \widehat S_x^T &  \mathbf{0}\end{pmatrix} \begin{pmatrix} \widehat S_x^{T\dagger}\cr \mathbf{0}\end{pmatrix} \begin{pmatrix} \widehat S_x^T &  \mathbf{0}\end{pmatrix} = 
Q\ry \begin{pmatrix} \widehat S_x^T &  \mathbf{0}\end{pmatrix} , \\
A Q \begin{pmatrix} I_{r-1} &  \mathbf{0} \cr \mathbf{0} & 0 \end{pmatrix} &=&  Q \ry \begin{pmatrix} \widehat S_x^T (\widehat S_x\widehat S_x^T)^{-1}&  \mathbf{0}\end{pmatrix} .
\end{eqnarray*}
Only  $A Q(:,1:r-1)$ is given by the data, $A Q(:,1:r-1)=Q\ry \widehat S_x^T (\widehat S_x\widehat S_x^T)^{-1}$, and it is natural to take the test space as the range of $Q(:,1:r-1)$ and the Rayleigh 
quotient 
\begin{equation}\label{eq:QxTAQx-Sx}
Q(:,1:r-1)^T A Q(:,1:r-1)  =Q(:,1:r-1)^T Q\ry \widehat S_x^T (\widehat S_x\widehat S_x^T)^{-1} =\ry(1:r-1,:)\widehat S_x^T (\widehat S_x\widehat S_x^T)^{-1} .
\end{equation}
Each eigenpair $(\lambda,\widehat w)$ of (\ref{eq:QxTAQx-Sx}) yields the corresponding Ritz pair $(\lambda,Qw)$ with $w=\left(\begin{smallmatrix} \widehat w\cr \mathbf{0}\end{smallmatrix}\right)$.

Further on,  $Q_x$ will denote the orthonormal basis of $X$. It is equal to $Q_x = Q(:, 1:r-1)$ if the last row of $\rx$ is zero, or $Q_x = Q$ if the last row is non-zero. Notation $\overline S_x$ will be used to denote the non-zero rows of $\rx$ (i.e. $\overline S_x = \rx$ if the last row is non-zero, and $\overline S_x = \rx(1:r-1,:) $ otherwise). The notation $G_{y,x}$ and $G_x$ from (\ref{eq-cross-products}) will be "reused" to denote the sum of cross products of snapshots in lower dimension, i.e.  $G_{y,x} = \ry \overline S_x^T$ and $G_x = \overline S_x \overline S_x^T$. The DMD algorithm for one-basis version of streaming DMD is given below. 

\begin{algorithm}[H]
\caption{\label{ALG:One_basis-DMD} 
$[Z, \Lambda]$ = \texttt{DMD\_one\_basis}($Q, G_{y,x}, G_x$)}
    \begin{algorithmic}[1]
    \REQUIRE orthonormal $Q$ from  \eqref{eq:S=QtS}; cross product matrices $G_{y,x}, G_x$
    \STATE  $r_x = \texttt{size}(G_x, 1)$; 
    $B = G_{y,x}(1:r_x, :) G_x^{-1}$;
    \STATE $[W, \Lambda] = \texttt{eig}(B)$;
    \STATE $Z = Q(:, 1:r_x)W$.
     \ENSURE $Z$, $\Lambda$.
    \end{algorithmic}
\end{algorithm}

\begin{remark}
{\em
If  $(\rx,\ry)$, where the last row of $\rx$ is zero, is forwarded to Algorithm \ref{ALG:DMD:RRR}, then the matrix of the dominant left singular vectors (corresponding to the nonzero singular values) of $\rx$ has the last row zero and the computed eigenvectors are of the form $w=\left(\begin{smallmatrix} \widehat w\cr \mathbf{0}\end{smallmatrix}\right)$. The formulations (\ref{eq:QTAQ-nonzero-row}), (\ref{eq:QxTAQx-Sx}) are in the spirit of \S \ref{SS=Hemati-review}, tailored for large number of high-dimensional low rank data.
}
\end{remark}

\subsubsection{TQ decomposition}\label{TQ-one_basis}

It is again possible to introduce TQ decomposition of $\overline S_x$ to avoid the squaring of the condition number. With $r_x$ denoting the number of columns in $Q_x$, in both \eqref{eq:QTAQ-nonzero-row} and \eqref{eq:QxTAQx-Sx}, it holds that the Rayleigh-Ritz matrix is equal to $Q_x^T A Q_x 
= \widetilde S_y(1:r_x,:) \overline{S}_x^\dagger$. If $\overline{S}_x = T\widetilde Q^T$ is the TQ decomposition  of $\overline S_x$, and $G_{y,q} = \ry \widetilde Q$, then $Q_x^TAQ_x = G_{y,q}(1:r_x,:) T^{-1}$. Algorithm \ref{ALG:One_basis-DMD} can be called as $[Z, \Lambda] = \texttt{DMD\_one\_basis}(Q, G_{y,q}, T)$. 

\subsection{The Exact DMD vectors and the residuals}\label{SS=OneBasisExactDMDResiduals}
The scheme outlined in \S \ref{SS=LowRankSnapshotsBasisChange} addresses the problem of keeping two separate bases, as discussed in 
\S \ref{SS=DiscussFurtherImprove}. Another important issue raised in \S \ref{SS=DiscussFurtherImprove} are computation of the residuals, the Exact DMD vectors and the KMD. The following assertions are analogous to Proposition \ref{prop:exact-sDMD}.
\begin{proposition}
Let $(\lambda_i, w_i)$, be an eigenpair of $Q_x^T AQ_x= G_{y,x}(1\!\! :\!\! r_x,1\!\! :\!\! r_x)G_x(1\!\! :\!\! r_x, 1\!\! :\!\! r_x)^{-1} = \overline G_{y,x}\overline G_x^{-1}$  where $r_x$ is the rank of $X$. Then the corresponding Exact DMD eigenvector is 
$
\z_i^{(ex)} = Q\ry\rx^\dagger w_i = Q  \overline G_{y,x}\overline G_x^{\dagger}w_i,
$
and $A \z_i^{(ex)} = \lambda_i \z_i^{(ex)}$, where $A=YX^\dagger$. Hence, if $\z_i^{(ex)}\neq \mathbf{0}$ then it is an eigenvector of $A$. 
\end{proposition}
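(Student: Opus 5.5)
The plan is to mirror the proof of Proposition \ref{prop:exact-sDMD}, replacing the two-basis representation (\ref{eq:RRF-XY}) by the one-basis factorization (\ref{eq:S=QtS}). I will use $X=Q\rx$ and $Y=Q\ry$, together with the notation $Q_x=Q(:,1:r_x)$ and $\overline S_x$ (the nonzero rows of $\rx$), so that $X=Q_x\overline S_x$ with $\overline S_x$ of full row rank $r_x$.

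The first step is to express the pseudoinverse. From $X=Q_x\overline S_x$ we get $X^\dagger=\overline S_x^\dagger Q_x^T$. Also $\rx^\dagger=\begin{pmatrix}\overline S_x^\dagger & \mathbf{0}\end{pmatrix}$ when the last row of $\rx$ vanishes, and $\rx^\dagger=\overline S_x^\dagger$ otherwise. Hence $X^\dagger=\rx^\dagger Q^T$ and $A=YX^\dagger=Q\ry\rx^\dagger Q^T$. Moreover $\overline S_x^\dagger=\overline S_x^T(\overline S_x\overline S_x^T)^{-1}=\overline S_x^T G_x^{-1}$. The eigenvector $w_i$ lives in $\R^{r_x}$; I embed it in $\R^r$ by padding with a zero if necessary, exactly as after (\ref{eq:QxTAQx-Sx}), and keep the same notation.

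The second step defines the Exact DMD vector as one power step applied to the Ritz vector $Q_x w_i$, as in the proof of Proposition \ref{prop:exact-sDMD}:
\[
\z_i^{(ex)}=YX^\dagger Q_xw_i=Q\ry\overline S_x^\dagger Q_x^TQ_xw_i=Q\ry\overline S_x^T G_x^{-1}w_i=Q\,G_{y,x}G_x^{-1}w_i .
\]
This equals $Q\ry\rx^\dagger w_i$ with $w_i$ padded. The statement uses the notation $\overline G_{y,x},\overline G_x$ for the blocks indexed by $1:r_x$. I read $\overline G_{y,x}$ as the full $r\times r_x$ product $\ry\overline S_x^T$, since only its top rows enter the Rayleigh quotient while all $r$ rows are needed for the lift by $Q$. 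Because $\overline G_x$ is nonsingular, its pseudoinverse coincides with its inverse.

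The final step checks the eigen-relation:
\[
A\z_i^{(ex)}=Q\ry\overline S_x^\dagger Q_x^T Q\ry\overline S_x^\dagger w_i .
\]
Here $Q_x^TQ\ry=\ry(1:r_x,:)$, so the middle factor $Q_x^TQ\ry\overline S_x^\dagger w_i=\ry(1:r_x,:)\overline S_x^\dagger w_i$ is precisely $(Q_x^TAQ_x)w_i=\lambda_iw_i$ by (\ref{eq:QTAQ-nonzero-row}) and (\ref{eq:QxTAQx-Sx}). This gives $A\z_i^{(ex)}=\lambda_i\z_i^{(ex)}$. If $\z_i^{(ex)}\neq\mathbf{0}$, it is therefore an eigenvector of $A$.

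The main obstacle is purely bookkeeping: handling the two cases (last row of $\rx$ zero or nonzero) uniformly, and correctly identifying the row and column ranges in $\overline G_{y,x}$, namely $r$ rows for the lift versus $r_x$ rows inside the Rayleigh quotient. I would handle both cases at once through the identity $X^\dagger=\overline S_x^\dagger Q_x^T$.
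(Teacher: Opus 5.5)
Your proof is correct and takes the route the paper intends. The paper gives no separate argument for this statement; it only declares it analogous to Proposition \ref{prop:exact-sDMD}, whose proof is exactly your one power step $YX^\dagger Q_x w_i$ followed by recognizing $Q_x^T A Q_x w_i=\lambda_i w_i$ in the middle factor of $A\z_i^{(ex)}$. Your reading of $\overline G_{y,x}$ as the full $r\times r_x$ product $\ry\overline S_x^T$ is also the right one: when the last row of $\rx$ is zero, the top $r_x\times r_x$ block alone would give $\lambda_i Q_x w_i$ and lose the component $\widetilde y_r\widehat S_x^\dagger w_i$ along $q$.
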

\noindent 
Consider now the residuals. The residual $r_i$ of Ritz pair $(\lambda_i, Q_xw_i)$ is equal to 
\begin{equation}\label{rez_ex_Q_not-expanded}
    r_i = AQ_xw_i - \lambda_iQ_x w_i = Q\ry \overline Sx^{\dagger}w_i - \lambda_iQ_xw_i = z_i^{(ex)} - \lambda_i Q_x w_i .
\end{equation}
When a new snapshot does not expand the orthogonal basis for $S$ ($Q = Q_x$), the subspace spanned by $X$ is invariant for the current DMD matrix $A$ since $Q_x^TAQ_x = \ry \rx^\dagger$ and
\[
r_i = AQ_xw_i - \lambda_i Q_x w_i = Q_x \ry \rx^\dagger w_i - Q_x(\lambda_i w_i) = Q_x(Q_x^TAQ_x w_i - \lambda_i w_i) =  \mathbf{0}.
\]
If $Q = (Q_x \ \ q)$ then, in a similar way,
\begin{equation}\label{rez_ex_Q_expanded}
    r_i = Q \ry \widehat S_x^\dagger w_i - \lambda_i Q \left(\begin{smallmatrix}
        w_i \\ 0
    \end{smallmatrix} \right) = Q\begin{pmatrix}
    \ry(1:r-1,:)\widehat S_x^\dagger w_i -\lambda_i w_i
    \\ \ry(r, :) \widehat S_x^\dagger w_i
\end{pmatrix}  = Q\begin{pmatrix}
    \mathbf{0} \\ \widetilde y_{r} \widehat S_x^\dagger w_i 
\end{pmatrix} ,
\end{equation}
where $\widetilde y_{r}$ denotes the last row of $\ry$. 
We have thus proven the following proposition
\begin{proposition}\label{PROP-resids-formulas}
    Let $(\lambda_i, Q_xw_i)$ be the Ritz pair of $A$ via orthonormal basis $Q_x$. Then the norms of the residuals can be computed using representations in the basis $Q$ as
    \begin{equation}\label{eq:residuals_one_basis}
    \norm{r_i}_2 = \begin{cases}
         \|\ry \rx^\dagger w_i - \lambda_i w_i\|_2 = 0, \quad \text{if } Q=Q_x \\
         \|\ry \widehat S_x^\dagger w_i - \lambda_i\left(\begin{smallmatrix}
             w_i \\ \mathbf{0}
         \end{smallmatrix}\right)\|_2 =  \|\widetilde y_r \widehat S_x^\dagger w_i \|_2 , \quad \text{if }\; Q=(Q_x \ q) .
    \end{cases}
    \end{equation}
\end{proposition}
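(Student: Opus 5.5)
The plan is to start from the residual formula \eqref{rez_ex_Q_not-expanded}, $r_i = AQ_xw_i-\lambda_iQ_xw_i$, with $AQ_x$ replaced by its data-driven expression from \eqref{eq:AQtSx}. Both $AQ_xw_i$ and $Q_xw_i$ lie in $\mathrm{range}(Q)$, so $r_i=Q c_i$ for an $r$-vector $c_i$. Since $Q$ is orthonormal, $\|r_i\|_2=\|c_i\|_2$. This reduces the whole statement to identifying $c_i$ in each of the two cases, determined by whether the last row of $\rx$ vanishes.

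\emph{Case $Q=Q_x$} (last row of $\rx$ nonzero). Here $\rx$ has full row rank and $AQ=Q\ry\rx^\dagger$ by \eqref{eq:QTAQ-nonzero-row}. Hence $r_i = Q(\ry\rx^\dagger w_i-\lambda_iw_i)$. Because $(\lambda_i,w_i)$ is an eigenpair of the Rayleigh quotient $\ry\rx^\dagger$, the coefficient vector is zero. This gives the first line of \eqref{eq:residuals_one_basis}, and $\mathrm{range}(Q_x)$ is $A$-invariant.

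\emph{Case $Q=(Q_x\ q)$} (last row of $\rx$ zero, $\rx=\left(\begin{smallmatrix}\widehat S_x\cr \mathbf 0\end{smallmatrix}\right)$). From the block computation preceding \eqref{eq:QxTAQx-Sx}, $AQ_x = Q\ry\widehat S_x^T(\widehat S_x\widehat S_x^T)^{-1}=Q\ry\widehat S_x^\dagger$, using that $\widehat S_x$ has full row rank. Also $Q_xw_i = Q\left(\begin{smallmatrix} w_i\cr 0\end{smallmatrix}\right)$. Therefore
$c_i=\ry\widehat S_x^\dagger w_i-\lambda_i\left(\begin{smallmatrix} w_i\cr 0\end{smallmatrix}\right)$,
which is the middle expression in the second line of \eqref{eq:residuals_one_basis}. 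Next I split $c_i$ into its first $r-1$ entries and its last entry. The top block is $\ry(1:r-1,:)\widehat S_x^\dagger w_i-\lambda_iw_i$, and it vanishes because $\ry(1:r-1,:)\widehat S_x^\dagger$ is exactly the Rayleigh quotient \eqref{eq:QxTAQx-Sx} whose eigenpair is $(\lambda_i,w_i)$. The last entry is $\widetilde y_r\widehat S_x^\dagger w_i$, and this gives the final equality.

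The only delicate point is bookkeeping, not analysis. I have to make sure that the eigenpair in the statement refers to the Rayleigh quotient on $Q_x$, namely \eqref{eq:QTAQ-nonzero-row} or \eqref{eq:QxTAQx-Sx} depending on the case. I also have to justify $AQ_x=Q\ry\widehat S_x^\dagger$, i.e., that $\widehat S_x^T(\widehat S_x\widehat S_x^T)^{-1}=\widehat S_x^\dagger$, which holds by full row rank. With those identifications in place, unitary invariance of the $2$-norm finishes the argument.
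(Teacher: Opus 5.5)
Your proposal is correct and follows essentially the same route as the paper. The paper likewise writes $r_i = Q c_i$, reads off $c_i$ in the two cases as in \eqref{rez_ex_Q_not-expanded} and \eqref{rez_ex_Q_expanded}, uses the Rayleigh-quotient eigenrelation \eqref{eq:QTAQ-nonzero-row} or \eqref{eq:QxTAQx-Sx} to cancel everything except the last entry $\widetilde y_r \widehat S_x^\dagger w_i$, and concludes by unitary invariance of the $2$-norm.
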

\begin{proof}
    The proof is obtained by taking the norm in  equations \eqref{rez_ex_Q_not-expanded} and \eqref{rez_ex_Q_expanded} and using the fact that 2-norm is invariant to orthogonal transformations. 
\end{proof}

\begin{example}
Figure \ref{fig:vorticity_residuals} illustrates the formulas from Proposition \ref{PROP-resids-formulas} using the vorticity data (Example \ref{EX:Cylinder-kappaG}) with streaming snapshots, starting with $n_0 = 20$. 

\begin{SCfigure}[2][!htb]
        \includegraphics[width=0.50\linewidth, height=0.4\linewidth
        ]{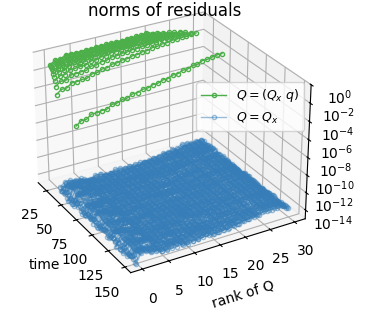}
    \caption{\\
    If a new snapshot contributes with a new direction, the basis is expanded and the second formula in (\ref{eq:residuals_one_basis}) applies -- the residuals are not necessarily small. Otherwise, an invariant subspace is reached and all Ritz vectors are exact eigenvectors of the current DMD matrix -- the first formula in (\ref{eq:residuals_one_basis}) applies and the computed (floating-point) residuals are at the level of the roundoff.}
    \label{fig:vorticity_residuals}
\end{SCfigure}
\end{example}

\subsection{Updating}\label{SS:hemati_one_basis_updating}

 Assume new snapshot $s_{new}$ becomes available and we have the $S = Q\widetilde S$ decomposition for all snapshots obtained beforehand. First we check if $s_{new}$ is already in the subspace spanned by $Q$. If not, we expand $Q$ to $Q_{new} = \begin{pmatrix}
     Q & q
 \end{pmatrix}$ using Gram-shmidt procedure (possibly with reorthogonalization). 
 If it is already in the subspace spanned by $Q$, then $Q_{new} = Q$. In both cases $Q_{x,new}$ is the previous $Q$ and the representation of $s_{new}$ in basis spanned by columns of $Q_{new}$ is $\widetilde s_{new} = Q_{new}^Ts_{new}$.

 The updates are similar as to what was proposed in section \ref{S=Modifications} therefore only the algorithms follow. 
 Notice that in one basis scenario with low rank constraints it is not necessary to update $G_y$ when working with $G_x = XX^T$ nor perform TQ decomposition of $\ry$ when working with TQ decompositions. That is due to the fact that as $Q$ reaches critical number of columns $\widehat r$ in the beginning of next iteration, after (only) $G_x$ (or $T$, depending on the method) is updated, it contains all the information on snapshots spanning orthonormal basis $Q$ and it can easily be updated.
  \vspace{-2mm}
 \begin{algorithm}[ht]
\caption{\label{ALG:One_basis-add} 
$[Q, G_{y,x}, G_x, \widetilde x]$ = 
\texttt{Addx\_one\_basis}($Q, G_{y,x}, G_x, \x, \widetilde x_{-1}, \tol{}_1, \tol{}_2, \tol{}_3, \widehat r$ )}
    \begin{algorithmic}[1]
    \REQUIRE orthonormal $Q$ from \eqref{eq:S=QtS}; cross products $G_{y,x}, G_x$ from section \ref{SS:hemati_one_basis_updating}; new snapshot  $\x$; last low-rank snapshot $\widetilde x_{-1}$ from; tolerances $\tol{}_1, \tol{}_2, \tol{}_3$; maximal dimension $\widehat r$
    \ENSURE  Updated matrices $Q, G_{y,x}, G_x$, low-rank representation of new snapshot $\widetilde x$ 
    \IF{$\texttt{size}(Q, 2) > \texttt{size}(G_x,1)$} 
    \STATE $G_{y,x} = \left(\begin{smallmatrix}
            G_{y,x} & \mathbf{0}
        \end{smallmatrix}\right)$; $G_x = \left(\begin{smallmatrix}
        G_x  & \mathbf{0} \cr \mathbf{0} & 0
    \end{smallmatrix}\right)$  
    \ENDIF
    \STATE $G_x = G_x + \widetilde x_{-1}\widetilde x_{-1}^T$ 
    \IF{$\texttt{size}(Q, 2) = \widehat r$}  
    \STATE $[\Omega, \Theta] = \texttt{eig}(G_x)$; $\rho = \max \{k : \Theta_{kk} > \tol_3\Theta_{11} \}$;
    \STATE $Q = Q\Omega(:, 1:\rho)$; $G_x = \Theta(1:\rho, 1:\rho)$ ;
    \STATE $\widetilde x_{-1} = \Omega(:, 1:\rho)^T\widetilde x_{-1}$
    \ENDIF
    \STATE $[\q, \widetilde x, \gamma]$ = \texttt{GS\_update}($Q, \x, \tol{}_1, \tol{}_2$);
    \IF{$\gamma > 0$}
    \STATE $Q = \left(\begin{smallmatrix}
            Q & \q
        \end{smallmatrix}\right)$;
    $G_{y,x} = \left(\begin{smallmatrix}
            G_{y,x} \cr \mathbf{0}
        \end{smallmatrix}\right)$; $\widetilde x = \left( \begin{smallmatrix}
            \widetilde x \\ \gamma
        \end{smallmatrix}\right)$ 
    \ENDIF
    \STATE $G_{y,x} = G_{y,x} + \widetilde x \widetilde x_{-1}^T$.
    \end{algorithmic}
\end{algorithm}
 \vspace{-3mm}
 \begin{algorithm}[ht]
\caption{\label{ALG:One_basis-add-TQ} 
$[Q, G_{y,q}, T, \widetilde x]$ = 
\texttt{AddxTQ\_one\_basis}($Q, G_{y,q}, T, \x, \widetilde x_{-1}, \tol{}_1, \tol{}_2, \tol{}_3$, tri, $\widehat r$)}
    \begin{algorithmic}[1]
    \REQUIRE orthonormal $Q$ from \eqref{eq:S=QtS}; cross products $G_{y,q}, T$ as described in section \ref{SS:hemati_one_basis_updating}; new snapshot  $\x$; last low-rank snapshot $\widetilde x_{-1}$ from; tolerances $\tol{}_1, \tol{}_2, \tol{}_3$; character tri indicating if $T$ is upper (\texttt{'U'}) or lower (\texttt{'L'}) triangular; maximal dimension $\widehat r$; 
    \ENSURE  Updated matrices $Q, G_{y,q}, T$, low-rank representation of new snapshot $\widetilde x$ 
    \STATE $r = \texttt{size}(Q, 2)$; $r_x = \texttt{size}(T, 1)$
    \IF{$r > r_x$}
    \STATE $T = \left( \begin{smallmatrix}
        T \\ \mathbf{0}
    \end{smallmatrix}\right)$
    \ENDIF
    \STATE $T = \left( \begin{smallmatrix}
        T & \widetilde x_{-1}
    \end{smallmatrix}\right)$    
    \IF{$r= \widehat r$ and $r>r_x$}  
    \STATE $[U, \Sigma, V] = \texttt{svd}(T)$; $\rho = \max\{ k: \Sigma_{kk} > \tol_3 \Sigma_{11}\}$; 
    \STATE $Q = QU(:, 1:\rho)$; $G_{y,q} = U(:, 1:\rho)^T G_{y,q }V(:, 1:\rho)$; $T = \Sigma(1:\rho, 1:\rho)$;
    \STATE $\widetilde x_{-1} = T(:, end)$
    \ENDIF
    \STATE $[\q, \widetilde x, \gamma] = $\texttt{GS\_update}$(Q, x, \tol{}_1, \tol{}_2)$
    \IF{$\gamma > 0$}
    \STATE $Q = \left( \begin{smallmatrix}
        Q & \q
    \end{smallmatrix}\right)$; $\widetilde x = \left( \begin{smallmatrix}
        \widetilde x \\ \gamma
    \end{smallmatrix}\right)$
    \ENDIF
    \STATE $[G_{y,q}, T] = \texttt{updateYQT}(G_{y,q}, T(:, 1:end-1), \mathrm{tri},  \widetilde x_{-1}, \widetilde x, 0, \gamma)$. 
    \end{algorithmic} 
\end{algorithm}

\subsection{KMD and forecasting}\label{S=SingleBasisForecasting}

Use of one basis instead of two circumvents the usage of exact DMD vectors in reconstruction and subsequently forecasting. The problem from equation \eqref{eq:ynDMDz} in the case of $Q = Q_x$ is
\begin{equation}\label{eq:forecasting_one_basis_not-expanded}
    \|Q_x \widetilde y_n - \sum_{i=1}^\ell \alpha_i^{(n)}Q_x w_i\|_2 = \|\widetilde y_n - \sum_{i=1}^\ell \alpha_i^{(n)} w_i\|_2,
\end{equation}
or, in the case of $Q = (Q_x \ q)$,
\begin{equation}\label{eq:forecasting_one_basis_expanded}
    \|\begin{pmatrix}
        Q_x & q 
    \end{pmatrix} \, \widetilde y_n - \sum_{i=1}^\ell \alpha_i^{(n)}\begin{pmatrix}
        Q_x & q
    \end{pmatrix} \begin{pmatrix}
        w_i \\ 0
    \end{pmatrix}\|_2 = \|\widetilde y_n - \sum_{i=1}^\ell \alpha_i^{(n)} \begin{pmatrix}
        w_i \\ 0
    \end{pmatrix}\|_2 .
\end{equation}
Using $\widetilde y_n^T = \left(g_n^T \  \gamma_n \right)^T$ from \texttt{GS\_update} (algorithm \ref{ALG:GS-Reorthog}), the minimization problem from the expanded basis case \eqref{eq:forecasting_one_basis_expanded} is equal to minimizing $\|g_n - \sum_{i}^\ell \alpha_i^{(n)}w_i\|_2$. In both cases, KMD is done in the subspace of dimension \texttt{size}$(Q_x, 1)$ instead of $m$. 

\subsubsection{Example: Lifted hydrogen jet flame (BLAST dataset)}

Lifted hydrogen jet flame data from BLASTnet database was used with Reynolds numbers $5000$ and $7000$ (\cite{chung_wai_tong_2023_8034232}, \cite{chung2023turbulence}). Only velocities $u_x$, $u_y$ on a grid $1100 \times 600$ were used, resulting in snapshots of length $m=1320000$. Starting from $n_0 = 20$ snapshots, reconstruction with rounding to single precision as described in section \ref{SSS:example-vorticity} was done, this time in one orthogonal basis. For $Re = 5000$, the norm of errors for reconstruction $k$ steps ahead ($k = 1,2,3,4,5$) were the same (around $10^{-2}$).  In the case $Re = 5000$ we used formulas \eqref{eq:forecasting_one_basis_not-expanded} and \eqref{eq:forecasting_one_basis_expanded} for reconstruction to compare the time saving when extracting orthonormal basis. The results are shown in figure  \ref{fig:blast-error-single-onebas}.  In each case we used all previous snapshots for reconstruction. After $140$ snapshots were added ($n=n_0+140=160$), time it took to predict $5$ steps ahead was over $50$ seconds in full space and only around $5$ seconds in the orthonormal basis, independent of whether used DMD modes and eigenvalues came from TQ or HWR approach.
When using data with $Re = 7000$, the first attempt to round $G_x$ to single precision triggered overflows, meaning that any method based on $G_x$ must fail in single precision. On the other hand, TQ was successfully updated with all snapshots and had relative error norms around $10^{-1}$ for prediction one step ahead.
\begin{SCfigure}[2][!htb]
        \includegraphics[width=0.5\textwidth]{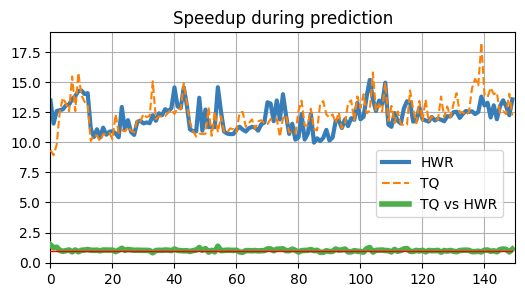}
    \caption{\\
    Speedup when running predictions 5 steps ahead in basis spanned by $Q$ or in full space $\mathbb R^m$ for both HQR-sDMD and TQ-sDMD. Extracting orthonormal basis and performing KMD in a smaller subspace significantly reduces run time resulting in algorithms on average 11 times faster. There is no notable run time difference when performing prediction in smaller subspace with DMD modes and eigenvalues obtained by TQ-sDMD vs. HWR-sDMD.}\label{fig:blast-error-single-onebas}
\end{SCfigure}
\subsubsection{Example: Gray-Scott model}\label{SSS:Gray-Scott}

For another example in one basis we generate snapshots using Gray-Scott model of reaction-diffusion system with parameters $D_u = 1, D_v=0.5, F=0.062, k = 0.061$ initialized with three randomly placed squares on a $350 \times 350$ grid ($m=122500$). Starting with $n_0 = 30$, we add up to $300$-th snapshot. After each new snapshot we make a prediction up to 5 steps ahead using only the last $30$ snapshots in reconstruction. The predicted snapshots after observing $n=200$ snapshots for HWR-sDMD in double precision and TQ-sDMD in simulated single is shown in figure \ref{fig:gray-scott-visualization-5}. Norms of all prediction errors are calculated and shown in figure \ref{gray-scott}, including for HWR-sDMD in single precision. Note that using HWR-sDMD in single precision produced \texttt{NaN}s in $144$ predicted snapshots when predicting one step ahead and in $191$ predicted snapshots when predicting five steps ahead.

\begin{figure}[h]
    \centering
    \includegraphics[width=1\linewidth]{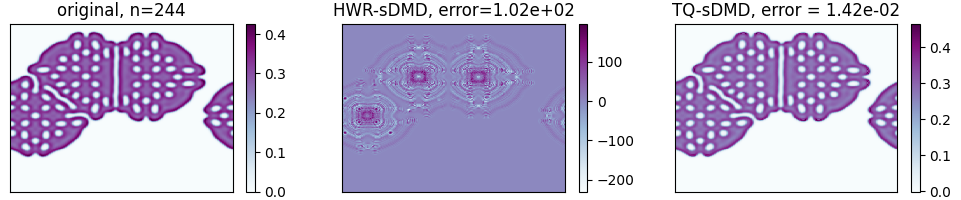}
    \caption{Visualization of predicted dynamics of the Gray-Scott model, using HWR-sDMD (64) and TQ-sDMD (32) five steps ahead after $n=200$ observed snapshots.}
    \label{fig:gray-scott-visualization-5}
\end{figure}
\begin{figure}[h]
    \centering
    \begin{subfigure}[t]{0.49\textwidth}
        \centering
        \includegraphics[width=1\linewidth]{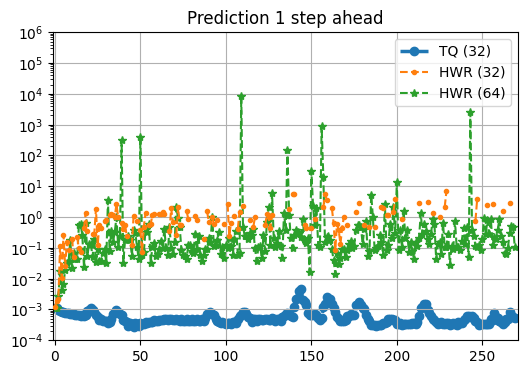}
    \end{subfigure}
    \hfill
    \begin{subfigure}[t]{0.49\textwidth}
        \centering
        \includegraphics[width=1\linewidth]{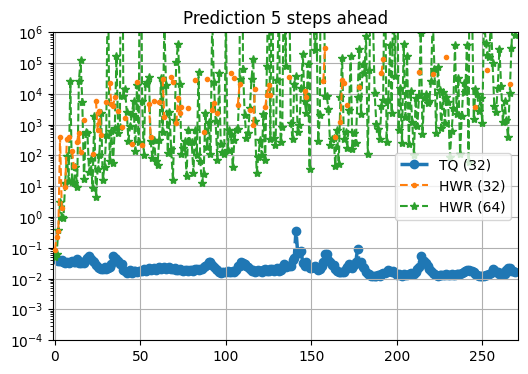}
    \end{subfigure}
    \caption{\textit{Left:} When predicting one step ahead, TQ-sDMD in simulated single basis maintains relative error norms around $10^{-3}$. Meanwhile, the errors for HWR-sDMD in both simulated single and fully double precision result in errors between $0.1$ and $1$ with occasional oscillation to larger errors. In HWR-sDMD (32) $144$ error norms were \texttt{NaN}. Error norms for TQ-sDMD in double precision are omitted since they coincide with errors when working in single precision. \textit{Right:} When predicting 5 steps ahead, all errors are larger, but TQ-sDMD errors stay almost always under $0.1$, while both HWR-sDMD errors stay above $10$ after first few iterations.  HWR-sDMD (32) error norms contains $191$ \texttt{NaN}s.}\label{gray-scott}
\end{figure}

%% file: sections/zhang_updates.tex
\graphicspath{{\subfix{../}}}

\newcommand{\G}{\ensuremath{\widehat G}}
\newcommand{\dl}{\ensuremath{d_L}}

\section{Online DMD for long low dimensional trajectory ($m \ll n$)}\label{S=Zhang-New}

Assume the data is such that $m<n$ and that $X$ is of full (row) rank. Then $A = YX^\dagger$ is the unique solution of least squares equation for DMD. Since $X$ is full rank, $XX^\top$ is invertible and  $X^\dagger = X^\top\left(XX^\top \right)^{-1}$. Inserting that into $A = YX^\dagger$ gives 
$A = \left(Y X^\top\right)\left(XX^\top \right)^{-1} = \G_{y,x} \G_x^{-1}$.
\subsection{Sherman-Morrison updates of $\G_x^{-1}$}
Due to the assumption of full row rank, the updates of $\G_x$, $\G_{y,x}$ are always as in \S \ref{SS=AddNewData}, case (2) (relation (\ref{eq:XmewXnewT-2})). Instead of updating $\G_{x}$ and then inverting it when needed, Zhang et al. \cite{zhang_DMD} used Sherman-Morrison formula for direct updates of $\G_{x}^{-1}$ after rank-one additive change of $\G_x$:
\begin{equation}\label{eq:Sherman-Morrison-G}
    \G_{x,new}^{-1} = ({\G_x} + x_{new}x_{new}^\top)^{-1} = \G_x^{-1} - \frac{\G_x^{-1} x_{new} x_{new}^\top \G_x^{-1}}{1+x_{new}^\top \G_x^{-1} x_{new}}.
\end{equation}
Using this in $A_{new} = Y_{new}X_{new}^\dagger$ with $X_{new}=(X\; x_{new})$, $Y_{new}=(Y\; y_{new})$ and $P_x= \G_x^{-1}$, \cite{zhang_DMD} derived an explicit update formula for the DMD matrix:
\begin{equation}\label{eq:A-new-Zhang}
A_{new} = A + \gamma (y_{new} - A x_{new})x_{new}^T P_x,\;\;\gamma = 1/(1+x_{new}^T P_x x_{new}).
\end{equation}
For the next step, $P_x$ is updated, using (\ref{eq:Sherman-Morrison-G}), as 
\begin{equation}\label{eq:Pxnew}
P_{x,new}=P_x - \gamma z z^T, \;\;z = P_x x_{new}
\end{equation}
Note that this updating scheme keeps updating the inverse $P_x=\G_x^{-1}$ of the positive definite (Gram) matrix $\G_x=XX^T$ whose condition number is the squared condition number of the raw data matrix $X$.  Unlike the update by augmentation as in \S \ref{SS:UseUpdatedCholFactor}, here we have persistent rank-one (positive semi-definite) additive correction, meaning $G_{x,new}\succeq G_x$ (in the sense of the Loewner partial ordering: $G_{x,new}- G_x$ is positive semidefinite). This implies that $P_x \succeq P_{x,new}$, which is clearly seen in (\ref{eq:Sherman-Morrison-G}), and this means that, in the process of
repeated updating $P_x\leftarrow P_{x,new}$ in floating-point arithmetic, the matrices $P_x$ can 
lose numerical definiteness. In fact, this mishap can occur in the very first step when $P_x$ is initialized as explicitly computed inverse of $XX^T$. 

\subsubsection{Extension to HWR-sDMD ($m>n$)}\label{S6=SMforNgtM}

Assume the original setting from HWR-sDMD where $X = Q_x \widetilde X, \; Y = Q_y\widetilde Y$. Instead of updating $G_x$ and inverting it, it is possible to directly update $G_x^{-1}$. In the scenario where the new snapshot does not expand $Q$ the updated matrix $G_{x,new} 
= G_x + \widetilde x \widetilde x^T$ and Sherman-Morrison formula \eqref{eq:Sherman-Morrison-G} can be used.
If the new snapshot expands the matrix $Q$, 
then $
    G_{x,new} = \left(\begin{smallmatrix}
        G_x & \mathbf{0} \\ \mathbf{0} & 0
    \end{smallmatrix} \right) + \widetilde x\widetilde x^T$.
Updates are less direct in this case since matrix $\left(\begin{smallmatrix}
        G_x & \mathbf{0} \\ \mathbf{0} & 0
    \end{smallmatrix} \right)$ is singular and does not have an inverse. It does however have a pseudoinverse $\left( \begin{smallmatrix}
         G_x^{-1} & \mathbf{0} \\ \mathbf{0} & 0
    \end{smallmatrix}\right)$.
Back in 1973, 
Meyer \cite{meyer_update_pinv} proved the following theorem.
\begin{theorem}\label{THM:pinv-updates}
    Let $M$ be $l_1 \times l_2$ matrix, and $c, d$ vectors of length $l_1$ and $l_2$ respectively. If $c \notin \mathcal{R}(M)$ and $d \notin \mathcal{R}(M^T)$, then
      $(M + cd^T)^\dagger = M^\dagger - M^\dagger c u^\dagger - v^\dagger d^TM^\dagger + (1+d^TM^\dagger c)v^\dagger u^\dagger$, 
    with $u = (I - MM^\dagger)c$, $v = d^T(I-M^\dagger M)$
\end{theorem}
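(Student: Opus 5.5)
The plan is to verify the candidate formula directly against the four Penrose conditions. Write $N = M+cd^T$ and let $G$ denote the proposed right-hand side. Since the Moore--Penrose inverse is unique, it suffices to show
\[
NGN=N,\qquad GNG=G,\qquad (NG)^T=NG,\qquad (GN)^T=GN.
\]
The natural route is to compute the products $NG$ and $GN$ explicitly and show that they equal the orthogonal projectors onto $\mathcal{R}(N)$ and $\mathcal{R}(N^T)$, respectively.

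First I collect the elementary identities that come from the hypotheses. Because $c\notin\mathcal{R}(M)$, the vector $u=(I-MM^\dagger)c$ is nonzero, so $u^\dagger = u^T/(u^Tu)$. Similarly, $d\notin\mathcal{R}(M^T)$ gives $v\neq 0$ and $v^\dagger = v^T/(vv^T)$. From $MM^\dagger M = M$ and the symmetry of $MM^\dagger$ and $M^\dagger M$ one obtains
\[
u^TM=0,\quad M^\dagger u=0,\quad Mv^T=0,\quad vM^\dagger=0,
\]
and, in the form used repeatedly, $u^\dagger M=0$, $M^\dagger (u^\dagger)^T=0$, $Mv^\dagger=0$, $v^\dagger M^\dagger=0$. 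Further, $u^\dagger c = u^\dagger u = 1$, since $c-u\in\mathcal{R}(M)$ is orthogonal to $u$, and likewise $d^Tv^\dagger = vv^\dagger = 1$.

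Next I expand $NG=(M+cd^T)G$ term by term. The terms $Mv^\dagger$ vanish, and writing $MM^\dagger c = c-u$ collapses the expression. I expect to arrive at
\[
NG = MM^\dagger + uu^\dagger,
\]
which is symmetric and is the orthogonal projector onto $\mathcal{R}(M)\oplus\mathrm{span}(u)$. By the symmetric computation,
\[
GN = M^\dagger M + v^\dagger v .
\]
The hypotheses are what allow the scalar coefficient $(1+d^TM^\dagger c)$ to cancel exactly: the term it multiplies, $cd^Tv^\dagger u^\dagger = c\,u^\dagger$, must cancel against contributions coming from $-cd^TM^\dagger c\,u^\dagger$, from $-MM^\dagger c\,u^\dagger$, and from the $cd^TM^\dagger$ pieces. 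Tracking these scalars is the main bookkeeping step, and it is where errors are most likely.

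Finally, I check $NGN=N$ as $(MM^\dagger+uu^\dagger)(M+cd^T)$. This reduces to $M + (MM^\dagger c + u)d^T = M+cd^T$, using $u^\dagger M=0$ and $u^\dagger c=1$. The condition $GNG=G$ is checked analogously, multiplying each term of $G$ by $M^\dagger M+v^\dagger v$ and using $v M^\dagger = 0$, $v v^\dagger = 1$ and $M^\dagger M M^\dagger=M^\dagger$. The main obstacle is the algebra in the expansion of $NG$ and $GN$; once those projector forms are established, the remaining Penrose conditions follow in a line or two each.
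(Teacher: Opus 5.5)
The paper does not prove this theorem; it quotes it from Meyer's 1973 paper, so there is no in-paper argument to compare against. Your plan of verifying the four Penrose conditions directly is correct and self-contained, and it is the standard way to establish a closed-form expression like this.

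\textbf{The two key products.} Write $\beta = 1+d^TM^\dagger c$. Then:
\begin{itemize}
\item $MG = MM^\dagger - MM^\dagger c\,u^\dagger$, because $Mv^\dagger = 0$.
\item $cd^TG = cd^TM^\dagger - (\beta-1)\,cu^\dagger - cd^TM^\dagger + \beta\,cu^\dagger = cu^\dagger$, because $d^Tv^\dagger = 1$.
\item Hence $NG = MM^\dagger + (I-MM^\dagger)cu^\dagger = MM^\dagger + uu^\dagger$.
\item Symmetrically, $GM = M^\dagger M - v^\dagger d^TM^\dagger M$, using $u^\dagger M = 0$.
\item $Gcd^T = v^\dagger d^T$, using $u^\dagger c = 1$.
\item Hence $GN = M^\dagger M + v^\dagger v$.
\end{itemize}
The conditions $NGN = N$ and $GNG = G$ then follow exactly as you describe.

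\textbf{Bookkeeping correction.} The scalar $\beta$ does not cancel outright, as your sketch suggests. The term $\beta\,cu^\dagger$ combines with $-(d^TM^\dagger c)\,cu^\dagger$ to leave exactly $cu^\dagger$. That term then pairs with $-MM^\dagger c\,u^\dagger$ to produce $uu^\dagger$. Separately, the two $cd^TM^\dagger$ terms cancel each other.

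\textbf{Notational slip.} The identity ``$v^\dagger M^\dagger = 0$'' is not dimensionally defined, since $v^\dagger$ is a column of length $l_2$ and $M^\dagger$ is $l_2 \times l_1$. What the $GNG$ check actually uses is $vM^\dagger = 0$ and $vv^\dagger = 1$, together with $Mv^\dagger = 0$ to kill $M^\dagger M v^\dagger$. You already listed all of these.

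\textbf{By-product of your route.} It exhibits $NG$ and $GN$ as the orthogonal projectors onto $\mathcal{R}(M)\oplus\mathrm{span}(u)$ and $\mathcal{R}(M^T)\oplus\mathrm{span}(v^T)$. This shows that under the stated hypotheses $\mathrm{rank}(M+cd^T) = \mathrm{rank}(M)+1$. It also shows that no condition on $\beta$ is needed, unlike in Meyer's other cases.
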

\noindent If we denote $\widehat G_x = \left(\begin{smallmatrix}
        G_x & \mathbf{0} \\ \mathbf{0} & 0
    \end{smallmatrix} \right)$, then $M = \widehat G_x$, $c = d = \widetilde x = \begin{pmatrix}
        g_x^T & \gamma_x
    \end{pmatrix}^T$ and 
\begin{equation}
    u = (I - \widehat G_x \widehat G_x^\dagger) \widetilde x = \begin{pmatrix}
        \mathbf{0}_{nxn} & \mathbf{0} \\ \mathbf{0} & 1
    \end{pmatrix} \begin{pmatrix}
        g_x \\ \gamma_x
    \end{pmatrix} = \begin{pmatrix}
        \mathbf{0} \\ \gamma_x
    \end{pmatrix}; \; \; v = \widetilde x^T (I- \widehat G_x^\dagger \widehat G_x) = u^T = \begin{pmatrix}
        \mathbf{0} & \gamma_x
    \end{pmatrix}.
\end{equation}
Using $u^\dagger = v/\gamma_x^2$, $v^\dagger = u/\gamma
_x^2$, it is easy to show
\begin{proposition} Assume $G_x^{-1}$ as above is available and let $X$ denote data matrix of snapshots received so far. If a new snapshot pair $(x_{new}, y_{new})$ becomes available where $x_{new} \notin \mathcal{R}(X)$, then $G_{x,new}^{-1}$ can be computed as
    \begin{equation}\label{Meyer}
        G_{x,new}^{-1} = \begin{pmatrix}
        G_x^{-1} & -\frac{G_x^{-1}g_x}{\gamma_x} \\
        -\frac{g_x^TG_x^{-1}}{\gamma_x} & \frac{1+g_x^TG_x^{-1}g_x}{\gamma_x^2}
    \end{pmatrix}.
    \end{equation}
\end{proposition}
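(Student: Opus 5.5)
The plan is to apply Theorem \ref{THM:pinv-updates} with $M=\widehat G_x$ and $c=d=\widetilde x=(g_x^T\ \gamma_x)^T$, and then read off the blocks. First I check the hypotheses. Since $x_{new}\notin\mathcal{R}(X)$, Algorithm \ref{ALG:GS-Reorthog} returns $\gamma_x>0$. Because $\mathcal{R}(\widehat G_x)=\mathcal{R}(\widehat G_x^T)$ is the span of the first $r_x$ coordinate vectors, the nonzero last entry $\gamma_x$ gives $\widetilde x\notin\mathcal{R}(M)$ and $\widetilde x\notin\mathcal{R}(M^T)$.

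I will also use the facts already recorded before the statement. These are $M^\dagger=\left(\begin{smallmatrix} G_x^{-1} & \mathbf{0}\\ \mathbf{0} & 0\end{smallmatrix}\right)$, $u=(\mathbf{0}^T\ \gamma_x)^T$, $v=u^T$, and $u^\dagger=v/\gamma_x^2$, $v^\dagger=u/\gamma_x^2$. Both $u^\dagger$ and $v^\dagger$ are the pseudoinverses of rank-one vectors, computed as $u^T/\|u\|_2^2$.

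Next I evaluate the four terms of Meyer's formula one at a time.
\begin{itemize}
\item $M^\dagger c=\left(\begin{smallmatrix}G_x^{-1}g_x\\0\end{smallmatrix}\right)$.
\item $M^\dagger c\,u^\dagger=\frac{1}{\gamma_x}\left(\begin{smallmatrix}\mathbf{0} & G_x^{-1}g_x\\ \mathbf{0} & 0\end{smallmatrix}\right)$, because $u^\dagger=(\mathbf{0}^T\ 1/\gamma_x)$.
\item By symmetry, $v^\dagger d^TM^\dagger=\frac{1}{\gamma_x}\left(\begin{smallmatrix}\mathbf{0} & \mathbf{0}\\ g_x^TG_x^{-1} & 0\end{smallmatrix}\right)$.
\item $d^TM^\dagger c=g_x^TG_x^{-1}g_x$.
\item $v^\dagger u^\dagger=\frac{1}{\gamma_x^2}e_{r_x+1}e_{r_x+1}^T$.
\end{itemize}
Summing these with the signs in Theorem \ref{THM:pinv-updates} gives exactly the block matrix \eqref{Meyer}.

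Finally, I note that $G_{x,new}=\widehat G_x+\widetilde x\widetilde x^T$ is invertible. By \eqref{eq:XmewXnewT-1} it equals $\widetilde X_{new}\widetilde X_{new}^T$, and $\widetilde X_{new}$ has full row rank when $\gamma_x\neq0$. So the pseudoinverse just computed is the inverse.

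As a sanity check, which is optional and independent of Meyer's theorem, I would multiply $G_{x,new}=\left(\begin{smallmatrix}G_x+g_xg_x^T & \gamma_x g_x\\ \gamma_x g_x^T & \gamma_x^2\end{smallmatrix}\right)$ by the claimed matrix and verify that the product is the identity blockwise. There is no real obstacle here; the only point needing care is confirming the range hypotheses and the invertibility, so that "$\dagger$" may be replaced by "$^{-1}$".
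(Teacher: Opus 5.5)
Your proposal is correct and follows the paper's own route. You apply Theorem~\ref{THM:pinv-updates} with $M=\widehat G_x$ and $c=d=\widetilde x$, justify the range hypotheses by the zero last row of $\widehat G_x$ together with $\gamma_x\neq 0$, and use $u^\dagger=v/\gamma_x^2$, $v^\dagger=u/\gamma_x^2$; your term-by-term assembly and your observation that $G_{x,new}=\widetilde X_{new}\widetilde X_{new}^T$ is nonsingular (so the pseudoinverse is the inverse) make explicit what the paper leaves as ``easy to show.''
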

\begin{proof}
    Since the last row of $\widehat G_x$ are zeros and $\gamma_x \neq 0$, there cannot exist a vector $v$ such that $\widehat G_x v = \widetilde x$. Therefore, theorem \ref{THM:pinv-updates} can be applied.
\end{proof}

\subsection{Updating Cholesky factor of $\widehat G_x$}\label{SS=NewZhang-CholeskyUpdate}
It is known that the Sherman-Morrison formula can exhibit erratic behavior \cite{ma2025notestabilityshermanmorrisonwoodburyformula}.
In the case of the online DMD, this means that computed $P_x$ can lose the positive definiteness, which makes the computed results uninterpretable, in particular not in the backward error sense. 

We resolve this problem by recalling the well known technique of using positive (semi)definite matrices implicitly. In this concrete case, it means updating the Cholesky factor of $G_x$ instead of updating its inverse. 
If $\widehat G_x = R_x^T R_x$ is the Cholesky factorization, then the updating formula (\ref{eq:A-new-Zhang}) can be used, but $z=P_x x_{new}$ is computed as $R_x^{-1}(R_x^{-T} x_{new})$ (by solving two triangular systems). The \emph{flop} count of solving two triangular systems is $2m^2$, which is nearly the same as for one matrix-vector multiplications ($2m^2-m$). Having $z$, the cost of computing $A_{new}$ in (\ref{eq:A-new-Zhang}) is $4m^2+3m+1$ \emph{flops}.

At the beginning, the upper triangular factor $R_x$ is computed by the QR factorization of the tall $n\times m$ matrix $X^T$. The cost of this is $2nm^2-2m^3/3$. On the other hand, the cost of computing $G_x=XX^T$, using symmetry, is $nm^2+nm$, and (\ref{eq:A-new-Zhang}) requires initial computation of $P_x=\G_x^{-1}$ which amounts $m^3$ flops (computing the Cholesky factor, inverting it and multiplying triangular factors, and using symmetry). Initially, $m$ is close to $n$ -- the snapshots are collected until $X$ becomes full row rank. Hence, if the first step is done, e.g., with $n=m$ then the overhead is smaller. 

The key operation in the online phase is updating the Cholesky factor of $\G_x$. 
Since 
$$
\G_{x,new} = (X\; x_{new}) (X\; x_{new})^T = \G_x + x_{new}x_{new}^T = \begin{pmatrix} R_x^T &  x_{new}\end{pmatrix} \begin{pmatrix} R_x \cr x_{new}^T\end{pmatrix} ,
$$
the Cholesky factor $R_{x,new}$ of $\G_{x,new}$ is computed implicitly by the QR factorization
\begin{equation}\label{eq:Rnew-scheme}
G_m\cdots G_2 G_1 \begin{pmatrix} R_x \cr x_{new}^T\end{pmatrix} = G_m\cdots G_2 G_1
\left( \begin{smallmatrix} 
* & * & * & * \cr
0 & * & * & * \cr
0 & 0 & * & * \cr
0 & 0 & 0 & * \cr
+ & + & + & +
\end{smallmatrix}\right) = \left( \begin{smallmatrix} 
* & * & * & * \cr
0 & * & * & * \cr
0 & 0 & * & * \cr
0 & 0 & 0 & * \cr
0 & 0 & 0 & 0
\end{smallmatrix}\right)=\begin{pmatrix} R_{x,new}\cr \mathbf{0} \end{pmatrix},\;\;U_G=G_m\cdots G_2 G_1,
\end{equation}
using the sequence of Givens rotations $G_i$  with pivotal positions $((i,i); (m+1,i))$ for $i=1,...,m$. The cost of such an update, without using fast scaled rotations, is $3m^2+O(m)$ \emph{flops}.
{With fast rotations, this can be reduced to $2.5m^2+O(m)$ \emph{flops}.} For comparison, direct update of $P_x$ requires, by exploiting symmetry\footnote{In LAPACK, the symmetry of low rank updates is used in the subroutine DSYRK.} and using already computed $z=P_x x_{new}$, $m^2+O(m)$ \emph{flops}. To summarize:

\begin{proposition}\label{PROP:Chol-update}
Let $X$ and $Y$ be data matrices and let $X$ have full row rank. Assume that the Cholesky decomposition $XX^T = R_x^TR_x$ is available. After receiving a new pair of snapshots $(x_{new}, y_{new})$, the matrices $R_x$ and $A=YX^\dagger=(YX^T)(XX^T)^{-1}$ can be updated as
\begin{equation}\label{eq:Chol-update}
     A_{new} = A+\gamma(y_{new}-Ax_{new})p_x^T , \quad  p_x = (R_{x}^{-1}(R_{x}^{-T}x_{new})), \quad  \gamma = 1/(1+x_{new}^Tp_x),
\end{equation}
 where $U_G$ is the product of Givens rotations from (\ref{eq:Rnew-scheme}) and $R_{x, new} = U_G(1:m, 1:m+1) \left(\begin{smallmatrix}
R_{x} \\ x_{new}^T
\end{smallmatrix}\right)$ is the update of $R_x$ to be used in next iteration.
\end{proposition}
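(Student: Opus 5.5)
The argument has two independent parts. The first is the update formula for $A$, which reduces to the Sherman--Morrison derivation (\ref{eq:A-new-Zhang}) with $P_x=(R_x^TR_x)^{-1}$ applied only implicitly. The second is the claim that $U_G(1:m,1:m+1)$ applied to the stacked matrix produces the Cholesky factor of $\G_{x,new}$.

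\emph{Update of $A$.} Since $X$ has full row rank, $\G_x=XX^T=R_x^TR_x$ is positive definite and $P_x=\G_x^{-1}=R_x^{-1}R_x^{-T}$. Hence $p_x=R_x^{-1}(R_x^{-T}x_{new})=P_xx_{new}$ is exactly the vector $z$ of (\ref{eq:Pxnew}). Also $1+x_{new}^Tp_x=1+x_{new}^TP_xx_{new}\geq 1>0$, so $\gamma$ is well defined. I would then verify (\ref{eq:A-new-Zhang}) directly. We have $A_{new}=(YX^T+y_{new}x_{new}^T)\G_{x,new}^{-1}$, and by (\ref{eq:Sherman-Morrison-G}) $\G_{x,new}^{-1}=P_x-\gamma p_xp_x^T$, using that $P_x$ is symmetric so $x_{new}^TP_x=p_x^T$. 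Expanding gives
\[
A_{new}=A-\gamma Ax_{new}p_x^T+y_{new}p_x^T-\gamma (x_{new}^Tp_x)\,y_{new}p_x^T .
\]
The coefficient of $y_{new}p_x^T$ is $1-\gamma x_{new}^Tp_x=\gamma$. Collecting terms gives $A_{new}=A+\gamma(y_{new}-Ax_{new})p_x^T$, which is (\ref{eq:Chol-update}).

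\emph{Update of $R_x$.} Write $\G_{x,new}=\G_x+x_{new}x_{new}^T=M^TM$ with $M=\left(\begin{smallmatrix}R_x\\ x_{new}^T\end{smallmatrix}\right)\in\R^{(m+1)\times m}$. Each Givens rotation $G_i$ acts on rows $i$ and $m+1$ and is chosen to annihilate entry $(m+1,i)$. I would argue inductively that after $G_1,\ldots,G_i$ the top $m\times m$ block is still upper triangular and row $m+1$ is zero in its first $i$ positions. Rotating rows $i$ and $m+1$ mixes only row $i$ of the triangular block, whose entries in columns $1,\ldots,i-1$ are zero, with the last row, whose entries in columns $1,\ldots,i-1$ are already zero. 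So no earlier zeros are destroyed, and entry $(m+1,i)$ is removed by the choice of rotation.

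After $m$ steps we get $U_GM=\left(\begin{smallmatrix}R_{x,new}\\ \mathbf{0}\end{smallmatrix}\right)$. Since the last row of $U_GM$ is zero, $R_{x,new}=U_G(1:m,:)M=U_G(1:m,1:m+1)M$. Orthogonality of $U_G$ gives $R_{x,new}^TR_{x,new}=M^TU_G^TU_GM=M^TM=\G_{x,new}$. Because $\G_{x,new}$ is nonsingular, $R_{x,new}$ is nonsingular upper triangular, and it is the Cholesky factor, unique up to the signs of its rows, which can be normalized if desired.

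The only step requiring care is the induction that preserves the triangular structure, that is, checking that the rotation order $i=1,\ldots,m$ does not refill zeros. The rest is routine algebra already contained in (\ref{eq:Sherman-Morrison-G})--(\ref{eq:Pxnew}).
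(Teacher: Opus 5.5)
Your proposal is correct and takes the same route as the paper. The paper obtains the $A$-update from the Sherman--Morrison/Zhang formula (\ref{eq:A-new-Zhang}), with $z=P_xx_{new}$ computed by two triangular solves with $R_x$. It obtains $R_{x,new}$ from the Givens QR factorization of $\left(\begin{smallmatrix}R_x\\ x_{new}^T\end{smallmatrix}\right)$ in (\ref{eq:Rnew-scheme}). You additionally write out the algebra behind (\ref{eq:A-new-Zhang}) and the induction showing the triangular structure is preserved, which the paper only indicates through its sparsity-pattern picture.
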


\begin{remark}\label{REM:cond-est}
{\em
 In addition to maintaining the positive definiteness of $P_x$, which is implicitly assured, we have avoided the squared condition number because $\kappa_2(R_x) = \kappa_2(\G_x)^{1/2} = \kappa_2(P_x)^{1/2}= \kappa_2(X)$. Example \ref{example-chua} illustrates the significance of this.
Since $\sqrt{\kappa_2(P_x)}=\kappa_2(R_x)$ and $R_x$ is upper triangular, we can use fast condition estimators for triangular matrices and always have useful error estimate. 
}
\end{remark}

\subsection{Updating TQ decomposition}\label{SS=NewZhang-LQUpdate}

Another approach that avoids using the Sherman-Morrison formula entirely is to use TQ decomposition of $X$. Assume TQ decomposition of $X$ is available, $X^T = QT^T$, where $T \in \R^{m \times m}$ is a triangular matrix and $Q \in \R^{n \times m}$ orthonormal ($Q^TQ = I$). Then, 
    $A = \left( YQ\right)T^{-1} = \G_{y,q}T^{-1}$.
When a new snapshot pair becomes available $(x_{new}, y_{new})$, 
\begin{equation}
    \begin{pmatrix}
    X & x_{new} 
\end{pmatrix} = \begin{pmatrix}
    T & x_{new}
\end{pmatrix}\begin{pmatrix}
    Q^T & \mathbf{0} \\ \mathbf{0} & 1
\end{pmatrix}, \; \; \begin{pmatrix}
    Y & y_{new}
\end{pmatrix}\begin{pmatrix}
    Q & \mathbf{0} \\ \mathbf{0} & 1
\end{pmatrix} = \begin{pmatrix}
    \widehat G_{y,q} & y_{new}
\end{pmatrix}.
\end{equation}
The expression above can be returned to proper TQ factorization using Givens rotations. The schema for application of rotations if $T$ is upper triangular is
\begin{equation}\label{eq:Tnew-scheme-upper}
 \begin{pmatrix} T & x_{new}\end{pmatrix}G_m^T\cdots G_2^T G_1^T  = 
\left( \begin{smallmatrix} 
* & * & * & * & + \cr
0 & * & * & * & +\cr
0 & 0 & * & * & + \cr
0 & 0 & 0 & * & +\cr
\end{smallmatrix}\right) G_m^T\cdots G_2^T G_1^T = \left( \begin{smallmatrix} 
* & * & * & * &0 \cr
0 & * & * & * &0\cr
0 & 0 & * & * &0\cr
0 & 0 & 0 & * &0\cr
\end{smallmatrix}\right)=\begin{pmatrix} T_{new} & \mathbf{0} \end{pmatrix},
\end{equation} 
and in the case of lower triangular $T$, 
\begin{equation}\label{eq:Tnew-scheme-lower}
 \begin{pmatrix} T & x_{new}\end{pmatrix}G_1^TG_2^T\cdots  G_m^T  = 
\left( \begin{smallmatrix} 
* & 0 & 0 & 0 & + \cr
* & * & 0 & 0 & +\cr
* & * & * & 0 & + \cr
* & * & * & * & +\cr
\end{smallmatrix}\right) G_1^TG_2^T\cdots  G_m^T  = \left( \begin{smallmatrix} 
* & 0 & 0 & 0 & 0 \cr
* & * & 0 & 0 & 0\cr
* & * & * & 0 & 0 \cr
* & * & * & * & 0\cr
\end{smallmatrix}\right)=\begin{pmatrix} T_{new} & \mathbf{0} \end{pmatrix}.
\end{equation} 
Updates to $\widehat G_{y,q}$ and $T$ are then obtained by
\begin{equation}
    T_{new} = \begin{pmatrix}
        T & x_{new}
    \end{pmatrix}U_G^T (:, 1:m), \; \;
    \widehat G_{y,q}^{(new)} = 
      \begin{pmatrix}
         YQ & y_{new}
     \end{pmatrix} U_G^T(:, 1:m) ,
\end{equation}
where the notation $U_G$ for a series of Givens rotations was reused. Therefore $\G_{y,q}$ and $T$ can be updated by calling $[\G_{y,q}, T] =\texttt{updateYQT}(\G_{y,q}, T, x_{new}, y_{new}, 0, 0, \mathrm{tri})$ (Algorithm \ref{ALG:updateYQT}). 
\vspace{-1mm}
\subsubsection{Example: Chua's circuit}\label{example-chua}
\vspace{-1mm}
The discussion from this section can be illustrated using one challenging example from dynamical systems: the Chua's circuit \cite{Chua-1980}, 
\cite{Matsumoto-1984}. The system is numerically simulated on a discrete uniform grid in $[0,t_{\max}]$ with spacing $\Delta t$ and initial condition $\x_0=(x_0,y_0,z_0)$, using 
RK45 solver. The observables are $x, y, z, x^2, y^2, z^2$. The initial blocks $X_0$ and $Y_0$ are $6\times n_0$. 
The updating formulas are tested simply by comparing the updated matrix $A_{new}$ with the explicitly computed reference value $Y_{new}X_{new}^\dagger$. 
The purpose of the example is to illustrate the numerical aspects of the updating formulas and 
the advantage of the methods proposed in this and previous section. All of the calculations were done entirely in double precision.

The results of experiments with the Chua's circuit (using a concrete example from \cite{chuacircuits_matlab}) are shown in Figure \ref{FIG:EX:Chua py}. In the first two, the condition numbers of $X$ behaved well - there was almost no growth. But, as pointed out in  \S \ref{SSS=kappaGrowthTheory} (Proposition \ref{PROP:kappaG-growth}), this cannot be guaranteed. Indeed, the last panel shows a persistent growth. Furthermore, figure \ref{subfig:chua-n0-250}  shows that even if a larger number of initial snapshots $n_0$ is used, the initial error for classical SM method might be smaller in the beginning but still grow. Meanwhile, Cholesky and TQ decompositon approaches display almost the same, relatively small error as before.
\begin{figure}[ht]
    \centering
    \begin{subfigure}[t]{0.32\textwidth}
        \centering
        \includegraphics[width=\textwidth]{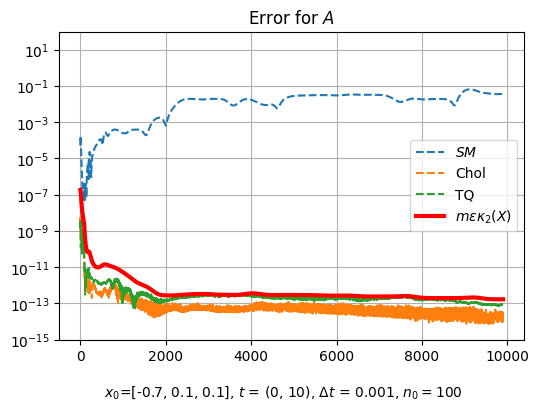}
        \caption{$x_0 = [-0.7, 0.1, 0.1], \Delta t =10^{-3}$}
    \end{subfigure}
    \hfill
    \begin{subfigure}[t]{0.32\textwidth}
        \centering
        \includegraphics[width=\textwidth]{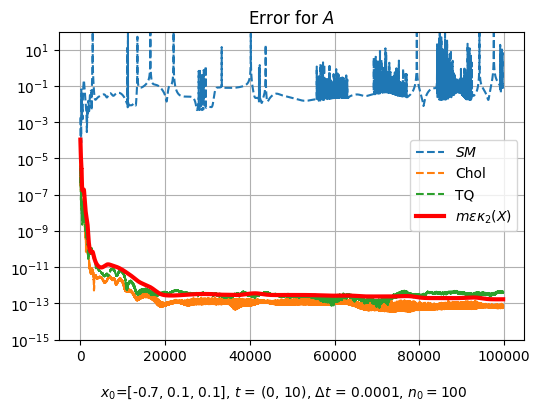}
        \caption{$x_0 = [-0.7, 0.1, 0.1], \Delta t =10^{-4}$}
    \end{subfigure}
    \hfill
    \begin{subfigure}[t]{0.32\textwidth}
        \centering
        \includegraphics[width=\textwidth]{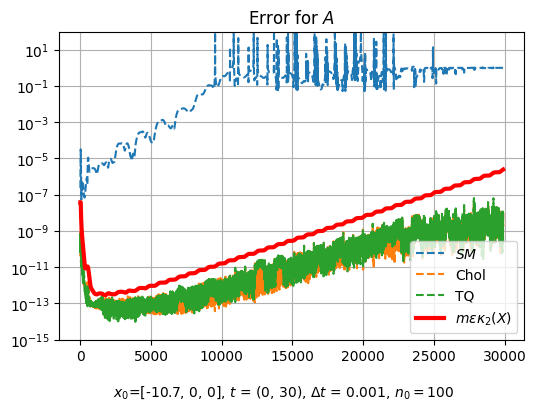}
        \caption{$x_0 = [-10.7, 0, 0], \Delta t =10^{-3}$}
    \end{subfigure}  
    \vspace{-2mm}
    \caption{Relative error for $||A_{new}-Y_{new}X_{new}^\dagger||_2/||Y_{new}X_{new}^\dagger||_2$ obtained using classical SM formula ($A_{SM}$), SM with Cholesky ($A_{Chol}$) and using TQ decomposition ($A_{TQ}$). The relative error is low for $A_{Chol}$ and $A_{TQ}$ decomposition approach and consistently below or near $m\varepsilon  \kappa_2(X)$, while the error for $A_{SM}$ grows noticeably in all three examples. We also observed that matrix $P_x$ lost its positive definiteness either during initialization (middle scenario) or at some point during the updates (left and right scenarios).}\label{FIG:EX:Chua py}
\end{figure}

Clearly, if the data, selected observables and other parameters (e.g., $n_0$) are such that the problem is ill-conditioned the computation will fail. This is illustrated in figures \ref{subfig:chua-k1} and \ref{subfig:chua-k2}. An important advantage of our proposed method is that the $O(m)\epsilon\kappa_2(X)$ bound can be used as a reliable indicator of accuracy. It can be efficiently computed as discussed in Remark \ref{REM:cond-est}. Note how in the last two panels $m\epsilon\kappa_2(X)$ determines the accuracy as long as it is below one. Once it passes over one, no method can guarantee any accuracy in the computed matrix $A$.
\begin{figure}[h]
    \centering
    \begin{subfigure}[t]{0.32\textwidth}
        \centering
        \includegraphics[width=\textwidth]{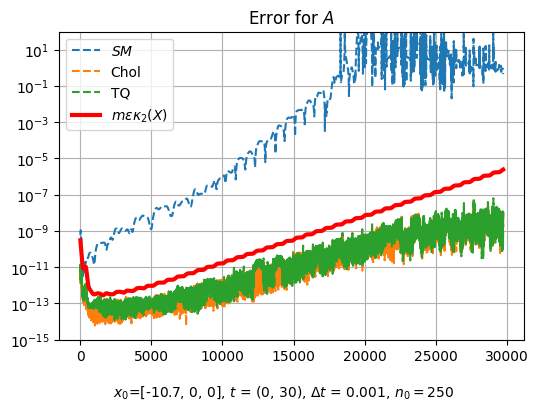}
        \caption{$n_0 = 250$}\label{subfig:chua-n0-250}
    \end{subfigure}
    \hfill
    \begin{subfigure}[t]{0.32\textwidth}
        \centering
        \includegraphics[width=\textwidth]{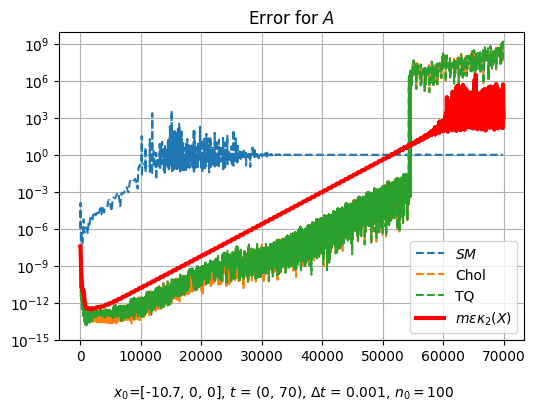}
        \caption{$t_{max}=70, n_0 = 100$}\label{subfig:chua-k1}
    \end{subfigure}
    \hfill
    \begin{subfigure}[t]{0.32\textwidth}
        \centering
        \includegraphics[width=\textwidth]{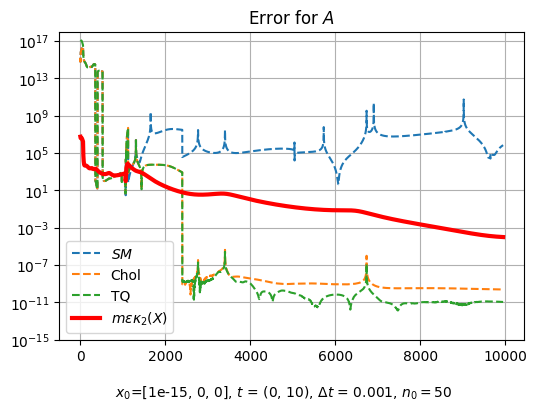}
        \caption{$x_0 = [10^{-15}, 0, 0], n_0 = 50$}\label{subfig:chua-k2}
    \end{subfigure}
    \vspace{-2mm}
    \caption{\textit{Left:} 
    For larger $n_0$ the initial error for $A_{SM}$ is smaller. It still starts growing at some point, while errors for $A_{Chol}$ and $A_{TQ}$ remain relatively small. 
    \textit{Middle:} Errors for $A_{TQ}, A_{Chol}$ follow  $m\varepsilon \kappa_2(X)$ curve while it is smaller than $\approx 1$. 
    \textit{Right:} All errors start high. As $m\varepsilon \kappa_2(X)$ approaches $\approx 1$, errors for $A_{Chol}$ and $A_{TQ}$ dampen, while $A_{SM}$ does not manage to recover. \label{fig:chua-larger_n0}}
\end{figure}
\section{Concluding remarks and outlook}\label{S=Conclude}
We have provided new numerically robust core subroutines for streaming applications of the (E)DMD.
The next steps in our work include out-of-core implementations, and extending these methods to dynamically changed data window widths, where
the oldest data are gradually or abruptly discarded, as dictated by sudden changes in the dynamics and 
guided by the forecast performance. This will be done for several variations of the (E)DMD, and for applications such as  e.g. adaptive data driven model predictive control.

%% file: sections/appendix.tex
\newpage

\section{Appendix}

\subsection{Generalization of Sherman-Morrison updates}

It is possible to extend the Sherman-Morrison updates proposed by Zhang et al. (section \ref{S=Zhang-New}) to 
accommodate changes in the rank of the streaming data matrix $X$. When $X$ isn't full rank, the DMD matrix $A$ is not uniquely determined since the least-squares problem is under-determined. If $X$ is decomposed into orthogonal matrix and low-rank component $X = Q_x \widetilde X$,
then snapshot matrix can be decomposed as $S = Q\widetilde S$, with $Q$ obtained by (potentially) expanding $Q_x$ as in section \ref{S=Hemati_ONE_BASIS}. It is simple to show that the formulas for the Rayleigh-Ritz matrix $Q_x^TAQ_x$ are exactly \eqref{eq:QTAQ-nonzero-row} or \eqref{eq:QxTAQx-Sx} depending on the last row of $\widetilde S$. As new snapshots arrive, the rank of $S_{new}$ might increase, expanding $Q_x$ and $Q$. The method we propose manages to update $G_x$ and $\widetilde A$ in both scenarios (expansion or no expansion) and keeps good numerical properties. This method can also be used in tall data matrix scenario ($m>n$)  to update $G_x^{-1}$ (instead of $G_x$ and then inverting) since the same formulas are used (\eqref{eq:QTAQ-nonzero-row} and \eqref{eq:QxTAQx-Sx}).

\begin{proposition}
    Using the notation from section \ref{S=Hemati_ONE_BASIS}, let $G_x = (\overline S_x \overline S_x^T)$ and $\widetilde A = \ry \overline S_x^{\dagger} = (\ry \overline S_x^T)G_x^{-1}$. Assume new snapshot pair becomes available $(x_{new}, y_{new})$, and let $g_x, \gamma_x$ and $g_y, \gamma_y$ be the results of GS procedure as described in algorithm \ref{ALG:GS-Reorthog}. Using $R_x$ from the QR decomposition of  $ \overline S_x^T  = \widetilde QR_x$, if $\gamma_x >0$
\begin{equation}
    \widetilde A_{new} = \begin{cases}
        \begin{pmatrix}
        \widetilde A & -\frac{1}{\gamma_x}\left(\widetilde Ag_x -  g_y \right)
    \end{pmatrix}, \quad  \gamma_y=0,\\ 
        \begin{pmatrix}
        \widetilde A & -\frac{1}{\gamma_x}\left(\widetilde Ag_x -  g_y \right)\\
        \mathbf 0  & \gamma_y/\gamma_x
    \end{pmatrix}, \quad  \gamma_y>0,
    \end{cases} \quad \quad R_{x, new} = U_G\begin{pmatrix}
        R_{x} & \mathbf{0} \\ g_x^T & \gamma_x
    \end{pmatrix}.
\end{equation}
And with $z=R_x^{-1}(R_x^{-T}g_x)$ and $\alpha = 1+(g_x^TR_x^{-1})(R_x^{-T}g_x)$, if $\gamma_x =0$ 

\begin{equation}
    \widetilde A_{new} =\begin{cases}
       \widetilde A - \alpha\left(\widetilde Ag_x - g_y\right)z^T, \quad \gamma_y = 0, \\
       \begin{pmatrix}
           \widetilde A - \alpha\left(\widetilde Ag_x - g_y)z^T\right) \\
           (\gamma_y/\alpha)z^T
       \end{pmatrix}, \quad \gamma_y > 0,
    \end{cases} \quad \quad R_{x, new} = U_G(1:end-1,:)\begin{pmatrix}
        R_x  \\ g_x^T 
    \end{pmatrix}.
\end{equation}

\end{proposition}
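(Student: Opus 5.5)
The plan is to work directly with the definition $\widetilde A=\ry\overline S_x^\dagger$ and the block structure of the updated low-rank factors from \S\ref{SS=AddNewData}, treating the four cases ($\gamma_x>0$ or $=0$, and $\gamma_y>0$ or $=0$) separately. The $R_x$ updates will follow the Givens schemes already established: Proposition \ref{PROP:NewUpdate} in transposed form, and the Cholesky/QR update (\ref{eq:Rnew-scheme}) of Proposition \ref{PROP:Chol-update}. For $\widetilde A_{new}$ I will use the pseudoinverse update formulas: the Meyer-type result (\ref{Meyer}) when $\gamma_x>0$, and the Sherman--Morrison identity (\ref{eq:Sherman-Morrison-G}) when $\gamma_x=0$. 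Throughout I write $G_x=R_x^TR_x$, so that $G_x^{-1}g_x=R_x^{-1}R_x^{-T}g_x=z$.

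\textbf{Case $\gamma_x>0$.} Here $\overline S_{x,new}=\left(\begin{smallmatrix}\overline S_x & g_x\\ \mathbf 0 & \gamma_x\end{smallmatrix}\right)$ is square-block upper triangular with nonsingular diagonal blocks in the relevant sense. I will verify directly that the right inverse (pseudoinverse) is $\left(\begin{smallmatrix}\overline S_x^\dagger & -\overline S_x^\dagger g_x/\gamma_x\\ \mathbf 0 & 1/\gamma_x\end{smallmatrix}\right)$. This matrix has orthonormal-compatible structure: check $M M^\dagger=I$ and that $M^\dagger M$ is symmetric, using the fact that $\overline S_x$ has full row rank. The same result can alternatively be read off from (\ref{Meyer}) via $\overline S^\dagger=\overline S^T G^{-1}$.

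Multiplying this pseudoinverse by $\ry^{new}$ gives the claimed formulas. For $\gamma_y=0$, $\ry^{new}=(\ry\;\,g_y)$. For $\gamma_y>0$, $\ry^{new}=\left(\begin{smallmatrix}\ry & g_y\\ \mathbf 0&\gamma_y\end{smallmatrix}\right)$. In both sub-cases the product yields $\widetilde A$ in the first block, $-(\widetilde A g_x-g_y)/\gamma_x$ in the last column, and $\gamma_y/\gamma_x$ in the corner.

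For $R_{x,new}$, note that $\overline S_{x,new}^T=\left(\begin{smallmatrix}\overline S_x^T&\mathbf 0\\ g_x^T&\gamma_x\end{smallmatrix}\right)=\left(\begin{smallmatrix}\widetilde Q&\mathbf 0\\ \mathbf 0&1\end{smallmatrix}\right)\left(\begin{smallmatrix}R_x&\mathbf 0\\ g_x^T&\gamma_x\end{smallmatrix}\right)$. Givens rotations $U_G$ then restore the triangular form; this is the transpose of the LQ argument in Proposition \ref{PROP:NewUpdate}.

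\textbf{Case $\gamma_x=0$.} Now $\overline S_{x,new}=(\overline S_x\;g_x)$ and $G_{x,new}=G_x+g_xg_x^T$. By Sherman--Morrison, $G_{x,new}^{-1}=G_x^{-1}-zz^T/(1+g_x^Tz)$. Then
\[
\ry^{new}\overline S_{x,new}^T=\ry\overline S_x^T+g_yg_x^T
\]
(with an extra row $\gamma_y g_x^T$ when $\gamma_y>0$). Expanding the product of these two expressions and collecting terms, in the manner of the derivation of (\ref{eq:A-new-Zhang}), gives $\widetilde A-c(\widetilde Ag_x-g_y)z^T$ and a last row proportional to $\gamma_y z^T$. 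The $R_x$ update is exactly (\ref{eq:Rnew-scheme}).

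The main obstacle is the scalar coefficient. A careful computation produces $c=1/(1+g_x^Tz)=1/\alpha$, and the last row becomes $\gamma_y g_x^TG_{x,new}^{-1}=\gamma_y z^T/\alpha$. The statement writes $-\alpha(\cdot)z^T$ in the first block, so I will recheck this coefficient against (\ref{eq:A-new-Zhang}), where $\gamma=1/(1+x^TP_xx)$. I expect that $\alpha$ there must be read as $1/(1+g_x^Tz)$, and I will state the reconciled formula explicitly. The remaining verifications are routine block algebra.
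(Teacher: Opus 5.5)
Your proposal is correct. For $\gamma_x=0$, and for both $R_{x,new}$ updates, it follows the paper's argument: Sherman--Morrison with $G_x=R_x^TR_x$, then Givens rotations to restore triangularity.

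The genuine difference is the case $\gamma_x>0$. The paper applies Meyer's theorem to obtain $G_{x,new}^{-1}$ in the form (\ref{Meyer}), and then multiplies out $\widetilde S_{y,new}\overline S_{x,new}^T G_{x,new}^{-1}$. You instead write down $N=\left(\begin{smallmatrix}\overline S_x^\dagger & -\overline S_x^\dagger g_x/\gamma_x\\ \mathbf 0 & 1/\gamma_x\end{smallmatrix}\right)$ and check the Penrose conditions directly: $\overline S_{x,new}N=I$, and $N\overline S_{x,new}=\mathrm{diag}(\overline S_x^\dagger\overline S_x,\,1)$ is symmetric. All this needs is that $\overline S_x$ has full row rank. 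Its block is wide, so ``nonsingular diagonal blocks'' is not the right description.

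What each route buys:
\begin{itemize}
\item Your route is shorter and self-contained. It needs neither Meyer's theorem nor its range hypotheses, and it never forms the Gram matrix, which fits the paper's emphasis on avoiding squared condition numbers.
\item The paper's route keeps both cases in the same Gram-inverse framework: Meyer when the basis expands, Sherman--Morrison when it does not. This is in the spirit of the Zhang et al.\ update it generalizes.
\end{itemize}

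Your suspicion about the scalar is also right. The paper's own computation ends with $\widetilde A-\frac{1}{\alpha}(\widetilde A g_x-g_y)z^T$, so the factor $\alpha$ in the first block of the statement is a misprint for $1/\alpha$. When you reconcile it, keep $\alpha=1+g_x^Tz$ as defined and change only that coefficient. Do not redefine $\alpha$ as $1/(1+g_x^Tz)$, because the last row $(\gamma_y/\alpha)z^T$ is already correct with the original $\alpha$.
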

\begin{remark}
   {\em These formulas use the same "trick" to avoid updating (and keeping) the matrix $G_{y,x}$ as Zhang et al. \cite{zhang_DMD} by applying Meyer's or Sherman-Morrison formula directly into $\widetilde A_{new}$ formulated via $\widetilde A$ and new snapshots.}
\end{remark}

\begin{proof}
    Assume $\gamma_x>0$ ($\overline S_x = \widehat S_x$) and $\gamma_y = 0$. Using the Meyer's formula for $G_{x,new}^{-1}$ gives 
\begin{align*}
\widetilde A_{new} &= \begin{pmatrix}
    \ry & g_y
 \end{pmatrix} \begin{pmatrix}
     \widehat S_x^T & \mathbf 0 \\ g_x^T & \gamma_x
 \end{pmatrix}\begin{pmatrix}
     G_x^{-1} & -\frac{G_x^{-1}g_x}{\gamma_x} \\
        -\frac{g_x^TG_x^{-1}}{\gamma_x} & \frac{1+g_x^TG_x^{-1}g_x}{\gamma_x^2}
 \end{pmatrix} 
 = \begin{pmatrix}
     G_{y,x} & \mathbf{0}
 \end{pmatrix}G_{x,new}^{-1} + g_y \begin{pmatrix}
     g_x^T & \gamma_x
 \end{pmatrix}G_{x, new}^{-1} \\
&= \begin{pmatrix}
    G_{y,x}G_x^{-1} & -\frac{G_{y,x}G_x^{-1}}{\gamma_x} 
\end{pmatrix} + g_y\begin{pmatrix}
    \mathbf{0} & 1/ \gamma_x\end{pmatrix} = \begin{pmatrix}
        \widetilde A & -\frac{1}{\gamma_x}\left( \widetilde Ag_x - g_y\right)
    \end{pmatrix}.
\end{align*}
If $\gamma_y > 0$, starting with
$$
\widetilde A_{new} = \begin{pmatrix}
    \ry & g_y \\ \mathbf{0} & \gamma_y
 \end{pmatrix} \begin{pmatrix}
     \widehat S_x^T & \mathbf 0 \\ g_x^T & \gamma_x
 \end{pmatrix}G_{x,new}^{-1}$$ 
 the similar result can be proved. In both cases $\overline S_{x, new}^T = \left( \begin{smallmatrix}
     \widetilde Q & \mathbf 0 \\ \mathbf 0 & 1
 \end{smallmatrix}\right)\left(\begin{smallmatrix}
     R & \mathbf 0 \\ g_x^T & \gamma_x
 \end{smallmatrix} \right) = \widetilde Q_{new} R_{x,new}$, and $R_{x,new}$ is returned to upper-triangular by a series of Givens transformations denoted by $U_G$ (as in \eqref{eq:Rnew-scheme}, except the last row is non-zero, and therefore not discarded). 

 In the scenario where $\gamma_x = 0$, Cholesky updates and Sherman-Morrison formula can be used. In this case $G_{x, new}^{-1}=
     G_x^{-1} - \frac{1}{\alpha}G_x^{-1}\widetilde x_{new} \widetilde x_{new}^T G_x^{-1}$ and, for $\gamma_y = 0$
\begin{align*}
    \widetilde A_{new} &= \begin{pmatrix}
        \ry & g_y
    \end{pmatrix} \begin{pmatrix}
        \rx^T \\ g_x^T
    \end{pmatrix}\left( G_x^{-1} - \frac{1}{\alpha}G_x^{-1} g_x g_x^T G_x^{-1} \right) \\
    &= G_{y,x}G_x^{-1} - \frac{1}{\alpha}G_{y,x}G_x^{-1}g_x g_x^TG_x^{-1} + g_yg_x^TG_x^{-1} - \frac{1}{\alpha}(\alpha - 1) g_y g_x^T G_x^{-1} \\
    &= \widetilde A - \frac{1}{\alpha}\left( \widetilde Ag_x - g_y\right)g_x^TG_x^{-1} = \widetilde A - \frac{1}{\alpha}\left(\widetilde A g_x - g_y \right)z^T.
\end{align*}
The similar result can be shown for $\gamma_y > 0$ starting with
$$
\widetilde A_{new} = \begin{pmatrix}
    \ry & g_y \\ \mathbf{0} & \gamma_y
\end{pmatrix}\begin{pmatrix}
    \rx^T \\ g_x^T
\end{pmatrix}G_{x,new}^{-1}. 
$$
In this case $\widetilde X_{new}^T = \left( \begin{smallmatrix}
    Q & \mathbf 0 \\ \mathbf{0} & 1
\end{smallmatrix}\right)\left(\begin{smallmatrix}
     R_x \\ g_x^T
\end{smallmatrix} \right)$ and, after updating the last row of $\left(\begin{smallmatrix}
     R_x \\ g_x^T
\end{smallmatrix} \right)$ to zeros, only the upper triangle is saved (as in \eqref{eq:Rnew-scheme}).
\end{proof}

\begin{remark}
    {\em To use this method in practice, $Q$ must also be updated. If $\gamma_y > 0$, $Q$ is expanded by new orthogonal column (obtained by GS on $y_{new}$), otherwise it stays the same.  This can be done at the end of updates described above.}
\end{remark}

This method (denoted as Exp-sDMD) was applied on Gray-Scott data from subsection \ref{SSS:Gray-Scott} with the same parameters and produced similar results to TQ-sDMD method. Since only the Cholesky factor is updated and used, instead of explicitly forming $\overline S_x \overline S_x^T$, we expect this algorithm to be more robust than HWR-sDMD in finite precision, so long as the Sherman-Morrison and Meyer's formula perform well numerically.

\begin{figure}[h]
    \centering
    \includegraphics[width=0.95\linewidth]{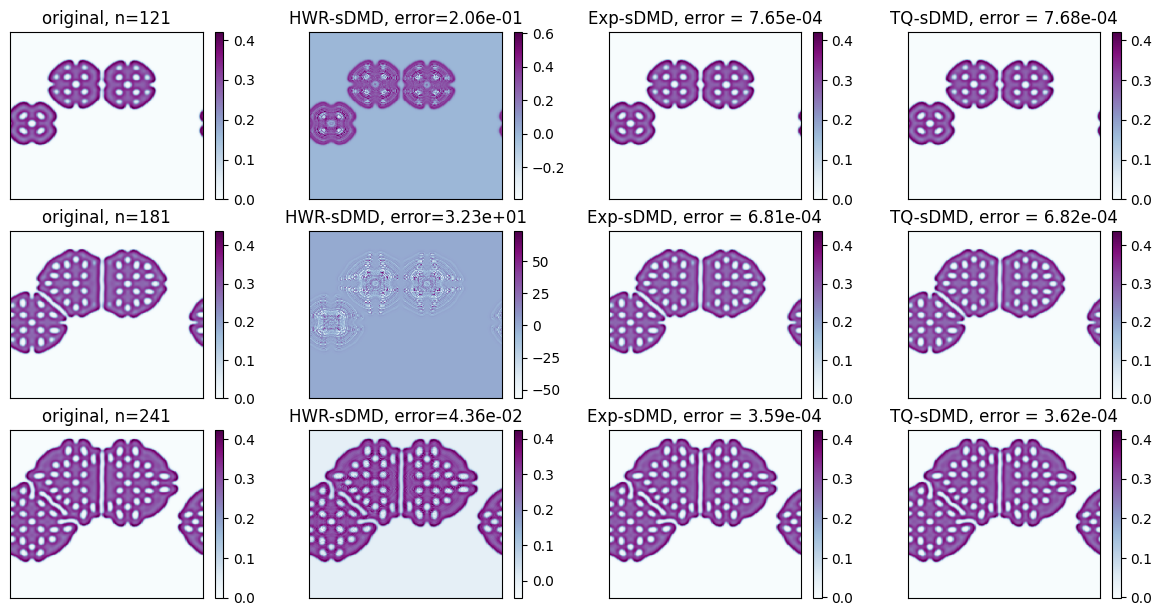}
    \caption{Prediction $1$ step ahead when using HWR-sDMD (64), Exp-sDMD (32) described in this section and TQ-sDMD (32). Exp-sDMD predicts similar snapshots as TQ-sDMD.}
    \label{fig:suppl-gs-4}
\end{figure}

\begin{figure}[h]
    \centering
    \includegraphics[width=0.95\linewidth]{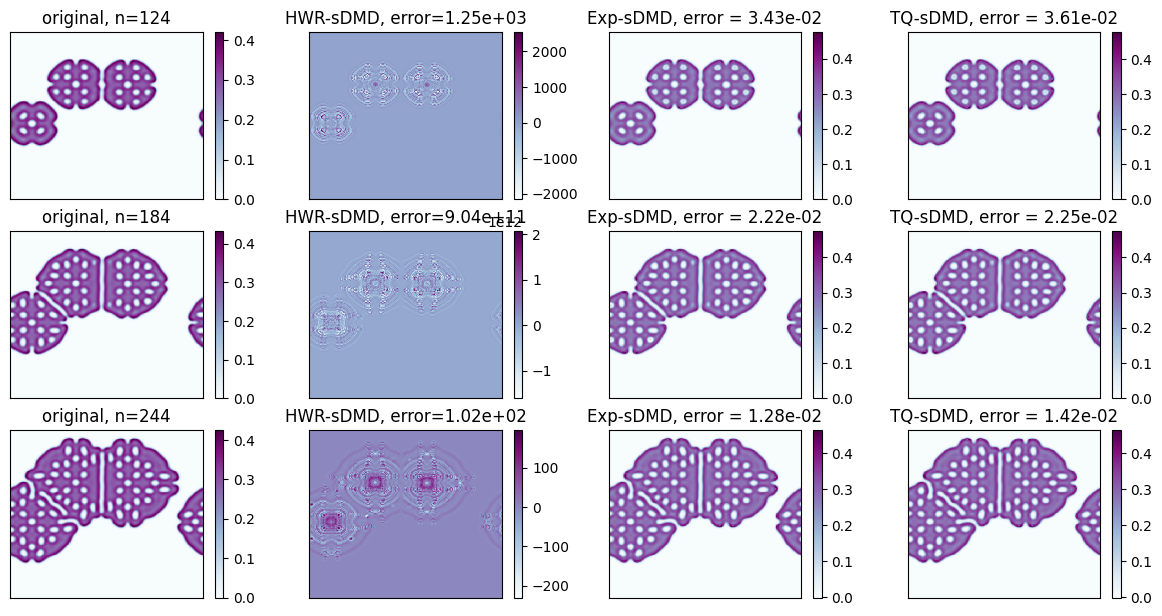}
    \caption{Prediction $5$ steps ahead when using HWR-sDMD (64), Exp-sDMD (32) described in this section and TQ-sDMD (32). Exp-sDMD predicts similar snapshots as TQ-sDMD.}
    \label{fig:suppl-gs-5}
\end{figure}

\begin{figure}[h]
    \centering
    \begin{subfigure}[t]{0.49\textwidth}
        \centering
        \includegraphics[width=1.05\linewidth, height=0.65\linewidth]{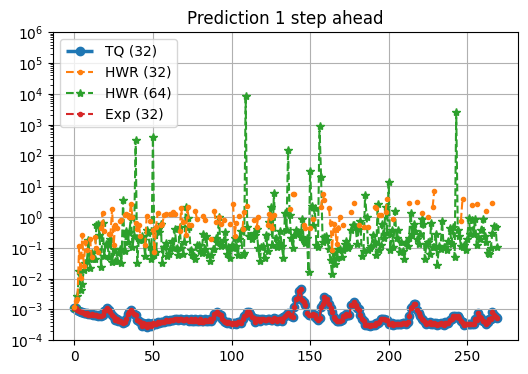}
    \end{subfigure}
    \hfill
    \begin{subfigure}[t]{0.49\textwidth}
        \centering
        \includegraphics[width=1.05\linewidth, height=0.65\linewidth]{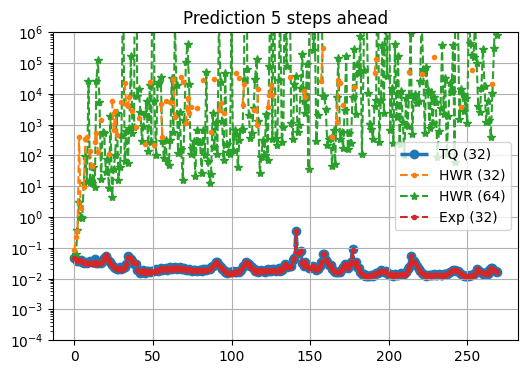}
    \end{subfigure}
    \caption{Relative error norms for prediction $1$ step ahead (\textit{left}) and $5$ steps ahead (\textit{right}) when using HWR-sDMD (64), Exp-sDMD (32) and TQ-sDMD (32). Exp-sDMD has almost the same error norms as TQ-sDMD.}\label{fig:suppl-gs-norms}
\end{figure}
\subsection{Initialization}
All schemes for fast updating of the DMD matrix $A$ require initialization 
$A= YX^\dagger$, which is the solution of the least squares problem $\|AX-Y\|_F\rightarrow\min_{A}$.
There are two typical ways to do that in practice. The theoretically sound way is to compute the pseudoinverse of $X$ and then multiply, $A^{(pinv)} = Y\cdot \texttt{pinv}(X)$. This is appealing because of the rich theory of the pseudoinverse and availability of the function that computes it in popular software packages such as Python, Matlab, Octave. The other is to solve the LS problem $\|X^T A^T-Y^T\|_F\rightarrow\min_{A}$ using rank revealing QR factorization (implemented as \texttt{solve} in python's scipy or $'\backslash{}'$ in matlab) -- thus obtained solution will be denoted by $A^{(solve)}$. 

In general, using the pseudoinverse in situations like this one is ill-advised. We 
illustrate this by showing how the second option  provides better initial approximation and, consequently, smaller errors during the subsequent updates. Similarly, when using SM method, $P_x = G_x^{-1}$ can be initialized by calling \texttt{pinv}, which we will denote by $P_x^{(pinv)}$, or by using \texttt{solve} with the added information that $G_x$ is a positive definite matrix, which we will denote by $P_x^{(solve)}$, implicitly using Cholesky decomposition.

\begin{example}
    We observe Chua's system with $x_0 = [-0.7, 0.1, 0.1], t_{max}=10$, $\Delta t = 10^{-3}$ and $n_0 = 100$. Observables are $x,y,z, x^2, y^2, z^2$. Figure \ref{fig:chua-solve-ls} displays the error difference when using the same algorithms but different initializations for $A$ and $P_x$. In the left figure, $A_0$ is initialized using \texttt{pinv} and in the right using \texttt{solve}. Each figure displays norm error with $P_x$ initialized using \texttt{pinv} and \texttt{solve} for SM method. The figures confirm it is better to use \texttt{solve} for both $A_0$ and $P_x$. Algorithm TQ-sDMD is calculated in the same matter in both figures. 
\begin{figure}[h]
    \centering
    \begin{subfigure}[t]{0.45\textwidth}
        \centering
        \includegraphics[width=0.9\textwidth]{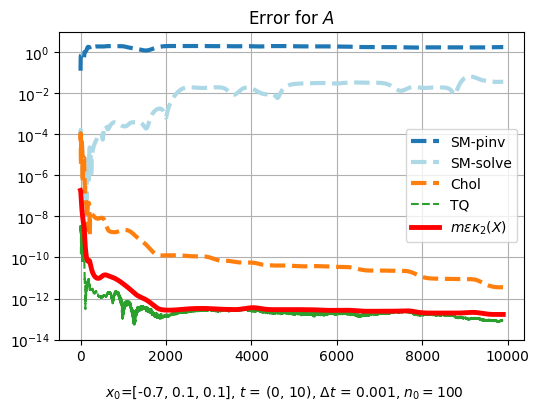}
        \caption{$A_0$ initialized using \texttt{pinv}}
    \end{subfigure}
    \hfill
    \begin{subfigure}[t]{0.45\textwidth}
        \centering
        \includegraphics[width=0.9\textwidth]{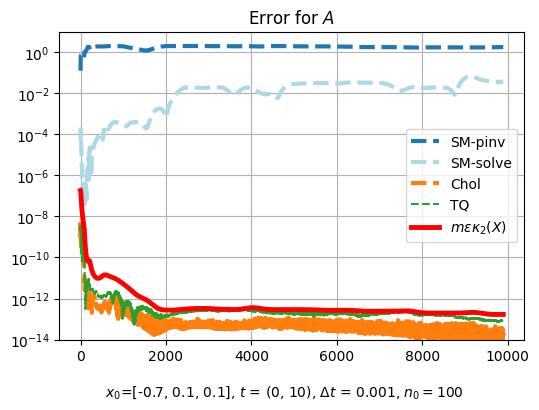}
        \caption{$A_0$ initialized using \texttt{solve}}
    \end{subfigure}
    \caption{Initializing $A_{Chol}$ by \texttt{pinv} results in larger starting error that is inherited by further iterations.  Initialization of $A_{SM}$ does not affect SM as much as the initialization of $P_x$ -- when SM is initialized with \texttt{pinv} (SM-pinv), it produces larger errors than when using \texttt{solve} (SM-solve).}\label{fig:chua-solve-ls}
\end{figure}
\end{example}
\begin{example}
   Next dynamical system we observe is Lorenz system. As initial point we take $x_0 = [1,1,1]$, $t_{max} = 20$, $\Delta t = 0.002$ and classical parameters of the system $\sigma=10, \rho=28, \beta=8/3$. Again we take $x,y, z, x^2, y^2, z^2$ as observables and $n_0 = 100$.

Unlike in Chua's system, the initialization of $A_0$ makes little difference accuracy wise. There is a difference whether we use $P_x^{(pinv)}$ or $P_x^{(solve)}$. The reason for that is the implicit usage of Cholesky factorization of $G_x$ when using solve, replacing calculated $G_x$ with the triangular matrix $\widehat R_x$ with condition number $\kappa_2(\widehat R_x) = \sqrt{\kappa_2(G_x)}$. Keep in mind that, in practice, $\widehat R_x$ is not the same triangular matrix as $R_x$ obtained by QR factorization of $X^T$ due to rounding errors that have occurred as $G_x = XX^T$ was formed. The errors in $A$ and $P_x$ depending on initialization are observed in figures \ref{fig:lorenz-error-A} and \ref{fig:lorenz-error-P}. 
\vspace{-2mm}
\begin{figure}[h]
    \centering
    \begin{subfigure}[t]{0.45\textwidth}
        \centering
        \includegraphics[width=0.9\textwidth]{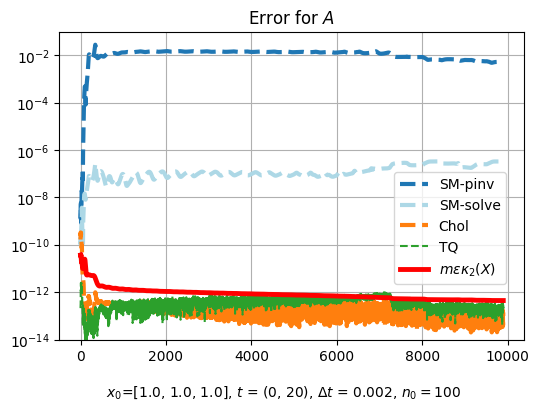}
        \caption{$A_0$ initialized using \texttt{pinv}}
    \end{subfigure}
    \hfill
    \begin{subfigure}[t]{0.45\textwidth}
        \centering
        \includegraphics[width=0.9\textwidth]{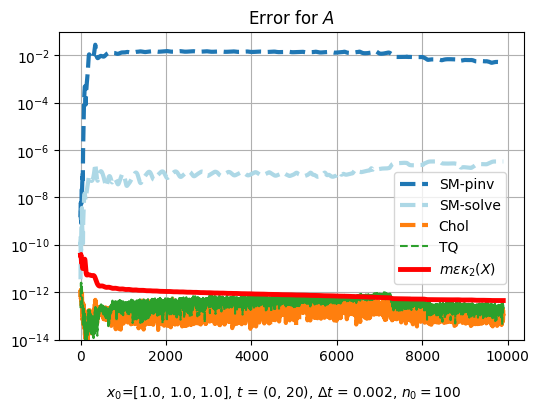}
        \caption{$A_0$ initialized using \texttt{solve}}
    \end{subfigure}
    \caption{Starting $A_0$ with \texttt{solve} or \texttt{pinv} makes very little difference in this scenario. On the other hand, initialization of $P_x$ matters and ultimately produces norm of error in $A_{SM}$ of $10^{-2}$ (\texttt{pinv}) instead of $10^{-7}$ (\texttt{solve}).}\label{fig:lorenz-error-A}
\end{figure}
\vspace{-2mm}
 \begin{figure}[H]
     \centering
     \begin{subfigure}[t]{0.45\textwidth}
         \centering
         \includegraphics[width=0.9\textwidth]{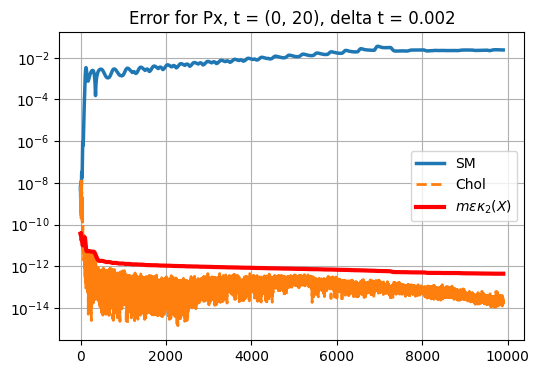}
         \caption{$P_x$ initialized using \texttt{pinv}.}
     \end{subfigure}
     \hfill
     \begin{subfigure}[t]{0.45\textwidth}
         \centering
         \includegraphics[width=0.9\textwidth]{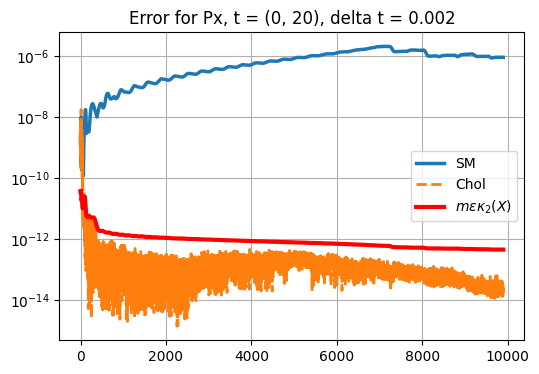}
         \caption{$P_x$ initialized using \texttt{solve}.}
     \end{subfigure}
     \caption{The error for updated $P_x$ starts lower when initializing with \texttt{solve}. The error accumulates up to $10^{-2}$ with \texttt{pinv} initialization, and only up to $10^{-6}$ with \texttt{solve}. In both cases, it is worse than the errors of $\approx 10^{-13}$ when updating using Cholesky factor in SM formula.}\label{fig:lorenz-error-P}
 \end{figure}
\end{example}

%% file: References.bib
@article{SCHMID_2010, 
title={Dynamic mode decomposition of numerical and experimental data}, 
volume={656}, 
DOI={10.1017/S0022112010001217}, 
journal={Journal of Fluid Mechanics}, 
author={Schmid, P. J.}, 
year={2010}, 
pages={5-28}}

@article{Schmid-Sesterhenn-DMD-2008,
	author = {Schmid, P. J. and Sesterhenn, J.},
	year = {2008},
	pages = {208},
	title = {{Dynamic Mode Decomposition} of numerical and experimental data},
	journal = {Bull. Amer. Phys. Soc., 61st APS meeting, San Antonio.}
}

@article{Baddoo-pidmd-2023,
    author = {Baddoo, P. J. and Herrmann, B. and McKeon, B. J. and Nathan, K. J. and Brunton, S. L.},
    title = {Physics-informed dynamic mode decomposition},
    journal = {Proceedings of the Royal Society A: Mathematical, Physical and Engineering Sciences},
    volume = {479},
    number = {2271},
    pages = {20220576},
    year = {2023},
    month = {03},
    issn = {1364-5021},
    doi = {10.1098/rspa.2022.0576}
}

@article{hemati_williams_rowley_2014, 
title={Dynamic mode decomposition for large and streaming datasets}, 
volume={26}, DOI={https://doi.org/10.1063/1.4901016}, 
number={11}, 
journal={Physics of Fluids}, 
publisher={American Institute of Physics}, 
author={Hemati, M. S. and Williams, M. O. and Rowley, C. W.}, year={2014} }

@book{Horn-Johnson-MA-90,
author    = {R. A. Horn and C. R. Johnson},
title     = {Matrix Analysis},
publisher = {Cambridge University Press},
year      = 1990}

@article{zhang_DMD,
author = {Zhang, H. and Rowley, C. W. and Deem, E. A. and Cattafesta, L. N.},
title = {Online Dynamic Mode Decomposition for Time-Varying Systems},
journal = {SIAM Journal on Applied Dynamical Systems},
volume = {18},
number = {3},
pages = {1586-1609},
year = {2019},
doi = {10.1137/18M1192329}
}

@article{chung2023turbulence,
      title={Turbulence in Focus: {B}enchmarking Scaling Behavior of {3D} Volumetric Super-Resolution with {BLASTNet} 2.0 Data}, 
      author={Wai Tong Chung and Bassem Akoush and Pushan Sharma and Alex Tamkin and Ki Sung Jung and Jacqueline H. Chen and Jack Guo and Davy Brouzet and Mohsen Talei and Bruno Savard and Alexei Y. Poludnenko and Matthias Ihme},
      year={2023},
      journal={Advances in Neural Information Processing Systems ({NeurIPS})},
      volume = {36}
}

@book{higham-asna,
author = {Higham, N. J.},
title = {Accuracy and Stability of Numerical Algorithms},
publisher = {Society for Industrial and Applied Mathematics},
year = {2002},
doi = {10.1137/1.9780898718027},
address = {},
edition   = {Second},
URL = {https://epubs.siam.org/doi/abs/10.1137/1.9780898718027},
eprint = {https://epubs.siam.org/doi/pdf/10.1137/1.9780898718027}
}

@ARTICLE{jovschnicPOF14,
AUTHOR = {M. R. Jovanovi\'c and P. J. Schmid and J. W. Nichols},
TITLE = {Sparsity-Promoting Dynamic Mode Decomposition},
JOURNAL = {Phys. Fluids},
VOLUME = {26},
NUMBER = {2},
PAGES = {024103 (22 pages)},
YEAR = {2014}
}

@article{Williams_2015-EDMD,
   title={A Data–Driven Approximation of the Koopman Operator: Extending Dynamic Mode Decomposition},
   volume={25},
   ISSN={1432-1467},
   url={http://dx.doi.org/10.1007/s00332-015-9258-5},
   DOI={10.1007/s00332-015-9258-5},
   number={6},
   journal={Journal of Nonlinear Science},
   publisher={Springer Science and Business Media LLC},
   author={Williams, M. O. and Kevrekidis, I. G. and Rowley, C. W.},
   year={2015},
   month=jun, pages={1307–1346} }

@misc{drmac-koopman-schur,
      title={A data driven Koopman-Schur decomposition for computational analysis of nonlinear dynamics}, 
      author={Z. Drma\v{c} and I. Mezi\'{c}},
      year={2024},
      eprint={2312.15837},
      archivePrefix={arXiv},
      primaryClass={math.NA},
      url={https://arxiv.org/abs/2312.15837}, 
}

@article{Drmac-DMD-TOMS-2024,
	author = {Z. Drma\v{c}},
	title = {A {LAPACK} Implementation of the {Dynamic Mode Decomposition}},
	year = {2024},
	issue_date = {March 2024},
	publisher = {Association for Computing Machinery},
	address = {New York, NY, USA},
	volume = {50},
	number = {1},
	issn = {0098-3500},
	url = {https://doi.org/10.1145/3640012},
	doi = {10.1145/3640012},
	journal = {ACM Trans. Math. Softw.},
	month = mar,
	articleno = {1},
	numpages = {32}
}

@article{dmd_enhancements-qrdmd,
author = {Drma\v{c}, Z. and Mezi\'{c}, I. and Mohr, R.},
title = {Data Driven Modal Decompositions: Analysis and Enhancements},
journal = {SIAM Journal on Scientific Computing},
volume = {40},
number = {4},
pages = {A2253-A2285},
year = {2018},
doi = {10.1137/17M1144155},
URL = { https://doi.org/10.1137/17M1144155},
eprint = { https://doi.org/10.1137/17M1144155
}
}

@INPROCEEDINGS{Ryu-etal-OnboardDMD-2022,
  author={Ryu, T. and Ali, W. H. and Haley, P. J. and Mirabito, C. and Charous, A. and Lermusiaux, P. F. J.},
  booktitle={OCEANS 2022, Hampton Roads}, 
  title={Incremental Low-Rank Dynamic Mode Decomposition Model for Efficient Forecast Dissemination and Onboard Forecasting}, 
  year={2022},
  volume={},
  number={},
  pages={1-8},
  doi={10.1109/OCEANS47191.2022.9977224}}

@article{Quadcoper-dmd-2023,
title = {Quadcopters Control Using Online Dynamic Mode Decomposition},
journal = {IFAC-PapersOnLine},
volume = {56},
number = {3},
pages = {589-594},
year = {2023},
note = {3rd Modeling, Estimation and Control Conference MECC 2023},
issn = {2405-8963},
doi = {https://doi.org/10.1016/j.ifacol.2023.12.088},
url = {https://www.sciencedirect.com/science/article/pii/S2405896323024217},
author = {B. S. Guevara and L. F. Recalde and J. Varela-Aldás and D. C. Gandolfo and J. M. Toibero}
}

@article{Li_Deng_2024, 
title={Active control of the flow past a circular cylinder using online dynamic mode decomposition}, 
volume={997}, 
DOI={10.1017/jfm.2024.738}, 
journal={Journal of Fluid Mechanics}, 
author={Li, X. and Deng, J.}, 
year={2024}, 
pages={A26}}

@article{Mignacca-videoDMD-2025, 
title={Real-time motion detection using dynamic mode decomposition}, 
volume={10}, 
DOI={10.1186/s13640-025-00673-4}, 
journal={J Image Video Proc}, 
author={Mignacca, M. and Brugiapaglia, S. and Bramburger, J.J.}, 
year={2025}, 
pages={1--18}}

@article{Avilla-Mezic-2020, 
title={Data-driven analysis and forecasting of highway traffic dynamics}, 
volume={11}, 
number = {2090},
DOI={10.1038/s41467-020-15582-5}, 
journal={Nature Communications }, 
author={Avila, A.M. and Mezi\'{c}, I.}, 
year={2020}}

@misc{usdot_ngsim_2016,
  author       = {{U.S. Department of Transportation Federal Highway Administration}},
  title        = {{Next Generation Simulation (NGSIM) Vehicle Trajectories and Supporting Data}},
  year         = {2016},
  howpublished = {\url{http://doi.org/10.21949/1504477}},
  note         = {Dataset provided by ITS DataHub through Data.transportation.gov. Accessed 2025-07-12},
}

@ARTICLE{2023arXiv231118715S,
       author = {{Suh}, S. W. and {Chung}, S. W. and {Bremer}, P.-T. and {Choi}, Y.},
        title = "{Accelerating Flow Simulations using Online Dynamic Mode Decomposition}",
      journal = {arXiv e-prints},
         year = 2023,
        month = nov,
       doi = {10.48550/arXiv.2311.18715},
}

@article{windfarms-2022,
author = {Liew, J. and Göçmen, T. and Lio, W. H. and Larsen, G. C.},
title = {Streaming dynamic mode decomposition for short-term forecasting in wind farms},
journal = {Wind Energy},
volume = {25},
number = {4},
pages = {719-734},
doi = {https://doi.org/10.1002/we.2694},
year = {2022}
}

@article{HUHN2023111852,
title = {Parametric dynamic mode decomposition for reduced order modeling},
journal = {Journal of Computational Physics},
volume = {475},
pages = {111852},
year = {2023},
issn = {0021-9991},
doi = {https://doi.org/10.1016/j.jcp.2022.111852},
url = {https://www.sciencedirect.com/science/article/pii/S0021999122009159},
author = {Q. A. Huhn and M. E. Tano and J. C. Ragusa and Y. Choi}
}

@ARTICLE{Matsumoto-fly-dmd-2017,
       author = {{Matsumoto}, D. and {Indinger}, T.},
        title = "{On-the-fly algorithm for Dynamic Mode Decomposition using Incremental Singular Value Decomposition and Total Least Squares}",
      journal = {arXiv e-prints},
         year = 2017,
        month = mar,
          eid = {arXiv:1703.11004},
        pages = {arXiv:1703.11004},
          doi = {10.48550/arXiv.1703.11004},
archivePrefix = {arXiv},
       eprint = {1703.11004},
 primaryClass = {physics.flu-dyn},
       adsurl = {https://ui.adsabs.harvard.edu/abs/2017arXiv170311004M}
}

@Article{Nedzhibov-SVDupdate-DMD-2023,
AUTHOR = {Nedzhibov, G.},
TITLE = {Extended Online DMD and Weighted Modifications for Streaming Data Analysis},
JOURNAL = {Computation},
VOLUME = {11},
YEAR = {2023},
NUMBER = {6},
ARTICLE-NUMBER = {114},
URL = {https://www.mdpi.com/2079-3197/11/6/114},
ISSN = {2079-3197},
DOI = {10.3390/computation11060114}
}

@Article{Chaugule-PIV-DMD-2023,
AUTHOR = {Chaugule, V. and Duddridge, A. and Sikroria, T. and Atkinson, C. and Soria, J.},
TITLE = {Investigating the Linear Dynamics of the Near-Field of a Turbulent High-Speed Jet Using Dual-Particle Image Velocimetry (PIV) and Dynamic Mode Decomposition (DMD)},
JOURNAL = {Fluids},
VOLUME = {8},
YEAR = {2023},
NUMBER = {2},
ARTICLE-NUMBER = {73},
URL = {https://www.mdpi.com/2311-5521/8/2/73},
ISSN = {2311-5521},
DOI = {10.3390/fluids8020073}
}

@Article{Ali-PSPD-DMD-2016,
AUTHOR = {Ali, M. Y. and Pandey, A. and Gregory, J. W.},
TITLE = {Dynamic Mode Decomposition of Fast Pressure Sensitive Paint Data},
JOURNAL = {Sensors},
VOLUME = {16},
YEAR = {2016},
NUMBER = {6},
ARTICLE-NUMBER = {862},
URL = {https://www.mdpi.com/1424-8220/16/6/862},
PubMedID = {27294939},
ISSN = {1424-8220},
DOI = {10.3390/s16060862}
}

@book{Kutz-SIAM-BOOK-2016,
	author = {J. N. Kutz and S. L. Brunton and B. W. Brunton and J. L. Proctor},
	title = {Dynamic Mode Decomposition},
	publisher = {Society for Industrial and Applied Mathematics},
	year = {2016},
	doi = {10.1137/1.9781611974508},
	address = {Philadelphia, PA},
	edition   = {},
	URL = {https://epubs.siam.org/doi/abs/10.1137/1.9781611974508},
}

@misc{DMD-book-supplement,
author = {J. N. Kutz and S. L. Brunton and B. W. Brunton and J. L. Proctor},
title = {Dynamic Mode Decomposition -- supplementary material},
howpublished = {\url{http://dmdbook.com/}},
}

@article{mezic_etal-covid_2024,
  author       = {Mezi\'{c}, I. and Drma\v{c}, Z. and \v{C}rnjari\'{c}, N. and Ma\'{c}e\v{s}i\'{c} S. and Fonoberova, M. and Mohr, R. and Avila, A. and Manojlovi\'{c}, I. and Andrej\v{c}uk, A.},
  title        = {A Koopman operator-based prediction algorithm and its application to COVID-19 pandemic and influenza cases},
  journal      = {Scientific Reports},
  volume       = {14},
  pages        = {5788},
  year         = {2024},
  doi          = {10.1038/s41598-024-55798-9},
  url          = {https://doi.org/10.1038/s41598-024-55798-9},
}

@article{ROWLEY_MEZIC_BAGHERI_SCHLATTER_HENNINGSON_2009, 
	title={Spectral analysis of nonlinear flows}, 
	volume={641}, 
	DOI={10.1017/S0022112009992059}, 
	journal={Journal of Fluid Mechanics}, 
	author={Rowley, C. W. and Mezi\'{c}, I. and Bagheri, S. and Schlatter, P. and Henningson, D. S.}, 
	year={2009}, 
	pages={115--127}}

@Article{Schmid2011,
	author="Schmid, P. J.
	and Li, L.
	and Juniper, M. P.
	and Pust, O.",
	title="Applications of the dynamic mode decomposition",
	journal="Theoretical and Computational Fluid Dynamics",
	year="2011",
	volume="25",
	number="1",
	pages="249--259",
	issn="1432-2250",
	doi="10.1007/s00162-010-0203-9",
	url="http://dx.doi.org/10.1007/s00162-010-0203-9"
}

@article{schmid-2011-dmd-exp-fluids,
	author = {P. J. Schmid},
	title = {Application of the dynamic mode decomposition to experimental data},
	journal = {Experiments in Fluids},
	volume = {50},
	number = {4},
	pages = {1123--1130},
	URL = {https://doi.org/10.1007/s00348-010-0911-3},
	doi = {10.1007/s00348-010-0911-3},
	year = {2011}
}

@incollection{SCHMID2021243,
	title = {Data--driven and operator--based tools for the analysis of turbulent flows},
	editor = {Paul Durbin},
	booktitle = {Advanced Approaches in Turbulence},
	publisher = {Elsevier},
	pages = {243--305},
	year = {2021},
	isbn = {978-0-12-820774-1},
	doi = {https://doi.org/10.1016/B978-0-12-820774-1.00012-4},
	url = {https://www.sciencedirect.com/science/article/pii/B9780128207741000124},
	author = {P. J. Schmid}
}

@article{schmid-2022-dmd-variants,
	author = {P. J. Schmid},
	title = {Dynamic Mode Decomposition and Its Variants},
	journal = {Annual Review of Fluid Mechanics},
	volume = {54},
	number = {1},
	pages = {225--254},
	year = {2022},
	doi = {10.1146/annurev-fluid-030121-015835},
	URL = {https://doi.org/10.1146/annurev-fluid-030121-015835},
	eprint = {https://doi.org/10.1146/annurev-fluid-030121-015835}
}

@article{mezic_annual_reviews-2013,
	Author = {I. Mezi{\'c}},
	Journal = {Annual Reviews of Fluid Mechanics},
	Pages = {357--378},
	Title = {Analysis of Fluid Flows via Spectral Properties of the {K}oopman Operator},
	Volume = {45},
	Year = {2013}}

@article{Jovanovic:2012wy,
	author = {Jovanovi{\'c}, M. R. and Schmid, P. J. and Nichols, J. W.},
	title = {{Low--rank and sparse dynamic mode decomposition}},
	journal = {Center for Turbulence Research, Annual Research Briefs},
	year = {2012},
	pages = {139--152}
}

@article{GIANNAKIS2020132211,
	title = {Extraction and prediction of coherent patterns in incompressible flows through space--time {Koopman} analysis},
	journal = {Physica D: Nonlinear Phenomena},
	volume = {402},
	pages = {132211},
	year = {2020},
	issn = {0167-2789},
	doi = {https://doi.org/10.1016/j.physd.2019.132211},
	url = {https://www.sciencedirect.com/science/article/pii/S0167278917303317},
	author = {D. Giannakis and S. Das}
}

@article{tu-rowley-dmd-theory-appl-2014,
	title = {On dynamic mode decomposition:  Theory and applications},
	journal = {Journal of Computational Dynamics},
	volume = {1},
	number = {2},
	pages = {391-421},
	year = {2014},
	author = {J. H. Tu and C. W. Rowley and D. M. Luchtenburg and S. L. Brunton and J. N. Kutz},
}

@incollection{Akshay2021,
	author="Akshay, S. and Soman, K. P. and Mohan, N. and Sachin Kumar, S.",
	editor="Kumar, Raman and Paiva, Sara",
	title="Dynamic Mode Decomposition and its application in various domains: An overview",
	booktitle="Applications in Ubiquitous Computing",
	year="2021",
	publisher="Springer International Publishing",
	address="Cham",
	pages="121--132",
	isbn="978-3-030-35280-6",
	doi="10.1007/978-3-030-35280-6_6",
	url="https://doi.org/10.1007/978-3-030-35280-6_6"
}

@article{brunton-budisic-kaiser-kutz-sirew-2022,
	author = {S. L. Brunton and M. Budi\v{s}i\'{c} and E. Kaiser and J. N. Kutz},
	title = {Modern {Koopman} Theory for Dynamical Systems},
	journal = {SIAM Review},
	volume = {64},
	number = {2},
	pages = {229-340},
	year = {2022},
	doi = {10.1137/21M1401243},
	URL = {https://doi.org/10.1137/21M1401243},
	eprint = {https://doi.org/10.1137/21M1401243}
}

@article{Mezic-Spectral-MOR-2005,
	author = {Mezi\'{c}, I.},
	year = {2005},
	pages = {309--325},
	title = {Spectral Properties of Dynamical Systems, Model Reduction and Decompositions},
	volume = {41},
	journal = {Nonlinear Dynamics},
	doi = {10.1007/s11071-005-2824-x}
}

@article{Mezic-Koop-Spectrum-2020,
	author = {Mezi\'{c}, I.},
	year = {2020},
	pages = {2091--2145},
	title = {Spectrum of the {Koopman} Operator, Spectral Expansions in Functional Spaces, and State--Space Geometry},
	volume = {30},
	number = {5},
	journal = {Nonlinear Science},
	doi = {10.1007/s00332-019-09598-5}
}

@article{Sayadi-etal-param-DMD-2015,
    author = {Sayadi, T. and Schmid, P. J. and Richecoeur, F. and Durox, D.},
    title = {Parametrized data-driven decomposition for bifurcation analysis, with application to thermo-acoustically unstable systems},
    journal = {Physics of Fluids},
    volume = {27},
    number = {3},
    pages = {037102},
    year = {2015},
    month = {03},
    issn = {1070-6631},
    doi = {10.1063/1.4913868},
}

@article{Gram-schmidt-numerics,
title = {Numerics of Gram-Schmidt orthogonalization},
journal = {Linear Algebra and its Applications},
volume = {197-198},
pages = {297-316},
year = {1994},
issn = {0024-3795},
doi = {https://doi.org/10.1016/0024-3795(94)90493-6},
author = {{\AA}. Bj{\"o}rck}
}

@dataset{chung_wai_tong_2023_8034232,
  author       = {Chung, W. T. and
                  Ihme, M. and
                  Jung, K. S. and
                  Chen, J. H. and
                  Guo, J. and
                  Brouzet, D. and
                  Talei, M. and
                  Jiang, B. and
                  Savard, B. and
                  Poludnenko, A. Y. and
                  Akoush, B. and
                  Sharma, P. and
                  Tamkin, A.},
  title        = {BLASTNet Simulation Dataset},
  month        = jun,
  year         = 2023,
  note         = {URL: https://blastnet.github.io/},
  publisher    = {Zenodo},
  version      = {2.0},
  doi          = {10.5281/zenodo.8034232},
  url          = {https://doi.org/10.5281/zenodo.8034232}
}

@article{wynn_pearson_ganapathisubramani_goulart_2013, 
	title={Optimal mode decomposition for unsteady flows}, 
	volume={733},
	 DOI={10.1017/jfm.2013.426}, 
	 journal={Journal of Fluid Mechanics}, 
	 publisher={Cambridge University Press}, 
	 author={Wynn, A. and Pearson, D. S. and Ganapathisubramani, B. and Goulart, P. J.}, 
	 year={2013}, 
	 pages={473--503}}

@Article{Dawson2016,
	author="Dawson, S. T. M.
	and Hemati, M. S.
	and Williams, M. O.
	and Rowley, C. W.",
	title="Characterizing and correcting for the effect of sensor noise in the dynamic mode decomposition",
	journal="Experiments in Fluids",
	year="2016",
	volume="57",
	number="3",
	pages="42",
	issn="1432-1114",
	doi="10.1007/s00348-016-2127-7",
	url="http://dx.doi.org/10.1007/s00348-016-2127-7"
}

@ARTICLE{Colbrook-Drmac-Horning-2025,
       author = {{Colbrook}, M. J. and {Drma{\v{c}}}, Z. and {Horning}, A.},
        title = "{An Introductory Guide to Koopman Learning}",
      journal = {arXiv e-prints},
         year = 2025,
        month = oct,
          eid = {arXiv:2510.22002},
        pages = {arXiv:2510.22002},
          doi = {10.48550/arXiv.2510.22002},
archivePrefix = {arXiv},
       eprint = {2510.22002},
 primaryClass = {cs.NA},
       adsurl = {https://ui.adsabs.harvard.edu/abs/2025arXiv251022002C}
}

@article{Yang-Zhang-strDMD-2022,
author = {Yang, R. and Zhang, X. and Reiter, P. and Lohse, D. and Shishkina, O. and Linkmann, M.},
title = {Data-driven identification of the spatiotemporal structure of turbulent flows by streaming dynamic mode decomposition},
journal = {GAMM-Mitteilungen},
volume = {45},
number = {1},
pages = {e202200003},
doi = {https://doi.org/10.1002/gamm.202200003},
year = {2022}
}

@misc{ma2025notestabilityshermanmorrisonwoodburyformula,
      title={A Note on the Stability of the {Sherman-Morrison-Woodbury} Formula}, 
      author={L. Ma and C. Boutsikas and M. Ghadiri and P. Drineas},
      year={2025},
      eprint={2504.04554},
      archivePrefix={arXiv},
      primaryClass={math.NA},
      url={https://arxiv.org/abs/2504.04554}, 
}

@misc{chuacircuits_matlab,
  author       = {{ChuaCircuits.com}},
  title        = {MATLAB Simulation of Chua's Circuit},
  howpublished = {\url{https://chuacircuits.com/matlab/}},
  year         = {2025},
  note         = {Accessed: 2026-03-09}
}

@ARTICLE{Chua-1980,
  author={Chua, L.},
  journal={IEEE Transactions on Circuits and Systems}, 
  title={Dynamic nonlinear networks: State-of-the-art}, 
  year={1980},
  volume={27},
  number={11},
  pages={1059-1087},
  doi={10.1109/TCS.1980.1084745}}

@ARTICLE{Matsumoto-1984,
  author={Matsumoto, T.},
  journal={IEEE Transactions on Circuits and Systems}, 
  title={A chaotic attractor from Chua's circuit}, 
  year={1984},
  volume={31},
  number={12},
  pages={1055-1058},
  doi={10.1109/TCS.1984.1085459}}

@article{meyer_update_pinv,
author = {Meyer, Jr., C. D.},
title = {Generalized Inversion of Modified Matrices},
journal = {SIAM Journal on Applied Mathematics},
volume = {24},
number = {3},
pages = {315-323},
year = {1973},
doi = {10.1137/0124033},

URL = { 
        https://doi.org/10.1137/0124033
},
eprint = {    
        https://doi.org/10.1137/0124033
}
}
